\title{The spectral gap of random regular graphs}
\author{
Amir Sarid
\thanks{School of Mathematical Sciences, Raymond and Beverly Sackler Faculty of Exact Sciences, Tel Aviv University, Tel Aviv 6997801, Israel.
Email: {\tt amir@sarid.io}.}
}
\date{January 2022}
\numberwithin{equation}{section}
\numberwithin{figure}{section}
\theoremstyle{definition}
\newtheorem{theorem}{Theorem}[section]
\newtheorem{lemma}[theorem]{Lemma}
\newtheorem{claim}[theorem]{Claim}
\newtheorem{corollary}[theorem]{Corollary}
\newtheorem{proposition}[theorem]{Proposition}
\newtheorem{observation}[theorem]{Observation}
\DeclareMathOperator{\diam}{diam}
\DeclareMathOperator{\Tr}{Tr}
\DeclareMathOperator{\Var}{Var}
\DeclareMathOperator{\Cov}{Cov}
\begin{document}
\maketitle

\begin{abstract}
    We bound the second eigenvalue of random $d$-regular graphs, for a wide range of degrees $d$, using a novel approach based on Fourier analysis. Let $G_{n, d}$ be a uniform random $d$-regular graph on $n$ vertices, and $\lambda (G_{n, d})$ be its second largest eigenvalue by absolute value. For some constant $c > 0$ and any degree $d$ with $\log^{10} n \ll d \leq c n$, we show that $\lambda (G_{n, d}) = (2 + o(1)) \sqrt{d (n - d) / n}$ asymptotically almost surely. Combined with earlier results that cover the case of sparse random graphs, this fully determines the asymptotic value of $\lambda (G_{n, d})$ for all $d \leq c n$. To achieve this, we introduce new methods that use mechanisms from discrete Fourier analysis, and combine them with existing tools and estimates on $d$-regular random graphs --- especially those of Liebenau and Wormald \cite{liebenauwormald2017}.
\end{abstract}

\section{Introduction}

We consider the set of $d$-regular graphs on $n$ vertices, with $d = d(n)$ that may depend on $n$. It is well known that this set is nonempty if and only if $0 \leq d \leq n - 1$ and $nd$ is even. If that is indeed the case, we may ask about properties that most of the graphs in this set will have. In other words, if we define a random $d$-regular graph on $n$ vertices $G_{n,d}$ as a uniformly-chosen element of this set, we may inquire about the typical behavior of this $G_{n,d}$.

The typical behavior of the eigenvalues of $G_{n, d}$ is of particular interest. Define the adjacency matrix $A = (a_{ij})$ of a graph $G$ so that $a_{ij} = \mathds{1}_{\{i, j\} \in E(G)}$. We order the eigenvalues of $A$ so that $\lambda_1 \geq \lambda_2 \geq \dots \geq \lambda_n$. Since $G_{n,d}$ is always $d$-regular, all eigenvalues will be between $-d$ and $d$, with $\lambda_1 = d$.

The other eigenvalues may vary, and $\lambda_2, \lambda_n$ are especially important among them. The seminal work of Alon and Chung \cite{alonchung1988} shows a connection between the value of $\lambda = \max (|\lambda_2|, |\lambda_n|)$ and the edge distribution of the graph. We may ask, then --- what are the typical eigenvalues of the adjacency matrix of $G_{n,d}$? How are they distributed? What is the typical value of $\lambda$?

A lower bound on $\lambda$ in terms of $d$ and the diameter of the graph can be found by applying the Alon-Boppana Theorem (see \cite{alon1986, nilli1991}):
\begin{equation*}
    \lambda_2 \geq 2 \sqrt{d - 1} - \frac {2 \sqrt{d - 1} - 1} {\lfloor \diam(G) / 2 \rfloor } .
\end{equation*}
Note that $\diam (G) \geq \log_{d-1} n$, so it tends to infinity for $d$ sub-polynomial in $n$. Therefore, the theorem implies that $\lambda \geq (2 - o(1)) \sqrt{d - 1}$ for such $d$.

Also, note that the $d > n / 2$ case can be reduced to that of $d \leq n / 2$ by observing that $G_{n,d}$ and the complement of $G_{n, n-1-d}$ are identically distributed. It can be proven that whenever $\lambda_i \neq \lambda_1$ is an eigenvalue of $G_{n,d}$, $-1 - \lambda_i$ is an eigenvalue of its complement. The limiting distributions for $d$ and $n-d-1$ are therefore closely related. This explains why we may restrict our attention to $d \leq n / 2$, and we will freely assume this to be the case from now on.

Now, when considering the eigenvalues of $G_{n, d}$, we may hope for those eigenvalues to be distributed similarly to the better understood case of the \textit{binomial random graph} $G_{n,p}$, with each edge appearing with probability $p$ independently of all others. The $G_{n, p}$ analogy suggests that when $d \leq n / 2$ tends to infinity with $n$, the eigenvalues will be distributed according to Wigner's semicircle law \cite{wigner1958} --- and this was indeed proven by Tran and Vu \cite{tranvuwang2013}. That is, when scaling the eigenvalues by $(2 \sqrt{d (n - d) / n}) ^ {-1}$, their distributions will converge to a distribution with probability density function
\begin{equation*}
    f(x) = \left\{\begin{array}{lr}
        \frac {2} {\pi} \sqrt{1 - x^2}, & \text{if } |x| < 1\\
        0, & \text{otherwise}
        \end{array}\right. .
\end{equation*}

When $d$ is fixed, however, the limiting distribution is quite different, and its probability density function is given by McKay in \cite{mckay1981}:
\begin{equation*}
    f_d(x) = \left\{\begin{array}{lr}
        \frac {d \sqrt{4 (d-1) - x^2}} {2 \pi (d^2 - x^2)}, & \text{if } |x| < 2 \sqrt{d - 1}\\
        0, & \text{otherwise}
        \end{array}\right. .
\end{equation*}
Note that for large $d$, the $d^2 - x^2 \approx d^2$ for all $|x| \leq 2 \sqrt{d - 1}$. Therefore, the resulting $f_d(x)$ will be close to an appropriately scaled semicircle. Combined, those two results cover the entire range of possible values of $d$.

Note that since this is merely the limiting distribution, it only gives a one-sided bound on $\lambda$. Its absolute value cannot be too small, as that would imply the same for $\lambda_2, \lambda_3, \ldots, \lambda_n$, resulting in a significant deviation from the limiting distribution. It may still be possible, however, that $\lambda$ is often larger than $(2 + \varepsilon) \sqrt{d - 1}$ for some $\varepsilon > 0$. Indeed, the asymptotic magnitude of $\lambda$ is still not fully understood for all $d$.

A conjecture of Alon \cite{alon1986}, stating that for $d \geq 3$ fixed, $\lambda_2 \leq (2 + o(1)) \sqrt{d - 1}$ a.a.s., was proven by Friedman \cite{friedman2008} (see also \cite{bordenave2020, puder2015} for different approaches that achieve similar results), following pioneering work by Friedman, Kahn and Szemerédi \cite{friedman1989}, and by Broder, Frieze, Suen and Upfal \cite{bfsu1999}. The case of $d \leq n / 2$ that tends to infinity with $n$ was conjectured by Vu in \cite{vu2008} (also in \cite{vu2014}, Conjectures 7.3 and 7.4) to have
\begin{equation*}
    \lambda = (2 + o(1)) \sqrt{d (n - d) / n}
\end{equation*}
a.a.s.

The fact that $\lambda = \mathcal{O} (\sqrt{d})$ a.a.s.\ has been proven by Broder, Frieze, Suen and Upfal \cite{bfsu1999} for $d = o(\sqrt{n})$, and this result was extended for $d = \mathcal{O}(n ^ {2/3})$ by Cook, Goldstein and Johnson \cite{cook2018}. Recently, it was shown by Bauerschmidt, Huang, Knowles and Yau \cite{bhky2020} that for $d = o(n ^ {2/3})$ it indeed holds that $\lambda = (2 + o(1)) \sqrt{d}$ a.a.s.

In the $d = \Theta (n)$ case, Krivelevich, Sudakov and Vu \cite{ksv2007} show that $\lambda = \mathcal{O} (d ^ {7 / 8} \log ^ {1 / 8} d)$ a.a.s. Some bounds may also be inferred from concentration bounds --- for example, it follows from Alon, Krivelevich and Vu \cite{akv2002} that for $d = \Theta(n)$ the graph $G_{n, p}$ with $p = d /n$ is much more likely to be $d$-regular than to have $\lambda$ asymptotically larger than $\sqrt{d \log d}$. Therefore, $\lambda = O(\sqrt{d \log d})$. Finally, Tikhomirov and Youssef \cite{tikhomirovyoussef2019} prove that $\lambda = \mathcal{O}(\sqrt{d})$ a.a.s.\ for all $d \leq n / 2$.

\subsection{Our results}
\label{our_results_section}

Throughout the paper, $d = d(n)$ is chosen so that $d \leq n / 2$ and $nd$ is even. Also, $G_{n,d}$ is a random $d$-regular graph, $A$ is its adjacency matrix, $\lambda_1 = d \geq \lambda_2 \geq \dots \geq \lambda_n$ are the eigenvalues, and $\lambda = \max( |\lambda_2|, |\lambda_n| )$.

We introduce the notation $\mathbb{Z}^{n}_{\geq 0}$ for $n$-tuples of non-negative numbers, $p = d / (n-1)$ is the probability that a given edge will be in $G_{n,d}$, $[t] = \{1, 2, \ldots, t\}$, and $\binom {S} {t} = \{T \subseteq S : |T| = t\}$. We also use the notations $f(n) \ll g(n)$ and $g(n) \gg f(n)$, both of which are equivalent to $f(n) = o(g(n))$.

Our main result is:

\begin{theorem}
\label{main_theorem}

There exists a constant $c > 0$ such that for $1 \ll d \leq c n$, $G_{n,d}$ satisfies
\begin{equation*}
    \lambda = (2 + o(1)) \sqrt{ d (n - d) / n } = (2 + o(1)) \sqrt{n p (1 - p)}
\end{equation*}
a.a.s.

\end{theorem}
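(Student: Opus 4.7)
I plan to establish the upper bound $\lambda \leq (2 + o(1)) \sqrt{d(n-d)/n}$ a.a.s.\ by the high-moment (trace) method, noting that the matching lower bound is essentially free: Alon-Boppana gives it in the sparse regime $d = o(\sqrt n)$, and the Tran-Vu-Wang semicircle law gives it in the dense regime, since convergence of the empirical spectral distribution to the semicircle (after scaling by $(2 \sqrt{d(n-d)/n})^{-1}$) forces $\lambda \geq (2 - o(1)) \sqrt{d(n-d)/n}$. Concretely, I aim to prove
\[
    \mathbb{E}\bigl[\Tr(A^{2k})\bigr] \leq (1 + o(1)) \cdot n \cdot \bigl( 2 \sqrt{d(n-d)/n} \bigr)^{2k}
\]
for a slowly growing $k = k(n) \to \infty$; combining this with $\lambda^{2k} \leq \Tr(A^{2k}) - d^{2k}$ and Markov's inequality yields the desired a.a.s.\ bound.

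The obstacle is that edges of $G_{n,d}$ are correlated, so the classical walk-counting route that works for $G_{n,p}$ does not apply directly. To bridge the two models, I will use discrete Fourier inversion on $\mathbb{Z}^n$ to encode the $d$-regularity constraint: letting $a_{ij}$ be the edge indicators of $G_{n,p}$ with $p = d/(n-1)$ and $d_j = \sum_{i \neq j} a_{ij}$ the degrees, one has
\[
    \mathds{1}[d_1 = \dots = d_n = d] = \int_{[0,1]^n} e^{-2\pi i d \sum_j \theta_j} \prod_{\{i,j\} \in \binom{[n]}{2}} e^{2\pi i (\theta_i + \theta_j) a_{ij}} \, d\theta .
\]
Inserting this into $\mathbb{E}[\Tr(A^{2k})] = \mathbb{E}_{G_{n,p}}[\Tr(A^{2k}) \, \mathds{1}_{\text{reg}}] / \mathbb{P}_{G_{n,p}}[\text{reg}]$, expanding the trace as a sum over closed walks, and using edge-independence in $G_{n,p}$, each walk contributes a Fourier integral whose integrand factors over edges as $p \, e^{2\pi i (\theta_i + \theta_j)}$ for the walk edges and $1 - p + p \, e^{2\pi i (\theta_i + \theta_j)}$ for the remaining $\binom{n}{2} - O(k)$ pairs. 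The normalization $\mathbb{P}_{G_{n,p}}[\text{reg}]$ can be pinned down using the precise enumeration of Liebenau and Wormald \cite{liebenauwormald2017}.

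The technical core is an oscillatory-integral estimate. Near the origin, a Gaussian saddle-point expansion of the non-walk product recovers the $G_{n,p}$ trace asymptotic, with only a rank-one correction reflecting that the top eigenvector is pinned; this produces the desired main term $n \bigl(2 \sqrt{d(n-d)/n}\bigr)^{2k}$. The hard part is the off-origin regime: I must show that outside a small window around $0$, the integrand decays fast enough that its total mass is dominated by the main term even after multiplication by the $(nd)^{O(k)}$ closed walks. The per-edge character factor has modulus at most $1 - c\, p(1-p) \|\theta_i + \theta_j\|^2$, so beating the combinatorial blow-up requires an anti-concentration / quasi-equidistribution statement: whenever at least one $\theta_j$ is bounded away from $0$, the pair-sums $\{\theta_i + \theta_j\}$ must be spread out enough on the circle that the product over the $\Theta(n^2)$ non-walk edges decays exponentially in $n$. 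I expect to prove this by a case analysis on the number of ``heavy'' coordinates $\theta_j$, mirroring the local central limit theorem arguments for degree sequences but made uniform over a range of degrees.

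The restriction $d \gg \log^{10} n$ should enter precisely here: the per-edge Fourier decay scale is $\sqrt{p(1-p)} \sim \sqrt{d/n}$, and to convert the $(1 + o(1))$-factor moment bound into an a.a.s.\ spectral bound with the sharp constant $2$ by Markov's inequality, $k$ must be taken super-logarithmically in $n$, which in turn forces $d$ to dominate polylogarithmic scales in order for the off-origin estimate to survive. I expect the optimal $k$ to be a small power of $\log n$, following the Sinai-Soshnikov template, with the combinatorial walk-counting borrowed essentially unchanged from the standard semicircle analysis of $G_{n,p}$ and the $G_{n,d}$-specific work concentrated in the Fourier estimate described above.
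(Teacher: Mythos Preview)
Your proposal has a fundamental gap at the very first step. The displayed inequality
\[
    \mathds{E}\bigl[\Tr(A^{2k})\bigr] \leq (1 + o(1)) \cdot n \cdot \bigl( 2 \sqrt{d(n-d)/n} \bigr)^{2k}
\]
is simply false in the range of $k$ you need. Since $\lambda_1 = d$ deterministically, $\Tr(A^{2k}) \geq d^{2k}$, and for $k \gg \log n$ and $d \to \infty$ one has $d^{2k} / \bigl(n \cdot (4 d(n-d)/n)^k\bigr) \asymp (d/4)^k / n \to \infty$. You note that one must subtract $d^{2k}$, but that forces you to compute $\mathds{E}[\Tr(A^{2k})]$ to additive accuracy of order $n (4d(n-d)/n)^k$, i.e.\ to relative accuracy roughly $(4/d)^k$ against a main term of size $d^{2k}$ --- a super-polynomially small error. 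Your saddle-point sketch does not approach this precision, and no ``rank-one correction'' in the Gaussian window will produce such cancellation: the uncentered walk expansion has only nonnegative summands, so the $G_{n,p}$ trace asymptotic you say you recover is itself dominated by $(np)^{2k} \sim d^{2k}$, not by $n(2\sqrt{np(1-p)})^{2k}$. The paper sidesteps the whole issue by applying the trace method to the centered matrix $A - pJ + pI$, whose spectrum is $0, \lambda_2 + p, \ldots, \lambda_n + p$; then $\mathds{E}\Tr[(A - pJ + pI)^k]$ genuinely has the target magnitude, and the walk expansion involves the signed products $\prod_{e \in \Gamma}(\mathds{1}_{e \in G} - p)$, where near-independence of edges \emph{does} create the needed cancellation.

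Even with that correction, the analytic engine you propose is unrelated to the paper's. Your ``Fourier analysis'' is the characteristic-function integral over $[0,1]^n$ encoding the degree constraint, followed by a saddle-point/oscillatory-integral argument. The paper's Fourier analysis is Boolean Fourier on $\{0,1\}^{i-1}$, applied to the conditional probabilities $p_i(x_1,\ldots,x_{i-1}) = \mathds{P}[e_i \in G \mid \text{status of }e_1,\ldots,e_{i-1}]$: each centered walk contribution is expanded (\Cref{chi_expansion}) as a sum over tuples $(S_1,\ldots,S_t)$ of products of coefficients $\chi_{S_i}(p_i)$, and these coefficients are bounded by pushing the Liebenau--Wormald recursion for $P_{\mathbf{d},\mathcal{A}}(ab)$ through product and inverse rules for Boolean Fourier (\Cref{prod_bound}, \Cref{inv_bound}, \Cref{recursive_chi_estimate}). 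Liebenau--Wormald enters through its recursive edge-probability formulae, not merely as a normalization constant $\mathds{P}_{G_{n,p}}[\text{reg}]$. If you insist on the torus-integral route after centering, the hard step --- showing the off-origin contribution to each $\mathds{E}\prod(\mathds{1}_e - p)$ is small enough to survive summation over walks with the sharp constant $2$ --- would be an essentially new estimate, and nothing in your sketch suggests how to obtain it to that accuracy.
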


Small values of $d$ are dealt with in \cite{bhky2020}. In this paper we will only prove this for $d \gg \log ^ {10} n$, in which case we can employ the \textit{high trace method}:

Let $J$ be the all-ones matrix. It can be shown that $A - p J + p I$ has the same spectrum as $A$, except that $\lambda_1 = d$ has been replaced by $0$ and $\lambda_i$ has been replaced by $\lambda_i + p$ for $i > 1$. Thus, for any even integer $k > 0$, $(A - p J + p I)^k$ has eigenvalues $0, (\lambda_2 + p) ^ k, (\lambda_3 + p) ^ k, \ldots, (\lambda_n + p)^k$. Therefore, bounding $\Tr \left[ (A - p J + p I)^k \right] = (\lambda_2 + p)^k + (\lambda_3 + p)^k + \cdots + (\lambda_n + p)^k$ can be used to obtain an upper bound on $\lambda$.

Our plan is to directly apply the high trace method --- as used, for example, in \cite{furedikomlos1981} --- in order to bound the second eigenvalue of $d$-regular random graphs a.a.s. That is, we wish to bound $\mathds{E} \Tr \left [ (A - p J + p I)^k \right]$, where $k = k(n) \gg \log n$ is an even-valued function to be chosen by us. This turns out to require quite precise estimates for the probabilities of certain events on $G_{n, d}$.

Since $k$ is even, an upper bound $\mathds{E} \Tr \left[ (A - p J + p I)^k \right] \leq B^k$, for any $B = B(n)$, implies $\mathds{E} (\lambda + p) ^ k \leq B^k$. Applying the $k^{th}$ moment argument we see that $\lambda + p \leq (1+c) B$ a.a.s.\ for any $c > 0$, since $(1+c)^k$ grows to infinity with $n$. Therefore $\lambda + p \leq (1 + o(1)) B$ holds a.a.s., and for a suitable choice of $B$ this would prove \Cref{main_theorem}. It thus suffices to prove

\begin{theorem}
\label{actual_main_result}
There exists a constant $c > 0$ such that for any $\log n \ll k \ll d^{1 / 10}$, and any $d \leq c n$ with $n d$ even,
\begin{equation*}
    \mathds{E} \Tr \left [ (A - p J + p I)^k \right] \leq P(n) \left[ 4n p (1 -p) \right]^{k / 2}
\end{equation*}
for some polynomial term $P(n)$.
\end{theorem}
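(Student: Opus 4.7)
The plan is to apply the high trace method directly, expanding $\Tr(M^k)$ with $M := A - pJ + pI$ as a sum over closed walks, and then bounding each walk's contribution. Since $a_{ii} = 0$, we have $M_{ii} = 0$ and $M_{uv} = a_{uv} - p$ for $u \neq v$, so
\[
\mathds{E} \Tr(M^k) = \sum_W \mathds{E} \prod_{i=0}^{k-1} (a_{v_i v_{i+1}} - p),
\]
the sum ranging over closed walks $W = v_0 v_1 \cdots v_{k-1} v_0$ in $[n]$ with $v_i \neq v_{i+1}$ for all $i$. Each walk has an underlying simple graph $H_W$ on $t = |V(W)|$ vertices, with edge multiplicities $(m_e)_{e \in H_W}$ summing to $k$.

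The first step is a Füredi--Komlós style classification of walks by shape. The dominant contribution comes from \emph{tree walks}, in which $H_W$ is a tree on $t = k/2 + 1$ vertices with each edge traversed exactly twice; such walks correspond bijectively to pairs (Dyck path, labeling), and their total number is $C_{k/2} \cdot n(n-1) \cdots (n - k/2)$, where $C_{k/2} \leq 4^{k/2}/(k/2+1)$ is the Catalan number. Non-tree walks are parametrized by their ``excess'': either extra distinct edges beyond a spanning tree, or extra traversals beyond two per edge. A standard count shows that each unit of excess costs at least a factor of $n / k^{O(1)}$ in the walk count, so (for $k \ll d^{1/10}$) the sum of non-tree walks forms a geometrically decaying series after the expected moment is incorporated.

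The second and central step is to bound $\mathds{E}_{G_{n,d}} \prod_e (a_e - p)^{m_e}$ for each shape. In $G_{n,p}$ this factorizes by independence into $\prod_e \mathds{E}(a_e - p)^{m_e}$, with $\mathds{E}(a_e - p)^2 = p(1-p)$ and $|\mathds{E}(a_e - p)^{m_e}| \leq p(1-p)$ for all $m_e \geq 2$; in particular, every tree walk contributes exactly $[p(1-p)]^{k/2}$. For $G_{n,d}$, the edges are weakly correlated: expanding each factor via $(a_e - p)^{m_e} = \sum_{j} \binom{m_e}{j} a_e^j (-p)^{m_e - j}$ reduces the task to controlling $\Pr(E \subseteq G_{n,d})$ for subsets $E$ of the edges of $H_W$, which is the kind of estimate supplied by the Liebenau--Wormald enumeration theorems. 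The novel Fourier-analytic step should then express the ratio $\Pr(E \subseteq G_{n,d}) / \Pr(E \subseteq G_{n,p})$ as a character-sum over the hyperplane encoding the degree constraint, packaging the corrections so they can be summed coherently across all shapes, and cancelling the polynomial-in-$k$ losses that a shape-by-shape application of Liebenau--Wormald would incur.

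Combining both steps: tree walks contribute at most $(1 + o(1)) \cdot n \cdot [4np(1-p)]^{k/2}$, matching the claimed bound, while the non-tree walks contribute terms absorbed into the polynomial factor $P(n)$. I expect the principal obstacle to be the second step: obtaining near-sharp moments on tree walks and simultaneously enough decay on non-tree shapes to sum over all of them. A naive use of the Liebenau--Wormald formulas loses a multiplicative error per edge, which compounds to a $k^{\Theta(k)}$ factor that overwhelms any polynomial prefactor. The Fourier machinery is precisely what is needed to harvest cancellations between these per-edge corrections and reduce the total loss to a polynomial in $n$, which is the role of the novelty in the abstract.
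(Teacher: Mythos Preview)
Your overall architecture is right: expand the trace over closed non-lazy walks, bound each walk's expected contribution $M_\Gamma$, and then count walks by type using the F\"uredi--Koml\'os/Vu codeword argument. You also correctly flag the central difficulty, namely that a naive application of Liebenau--Wormald to $\mathds{P}[E\subseteq G_{n,d}]$ accrues a multiplicative error per edge that cannot be summed. But your description of the Fourier step is off in two concrete ways, and the gap matters.

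First, the relevant Fourier analysis is not a character sum over the degree hyperplane, and it is not used to sum coherently across shapes. It is the Boolean (O'Donnell-style) Fourier expansion on the cube $\{0,1\}^{i-1}$, applied for a \emph{fixed} walk $\Gamma=(e_1,\dots,e_t)$ to the conditional probabilities $p_i(x_1,\dots,x_{i-1})=\mathds{P}[e_i\in G\mid \mathds{1}_{e_j\in G}=x_j,\ j<i]$. One writes $M_\Gamma$ exactly as a sum over tuples $(S_1,\dots,S_t)$ with $S_i\subseteq[i-1]$ of products of the coefficients $\chi_{S_i}(p_i)$ (Proposition~\ref{chi_expansion}). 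The Liebenau--Wormald recursion is then used, not as a one-shot estimate for $\mathds{P}[E\subseteq G_{n,d}]$, but iteratively, propagating bounds on $\chi_S$ through the operators $\mathcal{P},\mathcal{Y}$ to obtain $|\chi_S(p_i)|\lesssim (d/n)(24d)^{-|S|}(4|S|)!$, with a sharper bound $|\chi_{\{j\}}(p_i)|\ll 1/n$ when $e_i,e_j$ are non-incident. This yields a per-walk bound; nothing is summed across shapes at this stage.

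Second, the operative dichotomy is not tree versus non-tree but \emph{repeating versus non-repeating} edges within a single walk. If every distinct edge of $\Gamma$ repeats at least twice, the straightforward estimate from Corollary~\ref{joint_probability_estimate} already gives $|M_\Gamma|\le(1+o(1))[p(1-p)]^{t}$ and no Fourier machinery is needed. The hard case is when $\Gamma$ has $t_1$ edges of multiplicity one. There the Fourier expansion plus a pairing argument (each non-repeating $e_i$ gets paired with the $e_j$ for which $S_j=\{i\}$, the dominant pairs being incident and contributing $p(1-p)/n$ each) yields $|M_\Gamma|\lesssim k^{2m}[p(1-p)/n]^{t_1/2}$, with $m$ the number of returns to previously discovered vertices. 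The extra factor $n^{-t_1/2}$ is exactly what offsets the $n^{t_1/2}$ surplus in the walk count coming from those non-repeating edges; your proposal does not surface this quantitative balance, and without it the ``geometrically decaying series'' claim for non-tree walks does not close.
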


It should be noted that:
\begin{enumerate}
\item We assume $d \gg \log^{10} n$, so the range $\log n \ll k \ll d^{1 / 10}$ of possible $k$ values is nonempty. As explained above, smaller values of $d$ have already been dealt with elsewhere.
\item Since $k \gg \log n$, the $P(n)$ term in the bound will be negligible. Indeed, $(P(n)) ^ {1/k} = n^{o(1 / \log n)} = 1 + o(1)$.
\item Bounding $\lambda + p = (2 + o(1)) \sqrt{n p (1 - p)}$ is the same as bounding $\lambda = (2 + o(1)) \sqrt{n p (1 - p)}$, since $p \ll 2 \sqrt{n p (1 - p)}$ for $d \leq n / 2$.
\item This only establishes an upper bound on $\lambda $. This suffices, since the matching lower bound is a consequence of the limiting distribution of eigenvalues, as discussed above.
\end{enumerate}

Now, 
\begin{equation*}
    \left[ (A - p J + p I) ^ k \right]_{ij} = \sum\limits_{\Gamma} \prod\limits_{e \in \Gamma} (\mathds{1}_{e \in G} - p) ,
\end{equation*}
where the sum is taken over all non-lazy walks $\Gamma$ of length $k$ in the complete graph $K_n$ on $n$ vertices, going from $i$ to $j$. Since $\mathds{E} \mathds{1}_{e \in G} = p$ for any edge $e$, we may hope that such products will have small expected value. This requires us to obtain very precise estimates on $\mathds{P} [ S \subseteq G_{n,d} ]$ for various subsets $S \subseteq \Gamma$ --- at least as precise as we hope our eventual bound on the product would be.

Two obstacles may prevent this from always being the case:

First, this will not hold if the various edges are ``too correlated" with one another. Since $\mathds{E} \left[ (\mathds{1}_{e_1 \in G} - p) (\mathds{1}_{e_2 \in G} - p) \right] = \Cov (\mathds{1}_{e_1 \in G}, \mathds{1}_{e_2 \in G})$, if $e_1 \in G$ is highly correlated with $e_2 \in G$, we may get a large expected value. Although this is perhaps most obvious for a product of two terms, it indicates a problem that may arise with larger products as well. The second obstacle is that some edges may appear more than once in the product. For example, $\mathds{E} \left[ (\mathds{1}_{e \in G} - p)^2 \right] = \Var [ \mathds{1}_{e \in G} ] = p (1 - p)$. This may be viewed as a special case of the first obstacle, since $\mathds{1}_{e \in G}$ is trivially highly correlated with itself.

The standard way to deal with the first issue is to obtain some bound on those correlations, which would eventually yield accurate estimates for $\mathds{P} [ S \subseteq G_{n,d} ]$ for various subsets $S \subseteq \Gamma$. Those estimates must have very small margins of error, since they must eventually be translated into a bound on
\begin{equation*}
    \left| \prod\limits_{e \in \Gamma} (\mathds{1}_{e \in G} - p) \right|
\end{equation*}
for closed walks $\Gamma$, a bound whose order of magnitude will be $[p (1 - p)] ^ {k / 2}$ in most cases.

The second issue is often handled by separating the product into repeating edges and non-repeating edges. For example, in \cite{alonnussbaum2008}, the correlations issue is subverted by assuming the edges to be $k$-wise independent, which causes the expected value of the product to be $0$ unless all edges are repeating. The case of closed walks with all edges repeating, however, still requires careful treatment, which is done using precise estimates on the number of such walks.

The structure of this paper is as follows:

\Cref{prelim} of this paper lays the groundwork for later sections: some combinatorial bounds, facts from Fourier analysis, and claims about $d$-regular graphs are established in this section. In that section, we also obtain some preliminary bounds on the probability of small subsets of edges to all appear (or to not appear) simultaneously in $G_{n, d}$.

\Cref{section_contribution} is devoted to using those bounds for proving the fact that small enough subsets of the variables $\mathds{1}_{e \in G}$ are generally not strongly correlated, so that the first obstacle listed above will be less of an issue. This leads to a highly accurate estimate on $\mathds{P} [ S \subseteq G_{n,d} ]$ for various subsets $S \subseteq \Gamma$, and eventually to a bound on the expected products for each individual walk $\Gamma$. The resulting bound depends on the number of edges appearing more than once in $\Gamma$ and the number of distinct vertices through which it passes.

Finally, \Cref{section_enumeration} bounds the number of walks $\Gamma$ of each type. This allows us to sum the previously obtained bound over all possible closed walks $\Gamma$, to obtain a bound on $\mathds{E} \Tr \left[(A - p J + p I)^k \right]$, and to prove \Cref{actual_main_result}. \Cref{main_theorem} then follows, as explained above.

\section{Preliminaries}
\label{prelim}

\subsection{Combinatorial Bounds}

This subsection deduces bounds on certain combinatorial sums, in order to be readily used when needed.

\begin{lemma}
\label{fourier_product_lemma_1}
For some large enough constant $C > 0$ and some small enough constant $c > 0$, and for any $0 \leq m \leq n$ and $a \leq c n^{-5}$,
\begin{equation*}
    \sum\limits_{k=0}^{n} \binom {n} {k} (4(m+k))! a^{m+k} \leq C (4m)! a^m .
\end{equation*}

\end{lemma}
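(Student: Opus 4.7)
The plan is to divide through by $(4m)! \, a^m$ and show that the resulting sum $\sum_{k=0}^{n} \binom{n}{k} \frac{(4(m+k))!}{(4m)!} a^k$ is bounded by an absolute constant. Each factor in the summand can be estimated crudely: the binomial coefficient by $\binom{n}{k} \leq n^k$, and the ratio of factorials by $\frac{(4(m+k))!}{(4m)!} = \prod_{i=1}^{4k} (4m + i) \leq (4m + 4k)^{4k} \leq (8n)^{4k}$, using $m, k \leq n$.

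Combining these estimates, the $k$-th term of the rescaled sum is at most $n^k \cdot (8n)^{4k} \cdot a^k = (8^4 \, n^5 \, a)^k$. Now invoking the hypothesis $a \leq c n^{-5}$, this is at most $(8^4 c)^k$. Provided we choose the constant $c$ small enough that $8^4 c < 1/2$, the full sum telescopes into a convergent geometric series: $\sum_{k=0}^\infty (8^4 c)^k \leq 2$. Multiplying back by $(4m)! \, a^m$ gives the desired bound with $C = 2$ (or any constant exceeding $1/(1 - 8^4 c)$).

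I expect no real obstacle here — the lemma is essentially a statement that the $k=0$ term dominates once $a$ is tiny compared to $n^{-5}$, and the $n^{-5}$ exponent is chosen precisely so that the factor $n^5$ arising from $n^k \cdot n^{4k}$ (one $n$ from the binomial, four from bounding the factorial ratio termwise) is killed. The only minor subtlety is that the bound $(4m+4k)^{4k}$ uses $m+k \leq 2n$, which holds since $m \leq n$ and $k \leq n$; this introduces the factor $8$ in $8^4$, but that is absorbed into the choice of $c$. No further finesse is needed.
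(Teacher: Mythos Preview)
Your proof is correct and essentially identical to the paper's own argument. The paper phrases it as bounding the ratio of consecutive terms $T_{k+1}/T_k \leq n(8n)^4 a \leq 4096c$, whereas you bound each term directly against $T_0$; both amount to the same crude estimates $\binom{n}{k}\leq n^k$ and $(4(m+k))!/(4m)! \leq (8n)^{4k}$, leading to a geometric series with ratio $8^4 c$ and the same constant $C=2$.
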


\begin{proof}
Denote the $k^{th}$ term in the sum by $T_k$. Then $T_{k+1} / T_k \leq n (8n)^4 a \leq 4096 n^5 a \leq 4096 c$. Thus, for $c \leq 0.5 / 4096$, the sum will be at most $2 T_0 = 2 (4m)! a^m$.
\end{proof}

\begin{lemma}
\label{fourier_inverse_lemma_2}
For some absolute constant $C > 0$,
\begin{equation*}
    \sum\limits_{k=1}^{n} \sum\limits_{\substack{s_1 + s_2 + \cdots + s_k = n \\ s_1, \ldots, s_k > 0}} \binom{n} {s_1, \ldots, s_k} \prod\limits_{i=1}^k (4s_i)! \leq C (4n)! .
\end{equation*}
\end{lemma}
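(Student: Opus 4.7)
The plan is to prove
$$L(n) := \sum_{k=1}^{n} \sum_{\substack{s_1 + \cdots + s_k = n \\ s_i \geq 1}} \binom{n}{s_1, \ldots, s_k} \prod_{i=1}^{k} (4s_i)! \;\leq\; C\,(4n)!$$
by induction on $n$, with a suitably chosen absolute constant $C$ (I expect $C = 2$ to suffice). The first step is to peel off the initial coordinate $s_1 = s$: using the identity $\binom{n}{s_1, s_2, \ldots, s_k} = \binom{n}{s_1}\binom{n - s_1}{s_2, \ldots, s_k}$, the inner sums over $(s_2, \ldots, s_k)$ reassemble exactly into $L(n - s)$, yielding the recursion
$$L(n) = (4n)! + \sum_{s=1}^{n-1} \binom{n}{s}\, (4s)!\, L(n - s),$$
with the $(4n)!$ term coming from $k=1$.

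Substituting the inductive hypothesis $L(n - s) \leq C\,(4(n-s))!$ and dividing through by $(4n)!$, the inductive step reduces to verifying the uniform bound
$$\sum_{s=1}^{n-1} \frac{\binom{n}{s}\,(4s)!\,(4(n-s))!}{(4n)!} \;=\; \sum_{s=1}^{n-1} \frac{\binom{n}{s}}{\binom{4n}{4s}} \;\leq\; 1 - \frac{1}{C}.$$
To bound this sum I would invoke the classical inequality $\binom{4n}{4s} \geq \binom{n}{s}^4$, which holds because picking $s$ elements from each of four disjoint copies of $[n]$ inside $[4n]$ produces $\binom{n}{s}^4$ distinct $4s$-subsets. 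This reduces the sum to $\sum_{s=1}^{n-1} \binom{n}{s}^{-3}$. Since $\binom{n}{s} \geq n$ for all $1 \leq s \leq n-1$, this is at most $(n-1)/n^3 \leq 1/n^2$, which is safely below $1/2$ as soon as $n \geq 2$.

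With $C = 2$ the induction closes for all $n \geq 2$, and the base case $n = 1$ is the trivial identity $L(1) = 4! \leq 2 \cdot 4!$. The only mild obstacle I foresee is bookkeeping the base case to ensure the same $C$ works throughout; this can be handled either by direct verification or by enlarging $C$ to absorb finitely many small values of $n$. The essential content of the bound is the rapid decay of $\binom{n}{s}/\binom{4n}{4s}$ in the tails, which shows that the $k=1$ summand $(4n)!$ already captures all but a vanishing fraction of $L(n)$.
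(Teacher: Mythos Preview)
Your proof is correct. The recursion $L(n) = (4n)! + \sum_{s=1}^{n-1}\binom{n}{s}(4s)!\,L(n-s)$ follows exactly as you describe, the reduction to $\sum_{s=1}^{n-1}\binom{n}{s}/\binom{4n}{4s} \le 1-1/C$ is valid, the inequality $\binom{4n}{4s}\ge\binom{n}{s}^4$ is a standard injection, and the final estimate $(n-1)/n^3 \le 1/4$ for $n\ge 2$ closes the induction with $C=2$.

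Your route is genuinely different from the paper's. The paper does not set up a recursion; instead it fixes $k$ and performs a direct ``dominant term'' analysis of the compositions $(s_1,\dots,s_k)$. By symmetry it restricts to $s_1=\max_i s_i$, identifies the leading contribution $s_1=n-k+1$, $s_2=\cdots=s_k=1$ (worth $k\cdot\frac{n!}{(n-k+1)!}(4n-4k+4)!\,24^{k-1}$), and then bounds the remaining terms by tracking how many coordinates $s_i$ exceed $1$, showing each such increment costs a factor of order $n^{-3}$ while the number of shapes grows only like $n^2$. Summing over $k$ then collapses to the $k=1$ term $(4n)!$. Your inductive argument is considerably shorter and more transparent, and it even pins down an explicit constant $C=2$, whereas the paper's argument only produces some absolute $C$. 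The paper's approach, on the other hand, gives more structural information about \emph{which} terms dominate (namely those with $k$ small and almost all $s_i=1$), information that your recursion hides inside the induction.
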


\begin{proof}
We begin by fixing some $1 \leq k \leq n$.

Each term of the sum has at least one $i$ such that $s_i$ is maximized. By symmetry, this sum is therefore at most $k$ times the contribution of terms where the maximum is achieved by $s_1$. Thus, it suffices to only consider terms with $s_1 = \max( s_1, \ldots, s_k)$, and to show that their sum is dominated by the term $s_1 = n - k + 1, s_2 = s_3 = \cdots = s_k = 1$, and that all other terms combined are $o(1)$ times the contribution of this term. Indeed, this main term is then precisely
\begin{equation*}
    \binom {n} {n-k+1, 1, 1, \ldots, 1} (4n - 4k + 4)! \cdot 4! \cdot 4! \cdots 4! = \frac {n!} {(n - k + 1)!} (4n - 4k + 4)! 24^{k-1},
\end{equation*}
times $k$ since the maximum can actually be obtained by any one of $s_1, s_2, \ldots, s_k$. Since this product rapidly decreases with $k$, summing this over all $k$ will result in some absolute constant $C_1$ times the $k = 1$ term, which is precisely $(4n)!$.

Any other term can be obtained from the term corresponding to $s_1 = n - k + 1, s_2 = 1, \ldots, s_k = 1$ by repeatedly removing $1$ from $s_1$ and adding it to some $s_i$. Since $s_1$ remains maximal, this process may only change the summand by a factor of
\begin{equation*}
    \frac {s_1} {s_i + 1} \cdot \frac {(4 s_i + 1) (4 s_i + 2) (4 s_i + 3) (4 s_i + 4)} {(4 s_1 - 3) (4 s_1 - 2) (4 s_1 - 1) 4 s_1} = \frac {(4 s_i + 1) (4 s_i + 2) (4 s_i + 3)} {(4 s_1 - 3) (4 s_1 - 2) (4 s_1 - 1)} < 1.
\end{equation*}
Furthermore, if this transition increased $s_i$ from $1$ to $2$ while keeping $s_1 \geq n / 2$, then this factor is at most $C_2 n^{-3}$.

Denote $I = \{i \in [k] : s_i > 1 \}$ and $k_1 = |I|$. We first transfer $1$ from $s_1$ to each $s_i$ with $i \in I$, multiplying the term by $C_2 n^{-3}$ for the first $\min (n/2, k_1)$ times. Any subsequent transfer from $s_1$ to some $s_i$ will not increase the term, and therefore any term with a given $k_1$ value is at most $(C n^{-3}) ^ {\min(n/2, k_1)}$ times the first term, for some $C > 0$.

Now, for any fixed $k$ and $k_1$, there are $\binom {k} {k_1}$ ways to choose the subset $I$, and $\binom {n-(k+k_1)+k_1-1} {k_1 - 1}$ ways to choose the values $s_i$ for all $i \in I$. Thus, the number of terms with a given $k_1$ is $\binom {k} {k_1} \binom {n-k-1} {k_1 - 1}$. This term grows by a factor of at most $C_3 n^2$ when $k_1 < n/2$ is increased by $1$, and does not increase at all when $k_1 \geq n/2$ is increased by $1$.

With $k$ fixed, increasing $k_1$ by $1$ multiplies the total contribution of terms with this $k_1$ by $C_2 C_3 n^{-1}$ for the first $n/2$ values of $k_1$, and does not increase it for larger $k_1$. Summing this over all $k_1$, we see that the total is at most some constant $C_4$ times the $k_1 = 1$ term, which is just
\begin{equation*}
    \frac {n!} {(n - k + 1)!} (4n - 4k + 4)! 24^{k-1} . \qedhere
\end{equation*}
\end{proof}

\subsection{Properties of Fourier Coefficients}
\label{fourier_props}

In order to obtain accurate estimates for certain quantities on $d$-regular random graphs, we use the theory of functions $f(x_1, x_2, \ldots, x_t)$ from the cube $\{0,1\}^t$ to $\mathbb{R}$. We employ the following shorthand: given $f \colon \{0, 1\}^t \rightarrow \mathbb{R}$, and $T \subseteq [t]$, we freely write $f(T)$ instead of $f(\mathds{1}_T)$, and $x_T$ instead of $\prod_{i \in T} x_i$.

Specifically, we are motivated by the following: given a sequence of edges $e_1, e_2, \ldots, e_t \in K_n$, and for any $1 \leq i \leq t$, define $p_{i} \colon \{0,1\}^{i-1} \rightarrow \mathbb{R}$ by
\begin{equation*}
    p_{i} (x_1, \ldots, x_{i-1}) = \mathds{P} [ e_i \in G_{n, d} \mid \bigcap\limits_{\substack{1 \leq j < i \\ x_j = 1}} \{ e_j \in G_{n, d} \} \cap \bigcap\limits_{\substack{1 \leq j < i \\ x_j = 0}} \{ e_j \notin G_{n, d} \} ] .
\end{equation*}

Then, for any sequence of edges $e_1, \ldots, e_t$, we will eventually obtain a bound on the expectation
\begin{equation*}
    \mathds{E} \left[ \prod\limits_{e \in \Gamma} (\mathds{1}_{e \in G} - p) \right],
\end{equation*}
using the Fourier coefficients of the functions $p_{1}, \ldots, p_{t}$.

We reference \cite{odonnell2014} throughout this section for standard results about the Fourier transform of functions on $\{0, 1\}^t$. Note that the notation there uses functions from $\{-1, 1\}^t$ instead, so some conversion is needed.

Any $f \colon \{0, 1\}^t \rightarrow \mathbb{R}$ admits a unique representation as a polynomial $f(x) = \sum_{T \subseteq [t]} a_T x_T$ for some choice of coefficients $a_T$ (see \cite{odonnell2014}, Theorem 1.1). We denote those coefficients $a_T$ corresponding to a specific $f$ by $\chi_T (f)$, and refer to them as the \textit{Fourier coefficients} of $f$. As proven in \cite{odonnell2014}, Proposition 1.8, they are given by
\begin{equation}
\label{fourier_formula}
    \chi_T (f) = \sum\limits_{S \subseteq T} (-1)^{|T| + |S|} f(S) .
\end{equation}

A formula for the Fourier transform of a product $f \cdot g$ can now be obtained from Theorem 1.27 and Exercise 1.12 of \cite{odonnell2014},
\begin{equation}
\label{fourierprod}
    \chi_S (f g) = \sum\limits_{S_1 \cup S_2 = S} \chi_{S_1} (f) \chi_{S_2} (g) .
\end{equation}

The following is a consequence of this:

\begin{corollary}
\label{fourierdiv}
If $f(0, \ldots, 0) = 1$ and $f(x_1, \ldots, x_t)$ is bounded away from $0$ for any $(x_1, \ldots, x_t) \in \{0, 1\}^t$, then
\begin{equation*}
    \chi_S (1 / f) = \sum\limits_{i=0}^{\infty} \sum\limits_{\substack {S_1 \cup \cdots \cup S_i = S \\ S_1, \ldots, S_j \neq \emptyset}} \  \prod\limits_{j=1}^{i} (-\chi_{S_j}(f)).
\end{equation*}
\end{corollary}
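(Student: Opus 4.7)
The plan is to expand $1/f$ as a Neumann-type series in $h := 1-f$ and then read off Fourier coefficients using the product formula \eqref{fourierprod}. Since $f(\mathbf{0}) = 1$, we have $h(\mathbf{0}) = 0$, so by \eqref{fourier_formula} $\chi_\emptyset(h) = 0$, while $\chi_T(h) = -\chi_T(f)$ for every nonempty $T \subseteq [t]$.

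Iterating \eqref{fourierprod} exactly $i-1$ times on $h^i = h \cdots h$ gives
\begin{equation*}
\chi_S(h^i) = \sum_{S_1 \cup \cdots \cup S_i = S} \prod_{j=1}^{i} \chi_{S_j}(h),
\end{equation*}
where a priori the $S_j$ may be empty. Because $\chi_\emptyset(h) = 0$, every composition containing an empty part contributes $0$, so the surviving terms are exactly those indexed by nonempty subsets, each equal to $\prod_j (-\chi_{S_j}(f))$. Hence the inner double sum in the corollary coincides with $\chi_S(h^i)$ for every $i \geq 0$ (the $i=0$ case being $\chi_S(1) = [S=\emptyset]$, which matches the empty-union/empty-product convention on the right).

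It remains to sum over $i \geq 0$ and identify the total as $\chi_S(1/f)$. Telescoping yields the exact pointwise identity $f \cdot \sum_{i=0}^{N} h^i = 1 - h^{N+1}$ on $\{0,1\}^t$; dividing by $f$ (permitted since $f$ is bounded away from $0$) and taking the $\chi_S$-coefficient of both sides gives
\begin{equation*}
\chi_S(1/f) = \sum_{i=0}^{N} \chi_S(h^i) + \chi_S\bigl(h^{N+1}/f\bigr),
\end{equation*}
so combining with the previous paragraph, the corollary follows once the remainder $\chi_S\bigl(h^{N+1}/f\bigr)$ is shown to vanish as $N \to \infty$.

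The main obstacle is controlling this remainder, which genuinely requires $\|h\|_\infty < 1$ (equivalently $f \in (0,2)$ pointwise) rather than merely $f$ being bounded away from $0$. This stronger condition is not made explicit in the statement, but holds wherever the corollary is applied in this paper, since the relevant $f$ is a small perturbation of the constant function~$1$. Under it, the pointwise bound $|h^{N+1}(x)/f(x)| \leq \|h\|_\infty^{N+1}/\min_x f(x) \to 0$ on the finite cube $\{0,1\}^t$, together with \eqref{fourier_formula}, forces every Fourier coefficient of the remainder to tend to $0$, completing the identification.
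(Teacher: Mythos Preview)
Your proof follows the paper's approach exactly --- geometric series in $h=1-f$, iterated product rule \eqref{fourierprod}, and $\chi_\emptyset(h)=0$ to kill empty parts --- but you handle convergence more carefully via finite partial sums and an explicit remainder. You are right that driving the remainder to zero requires $\|1-f\|_\infty<1$ rather than merely ``$f$ bounded away from $0$'' as stated (e.g.\ $t=1$, $f(0)=1$, $f(1)=3$ makes the right-hand side diverge); the paper glosses over this, but the stronger hypothesis holds in every application, in particular in Proposition~\ref{inv_bound} where $f=1+o(t^{-1})$.
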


\begin{proof}
Since $f$ is bounded away from $0$, we have
\begin{equation}
\label{expanded_inverse_fourier}
    \frac {1} {f} = \frac {1} {1 - (1 - f)} = \sum\limits_{i=0}^{\infty} (1-f)^i .
\end{equation}

Using \Cref{fourierprod} repeatedly, we see that
\begin{equation*}
    \chi_S((1-f)^i) = \sum\limits_{S_1 \cup \cdots \cup S_i = S} \prod\limits_{j=1}^i \chi_{S_j} (1 - f)
\end{equation*}
for all $i$. Since $f(0, \ldots, 0) = 1$, we will have $\chi_{T} (1-f) = 0$ if $T = \emptyset$, and otherwise $\chi_{T} (1-f)= -\chi_{T} (f)$. Plugging this into \Cref{expanded_inverse_fourier} proves the corollary.
\end{proof}

\subsection{Bounds of Fourier Coefficients}
\label{fourier_bounds}

Throughout this section, we assume that the functions $f, g$ have $f(0, \ldots, 0) = g(0, \ldots, 0) = 1$. We can now use the properties of the Fourier transform from \Cref{fourier_props} to show how bounds on the Fourier coefficients of the functions $f, g$ can be made into bounds on the Fourier coefficients of $f \cdot g$ and $1 / f$.

\begin{proposition}
\label{prod_bound}

For some large enough constant $C > 0$ and some small enough constant $c > 0$, assume that $a \leq c t ^ {-5}, b \leq 1$ and that $|\chi_{S} (f)|, |\chi_{S} (g)| \leq (4|S|)! a^{|S|} b$, for all nonempty $S \subseteq [t]$. Then $|\chi_{S} (f \cdot g)| \leq C (4|S|)! a^{|S|} b$, for all nonempty $S \subseteq [t]$.

\end{proposition}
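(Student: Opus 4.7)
The plan is to start from the product formula $\chi_S(fg) = \sum_{S_1 \cup S_2 = S} \chi_{S_1}(f)\, \chi_{S_2}(g)$ of \Cref{fourierprod}. Let $s := |S|$. Since $f(0,\ldots,0) = g(0,\ldots,0) = 1$, we have $\chi_\emptyset(f) = \chi_\emptyset(g) = 1$, so the two terms $(S_1, S_2) = (\emptyset, S)$ and $(S, \emptyset)$ produce $\chi_S(g) + \chi_S(f)$, bounded in absolute value by $2\,(4s)!\, a^s\, b$ via the hypothesis. What remains is
\[ \Sigma := \sum_{\substack{S_1 \cup S_2 = S \\ S_1, S_2 \neq \emptyset}} |\chi_{S_1}(f)|\,|\chi_{S_2}(g)| \leq b^2 \sum_{\substack{S_1 \cup S_2 = S \\ S_1, S_2 \neq \emptyset}} (4|S_1|)!\,(4|S_2|)!\, a^{|S_1|+|S_2|}, \]
and since $b^2 \leq b$ (using $b \leq 1$), it suffices to bound the factorial sum by $O((4s)!\, a^s)$.

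The key step is to parameterize $(S_1, S_2)$ asymmetrically. Fix $S_2$, with $s_2 := |S_2|$: every valid $S_1$ is then $(S \setminus S_2) \cup U$ for a unique $U \subseteq S_2$, and $|S_1| = (s - s_2) + |U|$. For $1 \leq s_2 \leq s-1$, $S_1 \neq \emptyset$ holds automatically, and the inner sum over $U$ is
\[ \sum_{u=0}^{s_2} \binom{s_2}{u}\, (4((s-s_2)+u))!\, a^{(s-s_2)+u}, \]
which, after the trivial bound $\binom{s_2}{u} \leq \binom{s}{u}$, is at most $C\,(4(s-s_2))!\, a^{s-s_2}$ by \Cref{fourier_product_lemma_1} applied with $m = s-s_2$ and $n = s$ (noting $a \leq c\, t^{-5} \leq c\, s^{-5}$ since $s \leq t$). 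The boundary case $s_2 = s$, where one enforces $U \neq \emptyset$, yields an inner-sum bound of $C$ via the same lemma with $m = 0$, $n = s$.

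Summing the per-$S_2$ bound over all $S_2$ with $|S_2| = s_2$ contributes a factor of $\binom{s}{s_2}$. Separating the $s_2 = s$ term gives
\[ \Sigma \leq C\, b\, a^s \left[ \sum_{s_2=1}^{s-1} \binom{s}{s_2}(4s_2)!\,(4(s-s_2))! + (4s)! \right]. \]
Applying the identity $\binom{s}{s_2}(4s_2)!(4(s-s_2))! = (4s)!\, \binom{s}{s_2}/\binom{4s}{4s_2}$, the bracket equals $(4s)!$ times $1 + \sum_{s_2=1}^{s-1}\binom{s}{s_2}/\binom{4s}{4s_2}$. The binomial-ratio sum is bounded by an absolute constant — the $s_2 = 1$ and $s_2 = s-1$ terms are each $O(1/s^3)$ (since $s/\binom{4s}{4} = O(1/s^3)$), and intermediate terms decay faster, as one checks with Stirling. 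This yields $\Sigma \leq C'\,(4s)!\,a^s\,b$; together with the $(\emptyset, S), (S, \emptyset)$ contributions, $|\chi_S(fg)| \leq (2+C')\,(4s)!\,a^s\,b$.

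I expect the main obstacle to be avoiding a spurious $2^s$ factor. A symmetric parameterization of $(S_1, S_2)$ via the triple $(S_1 \cap S_2,\, S_1 \setminus S_2,\, S_2 \setminus S_1)$, combined with the crude bound $(4|S_1|)!(4|S_2|)! \leq (4s)!\,(4|S_1 \cap S_2|)!$, would pick up a factor $2^{s - |S_1 \cap S_2|}$ from distributing $S \setminus (S_1 \cap S_2)$ between the two sides, and no version of \Cref{fourier_product_lemma_1} can absorb it. Fixing $S_2$ first, so that $U \subseteq S_2$ is the free parameter, confines those $2^{s_2}$ choices for $U$ inside an inner sum to which \Cref{fourier_product_lemma_1} applies sharply enough to preserve the target bound.
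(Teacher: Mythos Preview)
Your proof is correct and follows essentially the same approach as the paper: fix one of the two sets, parameterize the other by its overlap, apply \Cref{fourier_product_lemma_1} to the inner sum, and then observe that $\sum_{s_2}\binom{s}{s_2}(4s_2)!(4(s-s_2))!$ is dominated by its endpoint terms. The only cosmetic differences are that the paper fixes $S_1$ rather than $S_2$ and tracks the single $b$ factor through the general argument instead of isolating the $(\emptyset,S)$ and $(S,\emptyset)$ terms at the outset; your use of $\binom{s_2}{u}\leq\binom{s}{u}$ before invoking \Cref{fourier_product_lemma_1} is a small but clean touch, as it sidesteps the $m\leq n$ hypothesis of that lemma.
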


\begin{proof}
Since we are only concerned with subsets of $S$, we may assume $S = [t]$. Now, applying \Cref{fourierprod}, we see that
\begin{equation*}
    |\chi_{S} (f \cdot g)| \leq \sum\limits_{S_1 \cup S_2 = [t]} |\chi_{S_1} (f)| \cdot |\chi_{S_2} (g)| = \sum\limits_{S_1 \subseteq [t]} |\chi_{S_1} (f)| \sum\limits_{ \substack{ S_2 \subseteq [t] \\ S_1 \cup S_2 = [t]} } |\chi_{S_2} (g)|.
\end{equation*}

We partition the inner sum by the size $k$ of $S_1 \cap S_2$. Given any $S_1 \subseteq [t]$, the number of options for such $S_2$ of size $t - |S_1| + k$ is $\binom {|S_1|} {k}$, and thus the inner sum is bounded by
\begin{equation*}
    \sum\limits_{k = 0}^{|S_1|} \binom {|S_1|} {k} (4k+4t-4|S_1|)! a ^ { k+t-|S_1| } ,
\end{equation*}
times a factor of $b \leq 1$ if $S_1 \neq [t]$. By \Cref{fourier_product_lemma_1}, this sum is at most $C_1 (4t - 4|S_1|)! a^{t - |S_1|}$. Multiplying by $|\chi_{S_1} (f)| \leq (4 |S_1|)! a^{|S_1|}$, times an additional $b \leq 1$ factor if $S_1 \neq \emptyset$, we see that the sum of all terms involving a specific $S_1$ is at most $C_1 (4|S_1|)! (4t - 4|S_1|)! a^{t} b$.

Denote by $\Phi_k$ the sum of terms with $|S_1| = k$. We thus have $\Phi_k \leq C_1 \binom {t} {k} (4k)! (4t-4k)! a^{t} b$. Since the sum of $\binom {t} {k} (4k)! (4t-4k)!$ is dominated by the terms at $k=0$ and $k=t$, the sum will be dominated by $\Phi_0 + \Phi_n$, which is $C_2 (4t)! a^{t} b$ for some absolute constant $C_2$.
\end{proof}

\begin{proposition}
\label{inv_bound}
For some large enough constant $C > 0$ and some small enough constant $c > 0$, assume $f(x_1, \ldots, x_t)=1+o(t^{-1})$ for all $x_1, \ldots, x_t \in \{0, 1\}^t$, that $a \leq c t ^ {-6}, b \leq 1$, and that $|\chi_{S} (f)| \leq (4|S|)! a^{|S|} b$, for all nonempty $S \subseteq [t]$. Then $|\chi_{S} (1 / f)| \leq C (4|S|)! a^{|S|} b$, for all nonempty $S \subseteq [t]$.
\end{proposition}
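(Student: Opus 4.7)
The plan is to prove the bound by induction on $|S|$, deriving a recursion for $\chi_S(1/f)$ from the identity $f \cdot (1/f) = 1$. The hypothesis $f(x_1, \ldots, x_t) = 1 + o(t^{-1})$ guarantees that $|f(S)| \geq 1/2$ for all $S \subseteq [t]$ and $t$ large enough, which both legitimises the existence of $1/f$ and allows us to divide by $f(S)$ in the recursion.

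Concretely, applying the Fourier product formula \eqref{fourierprod} to $f \cdot (1/f)$ and noting that $\chi_S(1) = 0$ for nonempty $S$, one isolates the contribution of $S_2 = S$ (which, after summing $\chi_{S_1}(f)$ over all $S_1 \subseteq S$, equals $f(S) \chi_S(1/f)$) and of $S_2 = \emptyset$ (which equals $\chi_S(f)$, since $\chi_\emptyset(1/f) = 1/f(0) = 1$). Parameterising $S_1 \supseteq S \setminus S_2$ as $S_1 = (S \setminus S_2) \cup X$ with $X \subseteq S_2$ yields
\begin{equation*}
f(S)\, \chi_S(1/f) = - \chi_S(f) - \sum_{\emptyset \subsetneq S_2 \subsetneq S} \chi_{S_2}(1/f) \sum_{X \subseteq S_2} \chi_{(S \setminus S_2) \cup X}(f) .
\end{equation*}

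For the inductive step, let $s = |S|$ and assume $|\chi_{S_2}(1/f)| \leq C (4|S_2|)! a^{|S_2|} b$ for all $|S_2| < s$. Applying \Cref{fourier_product_lemma_1} with $m = s - |S_2|$ and $n = |S_2|$ (valid because $a \leq c t^{-6} \leq c t^{-5}$), the inner sum over $X$ is at most $C_1 (4(s - |S_2|))! a^{s - |S_2|} b$. Multiplying by the inductive bound on $|\chi_{S_2}(1/f)|$ and summing over subsets $S_2$ of size $s_2$, the contribution of each value of $s_2$ is at most $CC_1 \binom{s}{s_2} (4 s_2)! (4(s-s_2))!\, a^s b^2$. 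The crucial combinatorial observation is that the ratio $\binom{s}{s_2} / \binom{4s}{4 s_2}$ is of order $s^{-3}$ at the endpoints $s_2 \in \{1, s-1\}$ and decays rapidly (exponentially near $s_2 = s/2$), so
\begin{equation*}
\sum_{s_2 = 1}^{s - 1} \binom{s}{s_2} \frac{(4 s_2)! (4(s - s_2))!}{(4 s)!} = O\!\left( \frac{1}{s^3} \right),
\end{equation*}
which together with $b \leq 1$ and $|f(S)| \geq 1 - o(1)$ yields $|\chi_S(1/f)| \leq \frac{1}{|f(S)|} (4s)! a^s b \bigl(1 + O(C/s^3)\bigr)$, closing the induction with any fixed $C \geq 3$ as soon as $s$ exceeds some absolute constant $s_0$.

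The main obstacle is to keep the inductive constant $C$ bounded independently of $s$: this requires the sharp $O(s^{-3})$ bound above rather than the crude $O(1)$ bound that would follow directly from \Cref{fourier_inverse_lemma_2}, and this is precisely the reason to set up the recursion as an isolation of $\chi_S(1/f)$ in $\chi_S(f \cdot (1/f)) = 0$ rather than expand \Cref{fourierdiv} directly (which would force us to sum over the number $i$ of overlapping parts as well). The finitely many base cases $|S| \leq s_0$ are handled by the same recursion: for $|S| = 1$ one has $\chi_{\{j\}}(1/f) = -\chi_{\{j\}}(f)/f(\{j\})$, and for $2 \leq |S| \leq s_0$ only boundedly many terms appear in the recursion, all tiny by $a \leq ct^{-6}$, so $C$ can be enlarged to cover all of them.
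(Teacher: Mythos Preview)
Your proof is correct and takes a genuinely different route from the paper's. The paper expands $1/f$ as the geometric series $\sum_i (1-f)^i$ via \Cref{fourierdiv}, obtaining a sum over all sequences $S_1,\ldots,S_k$ with $\bigcup S_i = S$; it then organises this sum by the increments $s_i = |S_i \setminus (S_1 \cup \cdots \cup S_{i-1})|$ and invokes the combinatorial \Cref{fourier_inverse_lemma_2} to bound the resulting expression. Your approach instead extracts a two-term recursion from $\chi_S(f \cdot (1/f)) = 0$ and closes it by induction on $|S|$, using only the elementary estimate $\sum_{s_2=1}^{s-1}\binom{s}{s_2}/\binom{4s}{4s_2} = O(s^{-3})$. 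This is cleaner: it sidesteps \Cref{fourier_inverse_lemma_2} entirely and makes the uniform bound on $C$ transparent (indeed your computation shows $C=2$ already suffices once $t$ is large). The paper's route has the advantage of producing an explicit closed formula for $\chi_S(1/f)$, but for the purpose of this proposition your argument is shorter.

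One small technical point: when you invoke \Cref{fourier_product_lemma_1} with $m = s - |S_2|$ and $n = |S_2|$, the hypothesis $m \le n$ of that lemma fails whenever $|S_2| < s/2$. The conclusion you need is still true, since in your application $m+k \le s \le t$ and $a \le c t^{-5}$, so the ratio of consecutive terms is at most $t(4t+4)^4 a \le 4096\, c$ just as in the proof of \Cref{fourier_product_lemma_1}; but you should say this rather than cite the lemma verbatim.
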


\begin{proof}
We may assume $S = [t]$ once again. Using \Cref{fourierdiv}, we have
\begin{equation*}
    |\chi_{S} (1 / f)| \leq \sum\limits_{k=1}^{\infty} \sum\limits_{\substack {S_1 \cup \cdots \cup S_k = S \\ S_1, \ldots, S_k \neq \emptyset}} \prod\limits_{i=1}^k |\chi_{S_i} (f)| .
\end{equation*}

Fix any value of $k$. Given any $S_1, S_2, \ldots, S_{k-1}$ so that $|S_1 \cup S_2 \cup \cdots \cup S_{k-1}| = t_1 < t$, we partition the sum over $S_k$ according to $t_2$, the size of $(S_1 \cup S_2 \cup \cdots \cup S_{k-1}) \cap S_k$. The number of options for $S_k$ of size $t - t_1 + t_2$ is $\binom {t_1} {t_2}$, and each term corresponding to such $S_k$ is bounded by
\begin{equation*}
    (4t - 4t_1 + 4t_2)! a^{t - t_1 + t_2} b \prod\limits_{i=1}^{k-1} (4|S_i|)! a^{|S_i|} b .
\end{equation*}

Thus, summing over all possible options for $S_k$, the bound becomes
\begin{equation*}
    \prod\limits_{i=1}^{k-1} (4|S_i|)! a^{|S_i|} b \sum\limits_{t_2=0}^{t_1} \binom {t_1} {t_2} (4t - 4t_1 + 4t_2)! a^{t - t_1 + t_2}  b.
\end{equation*}
Since each term of the sum is only a $C_1 t^5 a$ fraction of the previous one, the sum is dominated by the $t_2 = 0$ term. Therefore, the sum is at most $(1 + C_2 t^{-1}) (4t - 4t_1)! a^{t - t_1}$.

If, however, $t_1 = t$, then since $S_k$ cannot be empty the bound is now
\begin{equation*}
    \prod\limits_{i=1}^{k-1} (4|S_i|)! a^{|S_i|} b \sum\limits_{t_2=1}^{t_1} (4t_2)! a^{t_2} b ,
\end{equation*}
and the sum is now at most $(24 + C_3 t^{-1}) a$ by the same reasoning as above. Choosing $c$ small enough, we may force $C_2, C_3$ to be arbitrarily small.

The same reasoning now holds for summing over all options $S_{k-1}$ that will yield a union of size $|S_1 \cup S_2 \cup \cdots \cup S_{k-1}| = t_1$ given the previous size $|S_1 \cup S_2 \cup \cdots \cup S_{k-2}| = \Tilde{t_1} < t_1$, we see that the sum is once again dominated by the term with $|S_{k-1}| = t_1 - \Tilde{t_1}$, or with $|S_{k-1}| = 1$ if $t_1 = \Tilde{t_1}$.

Continuing similarly for $S_{k-2}, S_{k-3}$ and so on, after $k$ iterations we eventually can bound the sum total of all the terms with any given $s_i = |S_i \setminus (S_1 \cup S_2 \cup \cdots \cup S_{i-1})|$. Specifically, if we denote $k_0 = |\{s_i = 0\}|, k_1 = |\{s_i > 0\}|$, this will yield a bound of
\begin{equation}
\label{sum_given_s}
    C_4 a^t (24 a)^{k_0} b^{k_0 + k_1} \prod\limits_{i=1}^{k_1} (4s_i)!
\end{equation}
for $C_2, C_3$ small enough. Note that $b^{k_0 + k_1} = b^t \leq b$, since $b \leq 1$. Therefore, summing over all non-negative $s_1, s_2, \ldots, s_k$ such that $s_1 + \cdots + s_k = t$, we get at most
\begin{equation*}
    C_4 a^t b \sum\limits_{s_1+s_2+\cdots+s_k=t} \binom {t} {s_1, \ldots, s_k} (24 a)^{k_0} \prod\limits_{i=1}^{k_1} (4s_i)!.
\end{equation*}

Instead of summing \Cref{sum_given_s} over all values of $k$, we sum over $k_0$, $k_1$: for each possible value, there are $\binom{k_1 + k_0} {k_0} \leq t^{k_0}$ options to choose which $s_i$ will be zero, so the sum is at most
\begin{equation*}
    C_4 a^t b \left[ \sum\limits_{k_0=0}^{\infty} (24 t a)^{k_0} \right] \left[ \sum\limits_{k_1=1}^{t}  \sum\limits_{\substack {s_1 + s_2 + \cdots +s_{k_1} = t \\ s_1, \ldots, s_{k_1} > 0}} \binom {t} {s_1, \ldots, s_{k_1}} \prod\limits_{i=1}^{k_1} (4s_i)! \right] .
\end{equation*}
The sum over $k_0$ is $1 + C_5 t a \leq C_6$. Applying \Cref{fourier_inverse_lemma_2}, we conclude that the sum over $k_1$ is at most some constant $C_7$ times $(4t)!$. The resulting bound is therefore $C_8 (4t)! a^t b$, for some constant $C_8$.
\end{proof}

\subsection{Properties of Random Regular Graphs}

This subsection presents some basic results regarding $d$-regular graphs. Recall that we assume $nd$ to be even.

\begin{proposition}
\label{graph_existence}
Let $\mathcal{A},\mathcal{B} \subseteq \binom {[n]} {2}$ be disjoint sets with $|\mathcal{A} \cup \mathcal{B}| = k$, with $2 k \leq d$ and $4 k \leq n - d - 1$. Then there exists a $d$-regular graph $G$ on $n$ vertices, with $\mathcal{A} \subseteq E(G)$ and $\mathcal{B} \cap E(G) = \emptyset$.
\end{proposition}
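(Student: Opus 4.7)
The plan is to construct $G$ directly in three stages, exploiting the fact that $\mathcal{A}\cup\mathcal{B}$ touches only a small vertex set. Write $V_A:=V(\mathcal{A}\cup\mathcal{B})$ and $V_B:=[n]\setminus V_A$; then $|V_A|\le 2k$ since $|\mathcal{A}\cup\mathcal{B}|=k$, and the hypothesis $4k\le n-d-1$ gives $|V_B|\ge n-2k\ge d+2k+1$. The crucial observation is that $\mathcal{B}\subseteq\binom{V_A}{2}$, so any edge with at least one endpoint in $V_B$ is automatically outside $\mathcal{B}$.

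\textbf{Stage 1:} include every edge of $\mathcal{A}$ in $G$. Each $v\in V_A$ now has residual demand $a_v:=d-\deg_{\mathcal{A}}(v)\in[d-k,d]$, using $\deg_{\mathcal{A}}(v)\le|\mathcal{A}|\le k$ and the hypothesis $2k\le d$. \textbf{Stage 2:} add a simple bipartite graph $H_1$ between $V_A$ and $V_B$ with $\deg_{H_1}(v)=a_v$ for every $v\in V_A$, distributing the induced load on $V_B$ as evenly as possible (e.g.\ by round-robin). Since each $u\in V_B$ can connect to at most $|V_A|\le 2k\le d$ vertices of $V_A$, the capacity on the $V_B$ side is never violated, and $H_1$ exists by Gale--Ryser. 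Every edge of $H_1$ crosses $V_A\times V_B$, so $H_1\cap\mathcal{B}=\emptyset$. Set $\delta_u:=\deg_{H_1}(u)\le 2k$ for each $u\in V_B$. \textbf{Stage 3:} add a graph $H_2$ on $V_B$ realizing the degree sequence $(d-\delta_u)_{u\in V_B}$, whose values lie in $[d-2k,d]$; then $G:=\mathcal{A}\cup H_1\cup H_2$ is the desired $d$-regular graph.

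To justify Stage 3, note that the degree sum $\sum_u(d-\delta_u)=d|V_B|-\sum_v a_v=dn-2d|V_A|+2|\mathcal{A}|$ has the parity of $nd$ and is therefore even. One then verifies the Erd\H{o}s--Gallai condition on $V_B$: for $i\le d-2k$ it reduces to $d\le|V_B|-1$, which follows from $|V_B|\ge d+2k+1$; the critical case $i=d$ reduces to $S\ge d^2+d$, where $S$ is the degree sum, and this follows from $S=d(n-2|V_A|)+2|\mathcal{A}|\ge d(d+1)$ using $n-2|V_A|\ge d+1$; the case $i>d$ is trivial since $L(i)$ grows slowly; and the intermediate range interpolates with strict slack.

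The main obstacle is precisely this graphicality check in Stage 3: the two hypotheses $2k\le d$ and $4k\le n-d-1$ are tailored so that the target degree sequence is non-negative with even sum, and so that the slack $|V_B|-d\ge 2k+1$ in $V_B$ provides the room needed in the Erd\H{o}s--Gallai inequalities. Once $H_2$ is produced, combining the three stages gives a $d$-regular graph $G$ with $\mathcal{A}\subseteq E(G)$ and $\mathcal{B}\cap E(G)=\emptyset$.
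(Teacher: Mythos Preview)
Your construction is sound and takes a genuinely different route from the paper. The paper first enlarges the at most $2k\le d$ vertices touched by $\mathcal{A}\cup\mathcal{B}$ to a set $V_1$ of size exactly $d+1$, puts a clique on $V_1$ and an arbitrary $d$-regular graph on $V_2=[n]\setminus V_1$, so that every edge of $\mathcal{A}\cup\mathcal{B}$ is present; it then deletes each forbidden edge $b\in\mathcal{B}$ by a single switching against an edge of a matching in $V_2$, with Vizing's theorem guaranteeing a matching of size at least $(n-d-1)/4\ge k$. Your three–stage additive build ($\mathcal{A}$, then a bipartite $H_1$, then $H_2$ on $V_B$) avoids both the clique trick and Vizing, trading them for a degree-sequence realizability check. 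The paper's argument is shorter and reuses the switching mechanism that drives the rest of the paper; yours is more self-contained but carries the Erd\H{o}s--Gallai bookkeeping.

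One step in your Stage~3 is not justified as written. You record only $d-\delta_u\in[d-2k,d]$ and then assert that ``the intermediate range interpolates with strict slack.'' With that crude range alone the inequality for $d-2k<i<d$ is not clear, and when $d=2k$ the lower end of the range is $0$, so the interpolation claim has no content. The fix is already implicit in your construction: the round-robin assignment makes all $\delta_u$ differ by at most~$1$, hence the target sequence on $V_B$ is nearly regular (values in $\{D-1,D\}$ for some $D\le d$) with $D\le |V_B|-1$. For a nearly regular sequence with even sum and maximum at most $m-1$, the Erd\H{o}s--Gallai inequalities hold automatically (this is a short direct check, with parity eliminating the lone borderline configuration at $m=D+1$). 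Invoking this, rather than the loose $[d-2k,d]$ bound, closes Stage~3 cleanly.
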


\begin{proof}
The union $\mathcal{A} \cup \mathcal{B}$ touches at most $2k \leq d$ vertices. Denote a set of $d + 1$ vertices containing all of those by $V_1$, and let $V_2 = [n] \setminus V_1$, and construct a clique $G_1$ on $V_1$ and an arbitrary $d$-regular graph $G_2$ on $V_2$. Their disjoint union results in a graph containing all edges from $\mathcal{A} \cup \mathcal{B}$.

Now, each edge $b = \{v_1, v_2\} \in \mathcal{B}$ needs to be removed, while keeping the graph $d$-regular. This can be done by preforming a \textit{switching} on $b$, together with some edge $e = \{v_3, v_4\} \in G_2$. This is done by removing those edges and adding the two edges $\{v_1, v_3\}, \{v_2, v_4\}$ in their place (see \Cref{forward_switching}). In order to do this for all of $\mathcal{B}$ simultaneously, the respective edges $e$ found in $G_2$ should form a matching. We would also like for them not to be incident to any edge in $\mathcal{B}$.

Thus, all that remains is to find a matching of size $|\mathcal{B}| \leq k$ in $V_2$. Indeed, $G_2$ is a $d$-regular graph on $n - d - 1$ vertices, so Vizing's theorem implies that the chromatic index $\chi ' (G_2) \leq d + 1$. The largest color class constitutes a matching of size at least
\begin{equation*}
    \frac {(n - d - 1) d} {2 (d + 1)} \geq \frac {n - d - 1} {4} \geq k . \qedhere
\end{equation*}
\end{proof}

\FloatBarrier
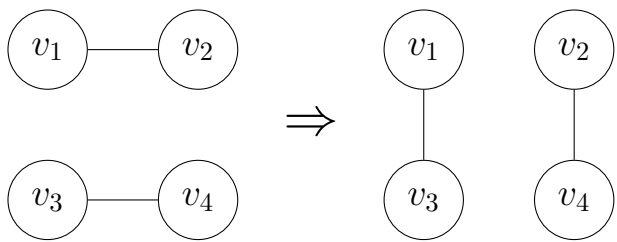
\begin{figure}
\centering
\begin{tikzpicture}[font={\fontsize{14pt}{12}\selectfont}]
\node[circle,draw=black,minimum size=30pt] (a00) at (0, 2) {$v_1$};
\node[circle,draw=black,minimum size=30pt] (a01) at (2, 2) {$v_2$};
\node[circle,draw=black,minimum size=30pt] (a10) at (0, 0) {$v_3$};
\node[circle,draw=black,minimum size=30pt] (a11) at (2, 0) {$v_4$};
\node[circle,draw=black,minimum size=30pt] (b00) at (5, 2) {$v_1$};
\node[circle,draw=black,minimum size=30pt] (b01) at (7, 2) {$v_2$};
\node[circle,draw=black,minimum size=30pt] (b10) at (5, 0) {$v_3$};
\node[circle,draw=black,minimum size=30pt] (b11) at (7, 0) {$v_4$};
\draw (a00) -- (a01);
\draw (a10) -- (a11);
\node[align=left, font={\fontsize{20pt}{12}\selectfont}] at (3.5,1) {$\Rightarrow$};
\draw (b00) -- (b10);
\draw (b01) -- (b11);
\end{tikzpicture}
\caption{The forward switching operation}
\label{forward_switching}
\end{figure}

We will later require precise estimates for certain quantities of random graphs which are nearly, but not exactly, $d$-regular. To that end, the following notation is used: given a set $\mathcal{A} \subseteq \binom {[n]} 2$ of allowed edges and a vector of degrees $\mathbf{d} \in \mathbb{Z}_{\geq 0}^n$, we denote by $\mathcal{G}_{\mathbf{d}, \mathcal{A}}$ the set of all graphs with degree sequence $\mathbf{d}$ and $E(G) \subseteq \mathcal{A}$. We similarly denote by $G_{\mathbf{d}, \mathcal{A}}$ a uniformly chosen element of this set. Therefore, $G_{n, d}$ is just $G_{\mathbf{d}, \mathcal{A}}$ for $\mathbf{d} = (d, d, \ldots, d)$ and $\mathcal{A} = \binom {[n]} {2}$.

To allow for small changes to the degree sequence, we denote by $\mathbf{e}_i \in \mathbb{Z}^n$ the vector with $1$ in the $i$th entry and $0$ anywhere else. Also, when subtracting elements of $\mathbb{Z}^n$, the subtraction is done coordinate-wise --- for example, $\mathbf{d} - \mathbf{e}_a - \mathbf{e}_b$ is just $\mathbf{d}$ with $1$ removed from the entries for $a$ and $b$.

We may now consider various properties of this set of graphs. Specifically, we introduce the notation:
\begin{enumerate}
    \item $N_{\mathbf{d}, \mathcal{A}} = |\mathcal{G}_{\mathbf{d}, \mathcal{A}}|$,
    \item $P_{\mathbf{d}, \mathcal{A}} (ab) = \mathds{P} [\{a,b\} \in E(G_{\mathbf{d}, \mathcal{A}})|$, and
    \item $Y_{\mathbf{d}, \mathcal{A}} (abc) = \mathds{P} [\{a,b\},\{b,c\} \in E(G_{\mathbf{d}, \mathcal{A}})|$.
\end{enumerate}
Our notation is similar to that of \cite{liebenauwormald2017}, but not completely consistent with it. We also denote $\mathcal{A}(v) = \{u \in [n] : \{v, u \} \in \mathcal{A} \}$, the ``projection'' of $\mathcal{A}$ to $v$. With $\mathbf{d}$ implicitly given, we also have $\mathcal{A}^*(v) = \{u \in [n] : \{v, u \} \in \mathcal{A}, P_{\mathbf{d}, \mathcal{A}} (ab) > 0 \}$.

We can now use this notation to state the following useful fact:
\begin{observation}
\label{conditioning_smaller_graph}
When conditioning on the existence of $\{a, b\} \in E(G_{\mathbf{d}, \mathcal{A}})$, the rest of the graph behaves just like $G_{\mathbf{d} - \mathbf{e}_a - \mathbf{e}_b, \mathcal{A} \setminus \{a, b\}}$ (assuming $P_{\mathbf{d}, \mathcal{A}} (ab) > 0$ so that such conditioning is possible). Similarly, whenever it is possible to condition on $\{a, b\} \notin E(G_{\mathbf{d}, \mathcal{A}})$, the rest of the graph behaves like $G_{\mathbf{d}, A \setminus \{a, b\}}$.

This implies, for example, that $Y_{\mathbf{d}, \mathcal{A}} (abc) = P_{\mathbf{d}, \mathcal{A}} (ab) P_{\mathbf{d} - \mathbf{e}_a - \mathbf{e}_b, \mathcal{A} \setminus \{a, b \}} (bc)$. Similarly, any term $\mathds{P} [\mathcal{B} \subseteq E(G_{\mathbf{d}, \mathcal{A}}), \mathcal{C} \cap E(G_{\mathbf{d}, \mathcal{A}}) = \emptyset]$ can be expanded as a product of $P$ and $1 - P$ terms with modified degree sequence and allowed edges.
\end{observation}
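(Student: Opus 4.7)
The proof reduces to a bijective counting argument that exploits the uniform distribution of $G_{\mathbf{d}, \mathcal{A}}$ on $\mathcal{G}_{\mathbf{d}, \mathcal{A}}$. My plan is to establish the two conditioning statements directly from the definitions, and then derive the $Y$-identity and the general product formula as corollaries via the chain rule and induction.

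First, I would handle conditioning on the presence of $\{a,b\}$. Let $\mathcal{G}^{ab} = \{G \in \mathcal{G}_{\mathbf{d}, \mathcal{A}} : \{a,b\} \in E(G)\}$. The map $G \mapsto G \setminus \{\{a,b\}\}$ is a bijection between $\mathcal{G}^{ab}$ and $\mathcal{G}_{\mathbf{d} - \mathbf{e}_a - \mathbf{e}_b,\, \mathcal{A} \setminus \{a,b\}}$: deleting the edge decreases the degrees at $a$ and $b$ by exactly one and preserves all other degrees, and the remaining edges lie in $\mathcal{A} \setminus \{a,b\}$. The inverse map adds the edge $\{a,b\}$ back, and this is well defined because the resulting degree vector is $\mathbf{d}$ and $\{a,b\} \in \mathcal{A}$. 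Since $G_{\mathbf{d},\mathcal{A}}$ is uniform on $\mathcal{G}_{\mathbf{d},\mathcal{A}}$, its conditional distribution on $\mathcal{G}^{ab}$ is uniform, and the bijection transfers this uniformity to $G_{\mathbf{d}-\mathbf{e}_a-\mathbf{e}_b,\, \mathcal{A} \setminus \{a,b\}}$, as claimed. The assumption $P_{\mathbf{d},\mathcal{A}}(ab) > 0$ is precisely what guarantees $\mathcal{G}^{ab}$ is nonempty so that the conditioning is meaningful.

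Second, for conditioning on the absence of $\{a,b\}$, observe the tautology
\begin{equation*}
\{G \in \mathcal{G}_{\mathbf{d}, \mathcal{A}} : \{a,b\} \notin E(G)\} = \mathcal{G}_{\mathbf{d},\, \mathcal{A} \setminus \{a,b\}},
\end{equation*}
since forbidding $\{a,b\}$ from $\mathcal{A}$ is equivalent to requiring the degree sequence $\mathbf{d}$ while not using $\{a,b\}$ as an edge. Uniformity transfers directly.

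From these two statements the $Y$-identity follows by writing
\begin{equation*}
Y_{\mathbf{d},\mathcal{A}}(abc) = \mathds{P}[\{a,b\} \in E(G_{\mathbf{d},\mathcal{A}})] \cdot \mathds{P}[\{b,c\} \in E(G_{\mathbf{d},\mathcal{A}}) \mid \{a,b\} \in E(G_{\mathbf{d},\mathcal{A}})],
\end{equation*}
and applying the first part to identify the conditional probability as $P_{\mathbf{d}-\mathbf{e}_a-\mathbf{e}_b,\, \mathcal{A} \setminus \{a,b\}}(bc)$. The general claim for $\mathds{P}[\mathcal{B} \subseteq E(G_{\mathbf{d},\mathcal{A}}),\, \mathcal{C} \cap E(G_{\mathbf{d},\mathcal{A}}) = \emptyset]$ follows by induction on $|\mathcal{B}| + |\mathcal{C}|$: strip off one edge at a time, apply the appropriate one of the two statements above to reduce to a smaller instance on a modified $(\mathbf{d}, \mathcal{A})$, and multiply by the corresponding $P$ or $1 - P$ factor. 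There is no real obstacle here; the only care needed is to check at each step that the conditioning is admissible (i.e.\ the probability being conditioned on is strictly between $0$ and $1$), which is exactly the hypothesis already carried along in the statement.
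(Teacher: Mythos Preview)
Your proof is correct. The paper states this as an Observation without proof, treating it as self-evident from the definitions; your bijective counting argument is exactly the standard justification one would supply, and matches the implicit reasoning the paper relies on.
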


The following recursive formulae for $P_{\mathbf{d}, \mathcal{A}} (ab), Y_{\mathbf{d}, \mathcal{A}} (abc)$ are proven by Liebenau and Wormald in \cite{liebenauwormald2017}:

\begin{proposition}
\label{recursive_estimates}

If $P_{\mathbf{d}, \mathcal{A}} (ab) > 0$, then
\begin{equation}
\label{recursive_p}
P_{\mathbf{d}, \mathcal{A}} (ab) = \mathbf{d} (b) \left( \sum\limits_{c \in \mathcal{A}^*(b)} \frac {\mathbf{d}(c) - B_{\mathbf{d} - \mathbf{e}_a - \mathbf{e}_b, \mathcal{A}} (ca)} {\mathbf{d}(a) - B_{\mathbf{d} - \mathbf{e}_b - \mathbf{e}_c, \mathcal{A}} (ac)} \cdot \frac {1 - P_{\mathbf{d} - \mathbf{e}_b - \mathbf{e}_c, \mathcal{A}}(bc)} {1 - P_{\mathbf{d} - \mathbf{e}_a - \mathbf{e}_b, \mathcal{A}} (ab)} \right)^{-1} ,
\end{equation}
where $B_{\mathbf{d}, \mathcal{A}} (ab)$ is defined by
\begin{equation}
\label{recursive_b}
B_{\mathbf{d}, \mathcal{A}} (ab) = \sum\limits_{c \in \mathcal{A}(a) \setminus \mathcal{A}(b)} P_{\mathbf{d}, \mathcal{A}} (ac) + \sum\limits_{c \in \mathcal{A}(a) \cap \mathcal{A}(b)} Y_{\mathbf{d}, \mathcal{A}} (acb) .
\end{equation}
This quantity can be interpreted as the expected number of edges incident to $a$ that cannot be switched to be incident to $b$. Also,
\begin{equation}
\label{recursive_y}
Y_{\mathbf{d}, \mathcal{A}} (abc) = P_{\mathbf{d}, \mathcal{A}} (ab) \frac {P_{\mathbf{d} - \mathbf{e}_a - \mathbf{e}_b, \mathcal{A}} (bc)-Y_{\mathbf{d} - \mathbf{e}_a - \mathbf{e}_b, \mathcal{A}} (abc)} {1 - P_{\mathbf{d} - \mathbf{e}_a - \mathbf{e}_b, \mathcal{A}} (ab)} .
\end{equation}
\end{proposition}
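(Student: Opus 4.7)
The plan is to prove the three formulas in order of increasing difficulty: (\ref{recursive_b}) is a linearity-of-expectation identity, (\ref{recursive_y}) follows from \Cref{conditioning_smaller_graph} together with Bayes' rule, and (\ref{recursive_p}) is the substantive step, obtained by a double-counting argument on the forward switching of \Cref{forward_switching}.

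For (\ref{recursive_b}), I interpret the right-hand side directly: an edge $\{a, c\} \in E(G_{\mathbf{d}, \mathcal{A}})$ is ``unswitchable to $b$'' precisely when the candidate replacement $\{b, c\}$ is either forbidden (contributing $P_{\mathbf{d}, \mathcal{A}}(ac)$ for $c \in \mathcal{A}(a) \setminus \mathcal{A}(b)$) or is already an edge (contributing $Y_{\mathbf{d}, \mathcal{A}}(acb)$ for $c \in \mathcal{A}(a) \cap \mathcal{A}(b)$). Linearity of expectation then gives (\ref{recursive_b}). For (\ref{recursive_y}), \Cref{conditioning_smaller_graph} yields $Y_{\mathbf{d}, \mathcal{A}}(abc) = P_{\mathbf{d}, \mathcal{A}}(ab) \cdot P_{\mathbf{d} - \mathbf{e}_a - \mathbf{e}_b, \mathcal{A} \setminus \{a, b\}}(bc)$; the second factor lives in a model where $\{a, b\}$ is forbidden, which by a second application of \Cref{conditioning_smaller_graph} is the conditioning of $G_{\mathbf{d} - \mathbf{e}_a - \mathbf{e}_b, \mathcal{A}}$ on $\{a, b\} \notin E$, so Bayes produces
\begin{equation*}
    P_{\mathbf{d} - \mathbf{e}_a - \mathbf{e}_b, \mathcal{A} \setminus \{a, b\}}(bc) = \frac{P_{\mathbf{d} - \mathbf{e}_a - \mathbf{e}_b, \mathcal{A}}(bc) - Y_{\mathbf{d} - \mathbf{e}_a - \mathbf{e}_b, \mathcal{A}}(abc)}{1 - P_{\mathbf{d} - \mathbf{e}_a - \mathbf{e}_b, \mathcal{A}}(ab)},
\end{equation*}
matching (\ref{recursive_y}) after substitution.

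The core step is (\ref{recursive_p}). I perform the forward switching of \Cref{forward_switching} with $v_1 = a$, $v_2 = b$, $v_4 = c$, and $v_3$ an old partner of $c$: starting from a graph $G$ containing $\{a, b\}$ and $\{v_3, c\}$ with $\{a, v_3\}, \{b, c\} \notin E$, the switching yields a graph $G'$ with $\{a, b\} \notin E(G')$ and $\{b, c\} \in E(G')$. Double-counting the valid configurations $(G, v_3, c)$ --- grouping by $G$ containing $\{a, b\}$ on one side and by $G'$ containing $\{b, c\}$ on the other --- gives, for each $c \in \mathcal{A}^*(b)$, an identity of the form
\begin{equation*}
    N_{ab} \cdot \mathds{P}[\{b,c\} \notin E \mid \{a, b\} \in E] \cdot \bigl(\mathbf{d}(c) - B_{\mathbf{d} - \mathbf{e}_a - \mathbf{e}_b, \mathcal{A}}(ca)\bigr) = N_{bc} \cdot \mathds{P}[\{a, b\} \notin E \mid \{b, c\} \in E] \cdot \bigl(\mathbf{d}(a) - B_{\mathbf{d} - \mathbf{e}_b - \mathbf{e}_c, \mathcal{A}}(ac)\bigr),
\end{equation*}
where $N_{ab}, N_{bc}$ denote the number of graphs in $\mathcal{G}_{\mathbf{d}, \mathcal{A}}$ containing $\{a, b\}$ and $\{b, c\}$ respectively. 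The $\mathbf{d}(\cdot) - B$ factors count the expected number of valid switching partners $v_3$ after passing to the appropriate conditional model via \Cref{conditioning_smaller_graph}. Rewriting the conditional non-edge probabilities in the form $1 - P$ by Bayes (exactly as in the proof of (\ref{recursive_y})), dividing by $N$, summing over $c$, and using the degree identity $\sum_{c \in \mathcal{A}^*(b)} P_{\mathbf{d}, \mathcal{A}}(bc) = \mathbf{d}(b)$ yields (\ref{recursive_p}).

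The main obstacle is the careful bookkeeping of invalid switchings --- configurations in which $v_3 \in \{a, b\}$, or in which $\{a, v_3\}$ is forbidden or is already an edge of $G$ --- since these are exactly the events the $B$ terms are designed to count. Identifying the precise reduced degree sequence and allowed-edge set in which each $B$, $P$, and $Y$ term is evaluated, and reconciling the $\mathcal{A} \setminus \{a, b\}$ (respectively $\mathcal{A} \setminus \{b, c\}$) subscripts arising from conditioning with the plain-$\mathcal{A}$ subscripts appearing in the stated formula, is where essentially all the care in the proof lies.
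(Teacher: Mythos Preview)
Your treatment of \eqref{recursive_y} matches the paper's exactly, and your interpretation of \eqref{recursive_b} is fine (it is just a definition there). The difference is in \eqref{recursive_p}.

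The paper does \emph{not} perform the four-vertex switching of \Cref{forward_switching} inside $\mathcal{G}_{\mathbf{d},\mathcal{A}}$. Instead it proceeds in two decoupled steps. First, from $\sum_{c\in\mathcal{A}^*(b)}P_{\mathbf{d},\mathcal{A}}(bc)=\mathbf{d}(b)$ it reduces to computing the ratios $P_{\mathbf{d},\mathcal{A}}(bc)/P_{\mathbf{d},\mathcal{A}}(ab)$, and uses the identity $P_{\mathbf{d},\mathcal{A}}(ab)\,N_{\mathbf{d},\mathcal{A}}=(1-P_{\mathbf{d}-\mathbf{e}_a-\mathbf{e}_b,\mathcal{A}}(ab))\,N_{\mathbf{d}-\mathbf{e}_a-\mathbf{e}_b,\mathcal{A}}$ (and its analogue for $bc$) to convert this into a ratio $N_{\mathbf{d}-\mathbf{e}_b-\mathbf{e}_c,\mathcal{A}}/N_{\mathbf{d}-\mathbf{e}_a-\mathbf{e}_b,\mathcal{A}}$. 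This is where the $(1-P)$ factors in \eqref{recursive_p} come from, and note that they carry the \emph{full} $\mathcal{A}$ as subscript. Second, that $N$-ratio is computed by a switching between the two reduced-degree models $\mathcal{G}_{\mathbf{d}-\mathbf{e}_a-\mathbf{e}_b,\mathcal{A}}$ and $\mathcal{G}_{\mathbf{d}-\mathbf{e}_b-\mathbf{e}_c,\mathcal{A}}$: one simply moves a single edge endpoint from $c$ to $a$ (remove $\{c,v_3\}$, add $\{a,v_3\}$). Because both models have allowed-edge set $\mathcal{A}$, the bad-$v_3$ counts are exactly $B_{\mathbf{d}-\mathbf{e}_a-\mathbf{e}_b,\mathcal{A}}(ca)$ and $B_{\mathbf{d}-\mathbf{e}_b-\mathbf{e}_c,\mathcal{A}}(ac)$ with plain $\mathcal{A}$, and \eqref{recursive_p} drops out directly.

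Your one-step switching forces you to condition simultaneously on $\{a,b\}\in E$ and $\{b,c\}\notin E$, so the count of valid $v_3$ lives in the model $G_{\mathbf{d}-\mathbf{e}_a-\mathbf{e}_b,\,\mathcal{A}\setminus\{ab,bc\}}$, and the non-edge factor you get is $1-P_{\mathbf{d}-\mathbf{e}_a-\mathbf{e}_b,\,\mathcal{A}\setminus\{ab\}}(bc)$ rather than $1-P_{\mathbf{d}-\mathbf{e}_b-\mathbf{e}_c,\,\mathcal{A}}(bc)$. These are not the quantities appearing in \eqref{recursive_p}, and the ``reconciliation'' you flag is not bookkeeping: your switching produces a genuinely different (though presumably equivalent) recursive identity, and I do not see how to recover the exact formula \eqref{recursive_p} from it without effectively reintroducing the paper's intermediate passage through $N_{\mathbf{d}-\mathbf{e}_a-\mathbf{e}_b,\mathcal{A}}$. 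Since the rest of the paper iterates the operators $\mathcal{P},\mathcal{Y}$ built verbatim from \eqref{recursive_p}, you need that exact form; the cleanest fix is to adopt the two-step route above.
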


\begin{proof}
\Cref{recursive_p} stems from the fact that $b$ has precisely $\mathbf{d}(b)$ neighbors in every $G \in \mathcal{G}_{\mathbf{d}, \mathcal{A}}$. Averaging over all such $G$, we get $\mathbf{d}(b) = \sum\limits_{c \in \mathcal{A}^*(b)} P_{\mathbf{d}, \mathcal{A}} (bc)$. Thus, we have
\begin{equation}
\label{p_ratios_sum}
    \frac {\mathbf{d}(b)} {P_{\mathbf{d}, \mathcal{A}} (ab)} = \sum\limits_{c \in \mathcal{A}^*(b)} \frac {P_{\mathbf{d}, \mathcal{A}} (bc)} {P_{\mathbf{d}, \mathcal{A}} (ab)} .
\end{equation}

In order to calculate the ratios $P_{\mathbf{d}, \mathcal{A}} (bc) / P_{\mathbf{d}, \mathcal{A}} (ab)$, note that $(1 - P_{\mathbf{d} - \mathbf{e}_a - \mathbf{e}_b, \mathcal{A}} (ab)) N_{\mathbf{d} - \mathbf{e}_a - \mathbf{e}_b, \mathcal{A}} = N_{\mathbf{d} - \mathbf{e}_a - \mathbf{e}_b, \mathcal{A} \setminus \{a, b\}} = P_{\mathbf{d}, \mathcal{A}} (ab) N_{\mathbf{d}, \mathcal{A}}$, as implied by \Cref{conditioning_smaller_graph}. A similar equality holds for graphs with degree sequence $\mathbf{d} - \mathbf{e}_b - \mathbf{e}_c$, and therefore
\begin{equation}
\label{p_ratio_to_n_ratio}
    \frac {P_{\mathbf{d}, \mathcal{A}} (bc)} {P_{\mathbf{d}, \mathcal{A}} (ab)} = \frac {P_{\mathbf{d}, \mathcal{A}} (bc) N_{\mathbf{d}, \mathcal{A}}} {P_{\mathbf{d}, \mathcal{A}} (ab) N_{\mathbf{d}, \mathcal{A}}} = \frac {1 - P_{\mathbf{d} - \mathbf{e}_b - \mathbf{e}_c, \mathcal{A}} (bc)} {1 - P_{\mathbf{d} - \mathbf{e}_a - \mathbf{e}_b, \mathcal{A}} (ab)} \cdot \frac {N_{\mathbf{d} - \mathbf{e}_b - \mathbf{e}_c, \mathcal{A}}} {N_{\mathbf{d} - \mathbf{e}_a - \mathbf{e}_b, \mathcal{A}}} .
\end{equation}

On the other hand, the ratio between the number of graphs with degree sequence $\mathbf{d} - \mathbf{e}_a - \mathbf{e}_b$ and graphs with degree sequence $\mathbf{d} - \mathbf{e}_b - \mathbf{e}_c$ can be determined by a double counting argument: one can count the average number of ways to switch forwards and backwards between such graphs. Together with \Cref{p_ratio_to_n_ratio}, this turns out to yield the formula
\begin{equation*}
    \frac {P_{\mathbf{d}, \mathcal{A}} (bc)} {P_{\mathbf{d}, \mathcal{A}} (ab)} = \frac {1 - P_{\mathbf{d} - \mathbf{e}_b - \mathbf{e}_c, \mathcal{A}} (bc)} {1 - P_{\mathbf{d} - \mathbf{e}_a - \mathbf{e}_b, \mathcal{A}} (ab)} \cdot \frac {N_{\mathbf{d} - \mathbf{e}_b - \mathbf{e}_c, \mathcal{A}}} {N_{\mathbf{d} - \mathbf{e}_a - \mathbf{e}_b, \mathcal{A}}} = \frac {\mathbf{d}(c) - B_{\mathbf{d} - \mathbf{e}_a - \mathbf{e}_b}(ca)} {\mathbf{d}(a) - B_{\mathbf{d} - \mathbf{e}_b - \mathbf{e}_c}(ac)} .
\end{equation*}

\Cref{recursive_p} now follows by plugging this back into the sum of \Cref{p_ratios_sum}, and solving for $P_{\mathbf{d}, \mathcal{A}} (ab)$.

\Cref{recursive_y} is significantly easier to prove, and ultimately amounts to expanding $Y_{\mathbf{d}, \mathcal{A}} (abc)$ as $P_{\mathbf{d}, \mathcal{A}} (ab) P_{\mathbf{d} - \mathbf{e}_a - \mathbf{e}_b, \mathcal{A} \setminus \{a,b\}} (bc)$, as explained in \Cref{conditioning_smaller_graph}. By conditioning on the fact that $\{a,b\} \notin G_{\mathbf{d} - \mathbf{e}_a - \mathbf{e}_b, \mathcal{A}}$, we obtain
\begin{equation*}
    P_{\mathbf{d} - \mathbf{e}_a - \mathbf{e}_b, \mathcal{A} \setminus \{a,b\}} (bc) = \frac {P_{\mathbf{d} - \mathbf{e}_a - \mathbf{e}_b, \mathcal{A}} (bc)-Y_{\mathbf{d} - \mathbf{e}_a - \mathbf{e}_b, \mathcal{A}} (abc)} {1 - P_{\mathbf{d} - \mathbf{e}_a - \mathbf{e}_b, \mathcal{A}} (ab)} . \qedhere
\end{equation*}
\end{proof}

Repeated use of \Cref{recursive_estimates} will yield gradually improving estimates for $P_{\mathbf{d}, \mathcal{A}}$, but we need to plug some initial estimate. Note that those estimates must apply not only to $P_{\mathbf{d}, \mathcal{A}}$ but to slightly modified degree sequences $\mathbf{d}'$. For brevity, for any integer $t \geq 0$, we introduce the notation
\begin{equation*}
    B_t (n, d) = \{\mathbf{d} \in [d]^n : \sum\limits_{i=1}^n \mathbf{d}(i) \geq dn - 2t \} .
\end{equation*}

\begin{claim}
\label{first_estimate}
Let $d \leq c n$ for some small enough $c$, $t \ll d$, $\mathcal{A} \subseteq \binom {[n]} {2}$ missing at most $t$ edges, and $\mathbf{d} \in B_t(n, d)$. Then $P_{\mathbf{d}, \mathcal{A}} (ab) \leq (1 + 4c) d / n$.
\end{claim}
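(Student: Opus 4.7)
The plan is to use a double-counting (switching) argument comparing graphs in $\mathcal{G}_{\mathbf{d}, \mathcal{A}}$ that contain the edge $\{a, b\}$ with those that do not. Write $\mathcal{G}_1 = \{G \in \mathcal{G}_{\mathbf{d}, \mathcal{A}} : \{a, b\} \in E(G)\}$ and $\mathcal{G}_0 = \mathcal{G}_{\mathbf{d}, \mathcal{A}} \setminus \mathcal{G}_1$, so that $P_{\mathbf{d}, \mathcal{A}}(ab) = |\mathcal{G}_1|/(|\mathcal{G}_1| + |\mathcal{G}_0|) \leq |\mathcal{G}_1|/|\mathcal{G}_0|$. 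Define a four-cycle switch taking $G \in \mathcal{G}_1$ together with an ordered pair $(x, y)$ with $x, y \notin \{a, b\}$, $\{x, y\} \in E(G)$, and $\{a, x\}, \{b, y\} \in \mathcal{A} \setminus E(G)$ to the graph $G' = G \triangle \{\{a, b\}, \{x, y\}, \{a, x\}, \{b, y\}\} \in \mathcal{G}_0$. This preserves the degree sequence and the $\mathcal{A}$-admissibility.

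First, I would upper-bound the backward count $b(G')$ for $G' \in \mathcal{G}_0$: a reverse switch is determined by the ordered pair $(x, y)$ with $x \in N_{G'}(a)$ and $y \in N_{G'}(b)$, so trivially $b(G') \leq \mathbf{d}(a)\mathbf{d}(b) \leq d^2$. Second, I would lower-bound the forward count $f(G)$ for $G \in \mathcal{G}_1$. The number of ordered pairs $(x, y)$ with $\{x, y\} \in E(G)$ and $x, y \notin \{a, b\}$ is $2|E(G)| - 2(\mathbf{d}(a) + \mathbf{d}(b)) + 2 \geq dn - 2t - 4d + 2$, using $\mathbf{d} \in B_t(n, d)$. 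From this one subtracts the ``bad'' pairs: those with $\{a, x\} \in E(G)$ or $\{b, y\} \in E(G)$ contribute at most $\mathbf{d}(a) d + \mathbf{d}(b) d \leq 2 d^2$, while those with $\{a, x\} \notin \mathcal{A}$ or $\{b, y\} \notin \mathcal{A}$ contribute at most $2 t d$ (since each of $\mathcal{A}(a)^c, \mathcal{A}(b)^c$ has size at most $t$, and each such vertex has degree at most $d$). Thus $f(G) \geq dn - O(d^2 + td) = dn\bigl(1 - O(c + t/n)\bigr)$, which is $\geq dn(1 - O(c))$ since $t \ll d \leq c n$.

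The double-counting identity $\sum_{G \in \mathcal{G}_1} f(G) = \sum_{G' \in \mathcal{G}_0} b(G')$ now yields
\begin{equation*}
    |\mathcal{G}_1| \cdot dn (1 - O(c)) \leq |\mathcal{G}_0| \cdot d^2,
\end{equation*}
so $P_{\mathbf{d}, \mathcal{A}}(ab) \leq |\mathcal{G}_1|/|\mathcal{G}_0| \leq d / [n(1 - O(c))]$, which is bounded by $(1 + 4c) d / n$ once $c$ is chosen small enough (the implicit constant in $O(c)$ being absolute). The main obstacle is purely quantitative: one must ensure that the error terms $d^2$ and $td$ are swamped by $dn$. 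Both are handled by the hypotheses $d \leq c n$ and $t \ll d$, leaving no room for a subtler structural difficulty; this is consistent with the claim being labelled a ``first estimate'' meant to seed the recursion of \Cref{recursive_estimates}.
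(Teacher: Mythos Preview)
Your proposal is correct and follows essentially the same switching argument as the paper: both set up the forward switch removing $\{a,b\}$ via an auxiliary ordered edge, upper-bound the reverse count by $d^2$, and lower-bound the forward count by $dn - O(d^2)$ using $d \le cn$ and $t \ll d$. The only difference is cosmetic---the paper bounds the number of forbidden vertices by $d+t+1$ and subtracts $2(d+t+1)^2$ in one stroke, whereas you separate the $E(G)$-obstructions ($\le 2d^2$) from the $\mathcal{A}$-obstructions ($\le 2td$)---but the resulting inequality $P_{\mathbf{d},\mathcal{A}}(ab) \le d / [n(1-O(c))] \le (1+4c)d/n$ is identical.
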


\begin{proof}
This is very similar to Lemma 2.3 in \cite{liebenauwormald2017}, with the slight modification that $\mathcal{A}$ may be slightly smaller than $\binom {[n]} {2}$.

Recall the switching operation, as described in \Cref{graph_existence} and illustrated in \Cref{forward_switching}. This operation takes two edges $\{ a, b \}, \{c, d\} \in G$ such that $\{ a, c \}, \{b, d\} \notin G$, removes $\{ a, b \}, \{c, d\}$ from the graph and adds $\{ a, c \}, \{b, d\}$ to it, so that all the degrees remain the same.

We can switch any graph with $\{ a, b \} \in G$ to a graph without it, and vice versa. By counting the number of possible ways to preform such a switch and to switch back, we may relate the number of graphs with $\{a, b\}$ as an edge to the number of graphs without it, and obtain an estimate on the probability that it appears.

The number of switching operations that remove $\{a, b\}$ is just the number of ways to choose an ordered edge $e_1 = \{ v_1, v_2 \}$ such that $e_1$ is in $E(G)$ and $e_2 = \{ v_1, a \}, e_3 = \{ v_2, b \}$ are not. There are at least $n d - t$ ordered edges, and at most $d + t + 1$ vertices are forbidden from being $v_1$ or $v_2$ --- either they are part of some edge that is not in $\mathcal{A}$, or they are neighbors of $a$ or $b$, or they themselves are $a$ or $b$. Thus, there are at least $n d - t - 2(d + t + 1)^2 = n d - 2d^2 + o(d^2) \geq (1 - 2 c) n d + o(d^2) \geq (1 - 3c) n d$ such choices of an ordered edge.

We can similarly switch in the other direction, starting from a graph without the edge $\{ a, b \}$, by choosing $e_1 \notin E(G)$ such that $e_2, e_3 \in E(G)$. Since $a, b$ each have $d$ neighbors, there are at most $d^2$ ways to do this.

Therefore, we can consider the ratio $\beta$ between the number of graphs with $ab$ to the number of graphs without it. We know that
\begin{equation*}
    \beta = \frac {P_{\mathbf{d}, \mathcal{A}}(ab)} {1 - P_{\mathbf{d}, \mathcal{A}}(ab)} < \frac {d^2} {(1 - 3c) n d} < (1 + 4c) d / n ,
\end{equation*}
for $c$ small enough. Thus $P_{\mathbf{d}, \mathcal{A}}(ab) \leq \beta / (1 + \beta) \leq (1 + 4c) d / n$.
\end{proof}

We are now ready to estimate various probabilities in $G_{\mathbf{d}, \mathcal{A}}$. In particular, \Cref{first_estimate} suggests that the estimates $\tilde{P}_{\mathbf{d}, \mathcal{A}} (ab) = d / n$ and $\tilde{Y}_{\mathbf{d}, \mathcal{A}} (abc) = d^2 / n^2$ may be reasonable first guesses for $P_{\mathbf{d}, \mathcal{A}}$ and $Y_{\mathbf{d}, \mathcal{A}}$ respectively, for any $\mathbf{d} \in B_t (n, d)$ and $|\mathcal{A}| \geq \binom {n} {2} - t$. This initial estimate may then be gradually refined by iterating \Cref{recursive_estimates}, repeatedly plugging in the previous estimate to obtain an improved one.

Suppose we have estimates
\begin{align*}
    \tilde{P} & \colon \mathbb{Z}^n_{\geq 0} \times 2 ^ {\binom{[n]} {2}} \times [n]^2 \rightarrow \mathbb{R} , \\
    \tilde{Y} & \colon \mathbb{Z}^n_{\geq 0} \times 2 ^ {\binom{[n]} {2}} \times [n]^3 \rightarrow \mathbb{R} .
\end{align*}
We think of $\tilde{P}$ as receiving a degree sequence $\mathbf{d}$, some set $\mathcal{A}$, and two vertices $a, b$, written as $\tilde{P}_{\mathbf{d}, \mathcal{A}} (ab)$, and returning some approximation of $\mathds{P} [\{a, b\} \in G_{\mathbf{d}, \mathcal{A}} ]$. The function $\tilde{Y}$ is similar, but $\tilde{Y}_{\mathbf{d}, \mathcal{A}} (abc)$ takes in three vertices instead of two, and outputs an approximation for $\mathds{P} [\{a, b\}, \{b, c\} \in G_{\mathbf{d}, \mathcal{A}} ]$.

We then define the following operators:
\begin{align}
\mathcal{B}(\tilde{P}, \tilde{Y})_{\mathbf{d}, \mathcal{A}} (ab) & = \sum\limits_{c \in \mathcal{A}(a) \setminus \mathcal{A}(b)} \tilde{P}_{\mathbf{d}, \mathcal{A}} (ac) + \sum\limits_{c \in \mathcal{A}(a) \cap \mathcal{A}(b)} \tilde{Y}_{\mathbf{d}, \mathcal{A}} (acb), \label{b_operator} \\
\mathcal{P}(\tilde{P}, \tilde{Y})_{\mathbf{d}, \mathcal{A}} (ab) & = \mathbf{d} (b) \left( \sum\limits_{c \in \mathcal{A}^*(b)} \frac {\mathbf{d}(c) - \mathcal{B}(\tilde{P}, \tilde{Y})_{\mathbf{d} - \mathbf{e}_a - \mathbf{e}_b, \mathcal{A}} (ca)} {\mathbf{d}(a) - \mathcal{B}(\tilde{P}, \tilde{Y})_{\mathbf{d} - \mathbf{e}_b - \mathbf{e}_c, \mathcal{A}} (ac)} \cdot \frac {1 - \tilde{P}_{\mathbf{d} - \mathbf{e}_b - \mathbf{e}_c, \mathcal{A}}(bc)} {1 - \tilde{P}_{\mathbf{d} - \mathbf{e}_a - \mathbf{e}_b, \mathcal{A}} (ab)} \right)^{-1} , \label{p_operator} \\
\mathcal{Y}(\tilde{P}, \tilde{Y})_{\mathbf{d}, \mathcal{A}} (abc) & = \mathcal{P}(\tilde{P}, \tilde{Y})_{\mathbf{d}, \mathcal{A}} (ab) \frac {\mathcal{P}(\tilde{P}, \tilde{Y})_{\mathbf{d} - \mathbf{e}_a - \mathbf{e}_b, \mathcal{A}} (bc)-Y_{\mathbf{d} - \mathbf{e}_a - \mathbf{e}_b, \mathcal{A}} (abc)} {1 - \mathcal{P}(\tilde{P}, \tilde{Y})_{\mathbf{d} - \mathbf{e}_a - \mathbf{e}_b, \mathcal{A}} (ab)} \label{y_operator} .
\end{align}

Let $\tilde{P}, \tilde{Y}$ be a pair of estimates on all graphs with some $\mathcal{A}$ and any $\mathbf{d} \in B_t(n, d)$. Then, under certain conditions, replacing them with $\mathcal{P}(\tilde{P}, \tilde{Y}), \mathcal{Y}(\tilde{P}, \tilde{Y})$ will give us new estimates on the same $\mathcal{A}$ and any $\mathbf{d} \in B_{t-2}(n, d)$. Applying $\mathcal{P}, \mathcal{Y}$ to those new estimates will then give estimates for $\mathbf{d} \in B_{t-4} (n, d)$, and so on. The repeated application of $\mathcal{P}, \mathcal{Y}$ will yield better and better approximations of $P$ and $Y$, under certain conditions. In fact, it suffices to prove:

\begin{proposition}
\label{good_estimate}
There exists some constant $C$ so that the following holds. Let $n, d, t, \mathcal{A}$ be as in \Cref{first_estimate}. Also, let $(\tilde{P}_1, \tilde{Y}_1), (\tilde{P}_2, \tilde{Y}_2)$ be two estimates in $B_t(n, d)$ such that
\begin{equation*}
    \tilde{P}_i = (1 + o(1)) p, \tilde{Y}_i = (1 + o(1)) p^2 ,
\end{equation*}
for $i \in \{1,2\}$. If $\tilde{P}_1 = (1 \pm \xi) \tilde{P}_2, \tilde{Y}_1 = (1 \pm \xi) \tilde{Y}_2$, for some $0 \leq \xi \leq 1$, then
\begin{equation*}
    \mathcal{P}(\tilde{P}_1, \tilde{Y}_1) = (1 \pm C p \xi) \mathcal{P}(\tilde{P}_2, \tilde{Y}_2), \mathcal{Y}(\tilde{P}_1, \tilde{Y}_1) = (1 \pm C p \xi) \mathcal{Y}(\tilde{P}_2, \tilde{Y}_2) .
\end{equation*}
\end{proposition}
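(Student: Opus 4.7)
The plan is to propagate the multiplicative error $\xi$ through the three operators $\mathcal{B}$, $\mathcal{P}$, $\mathcal{Y}$ in sequence, taking advantage of the fact that wherever $\tilde{P}$ or a sum involving $\tilde{Y}$ appears in \eqref{b_operator}--\eqref{y_operator}, it is added to or subtracted from a dominant term larger by a factor of $\Theta(1/p)$. Each such occurrence dampens the relative error by a factor of $O(p)$, which is precisely the mechanism that produces the $C p \xi$ bound in the conclusion.

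First I would analyze $\mathcal{B}(\tilde{P}, \tilde{Y})$. The definition \eqref{b_operator} is linear in its two inputs, so in the hypothesis $(1-\xi)\tilde{P}_2 \le \tilde{P}_1 \le (1+\xi)\tilde{P}_2$ (and similarly for $\tilde{Y}$) the whole sum changes by the same factor, i.e.\ $\mathcal{B}(\tilde{P}_1,\tilde{Y}_1) = (1 \pm \xi)\,\mathcal{B}(\tilde{P}_2,\tilde{Y}_2)$. Second, I would pin down the size of $\mathcal{B}$: the $\mathcal{A}(a)\cap\mathcal{A}(b)$ sum has $(1+o(1))n$ terms each equal to $(1+o(1))p^2$, contributing $(1+o(1))\,dp$, while the $\mathcal{A}(a)\setminus\mathcal{A}(b)$ sum has at most $O(t) = o(d)$ terms of size $(1+o(1))p$, contributing $o(dp)$. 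So $\mathcal{B}(\tilde{P}_j,\tilde{Y}_j) = (1+o(1))\,dp$ for $j\in\{1,2\}$, uniformly.

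Next I would plug these estimates into the inner ratio of \eqref{p_operator},
\begin{equation*}
R(c) = \frac{\mathbf{d}(c) - \mathcal{B}(ca)}{\mathbf{d}(a) - \mathcal{B}(ac)} \cdot \frac{1 - \tilde{P}(bc)}{1 - \tilde{P}(ab)} .
\end{equation*}
Both $\mathbf{d}(c) - \mathcal{B}(ca)$ and $\mathbf{d}(a) - \mathcal{B}(ac)$ are $d(1-p+o(1)) = \Theta(d)$, and the absolute variation of $\mathcal{B}$ between the two estimates is at most $\xi \cdot O(dp)$; dividing gives relative error $O(p\xi)$. In the $(1-\tilde{P})$ ratio, $\tilde{P} = (1+o(1))p$ varies by at most $\xi \cdot O(p)$ in absolute terms while the denominator $1 - \tilde{P} = \Theta(1)$, producing relative error $O(p\xi)$ as well. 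Hence $R_1(c) = (1 \pm C_1 p\xi)\,R_2(c)$ uniformly in $c$; summing over $c \in \mathcal{A}^*(b)$ preserves the same multiplicative enclosure, as does taking the reciprocal and multiplying by $\mathbf{d}(b)$. This yields $\mathcal{P}(\tilde{P}_1,\tilde{Y}_1) = (1 \pm C_2 p\xi)\,\mathcal{P}(\tilde{P}_2,\tilde{Y}_2)$.

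Finally, for $\mathcal{Y}$, I would reuse the same mechanism on \eqref{y_operator}. The prefactor $\mathcal{P}(ab)$ already contributes a $(1 \pm C_2 p\xi)$ factor by the previous step. In the fraction $(\mathcal{P}(bc) - Y(abc))/(1 - \mathcal{P}(ab))$ the $Y$ term is independent of the estimate and so cancels out of the comparison; $\mathcal{P}(bc) = \Theta(p)$ varies by $O(p\cdot p\xi)$ and $Y(abc) = \Theta(p^2)$ is fixed, so the numerator and denominator vary by relative amounts $O(p\xi)$ and $O(p^2\xi)$, respectively. Combining gives $\mathcal{Y}(\tilde{P}_1,\tilde{Y}_1) = (1 \pm C p\xi)\,\mathcal{Y}(\tilde{P}_2,\tilde{Y}_2)$ for some absolute constant $C$. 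The main obstacle is the numerator $\mathbf{d}(c) - \mathcal{B}(ca)$ in \eqref{p_operator}: a naive bound only transfers the full $\xi$ error, and the key step is to exploit the size separation $\mathcal{B} = O(dp) \ll d = \Theta(\mathbf{d}(c))$ to convert this into $O(p\xi)$. Verifying this size comparison uniformly in $\mathbf{d} \in B_t(n,d)$ and $\mathcal{A}$ with at most $t$ missing edges — in particular controlling the $\mathcal{A}(a)\setminus\mathcal{A}(b)$ contribution — relies crucially on the hypotheses $t \ll d$ and $d \leq cn$.
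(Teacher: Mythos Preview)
Your proposal is correct and follows essentially the same route as the paper: propagate the $(1\pm\xi)$ factor through $\mathcal{B}$ by linearity, use the size bound $\mathcal{B}=O(dp)$ against $\mathbf{d}(\cdot)=\Theta(d)$ to damp the relative error in each summand of \eqref{p_operator} to $O(p\xi)$, then sum, invert, and feed through \eqref{y_operator}. The paper's proof is terser but relies on exactly the same mechanism you isolate, including the crucial observation that $d\leq cn$ keeps all denominators bounded away from zero.
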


\begin{proof}
This is proven as Lemma 5.1 in \cite{liebenauwormald2017}. However, small modifications are required, so we state the proof here for completeness.

The assumptions on $\tilde{P}_1, \tilde{Y}_1, \tilde{P}_2, \tilde{Y}_2$ imply that, when applying \Cref{b_operator} to the estimates, we will have $\mathcal{B}(\tilde{P}_1, \tilde{Y}_1) = (1 \pm \xi) \mathcal{B}(\tilde{P}_2, \tilde{Y}_2)$. Additionally, for all $\mathbf{d} \in B_t (n, d)$, we have $\mathcal{B}(\tilde{P}_2, \tilde{Y}_2) \leq C_1 d p$. Therefore $\mathcal{B}(\tilde{P}_1, \tilde{Y}_1) = \mathcal{B}(\tilde{P}_2, \tilde{Y}_2) + C_1 d p \xi$.

Applying \Cref{p_operator}, we see that each summand of $\mathcal{P}(\tilde{P}_1, \tilde{Y}_1)$ is $(1 + o(1))$, and differs from the corresponding summand of $\mathcal{P}(\tilde{P}_2, \tilde{Y}_2)$ by a factor of $(1 \pm C_2 p \xi)$, as long as $p < c$ for some $c$ small enough that the denominators are bounded away from zero.

Therefore, the respective sums only differ by a $(1 \pm C_2 p \xi)$ factor, and its inverse differs by at most $(1 \pm C_3 p \xi)$ for $c$ small enough. Since $\mathbf{d}(b)$ remains the same for the two estimates, we see that $\mathcal{P}(\tilde{P}_1, \tilde{Y}_1) = (1 \pm C_3 p \xi) \mathcal{P}(\tilde{P}_2, \tilde{Y}_2)$.

Plugging this into \Cref{y_operator}, and again using the fact that the denominator is bounded away from zero, we see that $\mathcal{Y}(\tilde{P}_1, \tilde{Y}_1) = (1 \pm C_4 p \xi) \mathcal{Y}(\tilde{P}_2, \tilde{Y}_2)$ as well.
\end{proof}

\Cref{good_estimate} essentially suggests that any two approximations grow closer upon repeated application of $\mathcal{P}, \mathcal{Y}$. Note that the actual probabilities $\tilde{P} = P, \tilde{Y} = Y$ must be a fixed point of those iterations, and therefore repeated application of $\mathcal{P}, \mathcal{Y}$ will converge to $P, Y$ from any ``reasonable'' starting point.

This also suggests that $P, Y$ can be estimated by approximating the fixed point of $\mathcal{P}$ and $\mathcal{Y}$. If we show that plugging in ``reasonable'' $\tilde{P}, \tilde{Y}$ within a certain range returns new estimates still within that range, then \Cref{good_estimate} implies that the fixed point $P, Y$ must be within that range as well.

Such approximation is explicitly carried out in Appendix A of \cite{liebenauwormald2017} for $\mathcal{A} = \binom {[n]} {2}$ and $\mathbf{d} \in B_t (n, d)$ for $t \ll \sqrt{d}$. Here we need a slightly more general claim:

\begin{claim}
\label{p_estimate}
For any $\binom {n} {2} - t \leq |\mathcal{A}| \leq \binom {n} {2}$ and $\mathbf{d} \in B_t (n, d)$ for $t \ll \sqrt{d}$, the estimates
\begin{equation*}
    \tilde{P}_{\mathbf{d}, \mathcal{A}} (ab) = \frac {(n - 1) \mathbf{d}(a) \mathbf{d}(b)} {d |\mathcal{A}(a)| |\mathcal{A}(b)|} ,
\end{equation*}
\begin{equation*}
    \tilde{Y}_{\mathbf{d}, \mathcal{A}} (abc) = \frac {(n - 1)^2 \mathbf{d}(a) \mathbf{d}(b) (\mathbf{d}(b) - 1) \mathbf{d}(c)} {d^2 |\mathcal{A}(a)| |\mathcal{A}(b)| (|\mathcal{A}(b)| - 1) |\mathcal{A}(c)|} ,
\end{equation*}
approximate $P, Y$ up to a multiplicative factor of
\begin{equation*}
    \left(1 + \mathcal{O} \left( \frac {t^2} {n d} \right) \right) ,
\end{equation*}
for any $\{a, b\}, \{b, c\} \in \mathcal{A}$.
\end{claim}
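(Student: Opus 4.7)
The plan is to show that the proposed formulas $\tilde{P}_{\mathbf{d}, \mathcal{A}}, \tilde{Y}_{\mathbf{d}, \mathcal{A}}$ are approximate fixed points of the operators $\mathcal{P}, \mathcal{Y}$ defined in \Cref{p_operator,y_operator}, with multiplicative error $\mathcal{O}(t^2/(nd))$, and then to use the contraction property of \Cref{good_estimate} to transfer this bound to the true fixed point $(P, Y)$.

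First I would establish a rough initial estimate placing $(P, Y)$ near $(\tilde P, \tilde Y)$. \Cref{first_estimate} gives $P_{\mathbf{d}, \mathcal{A}}(ab) \leq (1 + 4c) d/n$, and combining with \Cref{conditioning_smaller_graph} yields $Y_{\mathbf{d}, \mathcal{A}}(abc) = \mathcal{O}(p^2)$. Since $t \ll \sqrt{d} \ll n$ forces $\mathbf{d}(a) = d(1 + o(1))$ and $|\mathcal{A}(a)| = (n-1)(1 + o(1))$, we also have $\tilde{P} = (1 + o(1)) p$ and $\tilde{Y} = (1 + o(1)) p^2$. So $(P, Y)$ and $(\tilde P, \tilde Y)$ agree within a constant factor. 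A bounded number of preliminary applications of $\mathcal{P}, \mathcal{Y}$, analyzed via \Cref{good_estimate}, shrinks this constant gap to an $o(1)$ gap, so that $(P, Y)$ itself lies in the $(1 + o(1))(p, p^2)$ regime required by the hypothesis of \Cref{good_estimate}.

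The core computational step is to verify $\mathcal{P}(\tilde{P}, \tilde{Y}) = (1 + \mathcal{O}(t^2/(nd))) \tilde{P}$ and $\mathcal{Y}(\tilde{P}, \tilde{Y}) = (1 + \mathcal{O}(t^2/(nd))) \tilde{Y}$. The specific shape of $\tilde{P}, \tilde{Y}$ causes the $c$-sums in $\mathcal{B}(\tilde{P}, \tilde{Y})_{\mathbf{d}, \mathcal{A}}(ab)$ to factor into a term in $a, b$ alone times a telescoping sum involving $\mathbf{d}(c)/|\mathcal{A}(c)|$. Since $\mathcal{A}$ misses at most $t$ edges and $\mathbf{d} \in B_t(n,d)$, an elementary accounting yields identities like $\sum_{c \in \mathcal{A}^*(b)} \mathbf{d}(c) = d |\mathcal{A}(b)|/(n-1) + \mathcal{O}(t)$ that reduce the sums to the correct leading order. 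Substituting the resulting expression into \Cref{p_operator} and tracking the shifts $\mathbf{d} - \mathbf{e}_a - \mathbf{e}_b$, $\mathbf{d} - \mathbf{e}_b - \mathbf{e}_c$ and the removal of edges from $\mathcal{A}$ (each a perturbation of size $\mathcal{O}(1/n + 1/d)$), the first-order errors of size $\mathcal{O}(t/n)$ cancel by design, leaving only the claimed $\mathcal{O}(t^2/(nd))$ residual. The analogous calculation for $\mathcal{Y}$ follows immediately from \Cref{recursive_y}.

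To conclude, write $\tilde{P} = (1 \pm \xi) P, \tilde{Y} = (1 \pm \xi) Y$ for the smallest such $\xi = o(1)$. Since $P, Y$ are fixed points of $\mathcal{P}, \mathcal{Y}$, \Cref{good_estimate} gives $\mathcal{P}(\tilde{P}, \tilde{Y}) = (1 \pm C p \xi) P$; combined with the approximate fixed point estimate, $(1 + \mathcal{O}(t^2/(nd)))(1 \pm \xi) = 1 \pm C p \xi$, which solves to $\xi = \mathcal{O}(t^2/(nd))/(1 - C p) = \mathcal{O}(t^2/(nd))$ since $p$ is bounded away from $1$. The main obstacle is the approximate fixed point computation: the form of $\tilde{P}, \tilde{Y}$ is engineered precisely so that the leading $\mathcal{O}(t/n)$ errors from the non-uniformity of $\mathbf{d}$ and $\mathcal{A}$ cancel. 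Verifying these cancellations cleanly, especially with $|\mathcal{A}| < \binom{n}{2}$, follows the pattern of Appendix A of \cite{liebenauwormald2017} with adaptations to accommodate the missing edges.
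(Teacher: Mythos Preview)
Your proposal is correct and follows the paper's approach exactly: verify that $(\tilde P, \tilde Y)$ is an approximate fixed point of $(\mathcal P, \mathcal Y)$ with multiplicative error $\mathcal{O}(t^2/(nd))$ by carrying out the explicit computation of $\mathcal{B}$ and the summands of \Cref{p_operator} (the content of Appendix~\ref{first_order_estimate}, patterned on Liebenau--Wormald), and then invoke the contraction of \Cref{good_estimate} against the genuine fixed point $(P,Y)$ to close the gap. One cosmetic slip: your illustrative identity $\sum_{c \in \mathcal{A}^*(b)} \mathbf{d}(c) = d\,|\mathcal{A}(b)|/(n-1) + \mathcal{O}(t)$ is off by a factor of~$n$ (the sum is $\approx nd$), and the paper handles the $c$-sum not by telescoping but by observing that all but $\mathcal{O}(t)$ vertices $c$ have $d_c = d$, $n_c = n-1$ --- but this does not affect the substance of your argument.
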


While the proof is analogous to the $\mathcal{A} = \binom {[n]} {2}$ case, it is presented in \Cref{first_order_estimate} for completeness.

The following Corollary appears in many papers (e.g.\ \cite{mkcay1985, ksv2007, mckay2011, gaoohapkin2020}) but only for some subset of the possible $d$ values that are relevant to us, or only for specific cases, such as $\mathcal{B} = \emptyset$ or $\mathcal{C} = \emptyset$. We thus prove it here in the general setting we need:

\begin{corollary}
\label{joint_probability_estimate}
Given disjoint $\mathcal{B},\mathcal{C} \subseteq \binom {[n]} {2}$ with $|\mathcal{B} \cup \mathcal{C}| \leq k$, and a random $d$-regular graph $G$, such that $d \leq c n$ and $0 < k \ll \sqrt{d}$, then
\begin{equation*}
    \mathds{P} [ \mathcal{B} \subseteq E(G), \mathcal{C} \cap E(G) = \emptyset] = \left( \frac {d} {n} \right)^{|\mathcal{B}|} \left( 1 - \frac {d} {n} \right)^{|\mathcal{C}|} \left(1 + \mathcal{O} \left( \frac {k^3} {n d} \right) \right) .
\end{equation*}
\end{corollary}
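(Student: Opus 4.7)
The plan is to use \Cref{conditioning_smaller_graph} to unfold the joint probability as a product of single-edge conditional probabilities, apply \Cref{p_estimate} to approximate each factor, and combine the resulting estimates with careful error accounting.

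First, iterating \Cref{conditioning_smaller_graph} once per edge in $\mathcal{B} \cup \mathcal{C}$, in any fixed order, yields
\begin{equation*}
    \mathds{P}[\mathcal{B} \subseteq E(G), \mathcal{C} \cap E(G) = \emptyset] = \prod_{e \in \mathcal{B}} P_{\mathbf{d}^{(e)}, \mathcal{A}^{(e)}}(e) \prod_{e \in \mathcal{C}} \left(1 - P_{\mathbf{d}^{(e)}, \mathcal{A}^{(e)}}(e)\right) ,
\end{equation*}
where $\mathbf{d}^{(e)}$ subtracts $\mathbf{e}_a + \mathbf{e}_b$ for each previously processed $\mathcal{B}$-edge $\{a,b\}$, and $\mathcal{A}^{(e)}$ is $\binom{[n]}{2}$ minus the previously processed edges. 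Since at most $|\mathcal{B} \cup \mathcal{C}| \leq k$ edges are ever processed, at every step $\mathbf{d}^{(e)} \in B_k(n, d)$ and $\mathcal{A}^{(e)}$ is missing at most $k$ edges, well within the hypotheses of \Cref{p_estimate} given $k \ll \sqrt{d}$.

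Next, I apply \Cref{p_estimate} to each individual factor: each $P_{\mathbf{d}^{(e)}, \mathcal{A}^{(e)}}(e)$ equals $\tilde{P}_{\mathbf{d}^{(e)}, \mathcal{A}^{(e)}}(e) \cdot (1 + \mathcal{O}(k^2/(nd)))$, with the explicit estimate $\tilde{P}_{\mathbf{d}, \mathcal{A}}(ab) = (n-1)\mathbf{d}(a)\mathbf{d}(b)/(d|\mathcal{A}(a)||\mathcal{A}(b)|)$. Multiplying these relative errors over the at most $k$ factors yields a combined discrepancy of $(1 + \mathcal{O}(k^2/(nd)))^k = 1 + \mathcal{O}(k^3/(nd))$, which already matches the target error. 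It thus suffices to show that the product of the explicit $\tilde{P}$-values (and their complementary $1 - \tilde{P}$ terms for $\mathcal{C}$-edges) matches $(d/n)^{|\mathcal{B}|}(1 - d/n)^{|\mathcal{C}|}$ up to the same order of error.

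For this last step, I write $\mathbf{d}^{(e)}(v) = d - \beta_v^{(e)}$ and $|\mathcal{A}^{(e)}(v)| = n - 1 - \alpha_v^{(e)}$, where $\alpha_v^{(e)} \leq k$ counts all previously processed edges at $v$ and $\beta_v^{(e)} \leq k$ counts only those in $\mathcal{B}$. The estimate becomes
\begin{equation*}
    \tilde{P}_{\mathbf{d}^{(e)}, \mathcal{A}^{(e)}}(ab) = \frac{d}{n-1} \cdot \frac{(1 - \beta_a^{(e)}/d)(1 - \beta_b^{(e)}/d)}{(1 - \alpha_a^{(e)}/(n-1))(1 - \alpha_b^{(e)}/(n-1))} .
\end{equation*}
Taking logarithms and summing over $e$, the first-order deficit contributions take the form $\sum_v \binom{D_v}{2}$-type sums divided by $d$ or by $n - 1$, where $D_v$ is the number of edges of $\mathcal{B} \cup \mathcal{C}$ at $v$; using $\sum_v D_v \leq 2k$ and $D_v \leq k$ these are bounded by $\mathcal{O}(k^2/d)$ and $\mathcal{O}(k^2/n)$, with higher-order Taylor remainders contributing $\mathcal{O}(k^3/d^2)$.

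The main obstacle lies precisely here: individually, many of these corrections (the first-order deficit sums of order $k^2/d$, the $1/(n-1)$-versus-$1/n$ discrepancy of order $k/n$, etc.) are strictly larger than the target $\mathcal{O}(k^3/(nd))$ in the regime $k \ll \sqrt{d}$, so triangle-inequality bounds alone are insufficient. One must exploit the symmetric structure of $\tilde{P}$ --- in particular, the way the same deficit variables appear in numerators and denominators, and the interplay between $\mathcal{B}$-factors and complementary $(1 - \tilde{P})$-factors for $\mathcal{C}$-edges --- so that the dominant corrections telescope or cancel to leading order, leaving only the desired $\mathcal{O}(k^3/(nd))$ residual.
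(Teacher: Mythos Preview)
Your approach is exactly the paper's: unfold via \Cref{conditioning_smaller_graph}, apply \Cref{p_estimate} to each factor, and multiply. You have also been more careful than the paper at the final step, and the ``obstacle'' you flag is real --- but the resolution you propose does not work.

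Concretely, the hoped-for cancellation does not exist. Take $\mathcal{B}=\{\{1,2\},\{1,3\}\}$ and $\mathcal{C}=\emptyset$. By symmetry (or directly from $\sum_{i\neq j}\mathds{P}[\{1,i\},\{1,j\}\in G]=d(d-1)$) one has exactly
\[
\mathds{P}[\{1,2\},\{1,3\}\in G]=\frac{d(d-1)}{(n-1)(n-2)},
\]
whose ratio to $(d/n)^2$ is $1-\tfrac{1}{d}+\tfrac{3}{n}+\mathcal{O}\!\left(\tfrac{1}{nd}+\tfrac{1}{n^2}\right)$. With $k=2$ this deviation is of order $1/d$, far larger than $k^3/(nd)=8/(nd)$. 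So no amount of telescoping between $\tilde P$ numerators and denominators can bring the error down to $\mathcal{O}(k^3/(nd))$; the deficit $\sum_v\binom{\deg_{\mathcal{B}}(v)}{2}/d$ you wrote down is a genuine, uncancelled contribution.

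The paper's own proof has the same gap: its final displayed line asserts $(1+\mathcal{O}(k/d))^k=1+\mathcal{O}(k^3/(nd))$, which is simply false (the left side is $1+\mathcal{O}(k^2/d)$, strictly larger). What both arguments actually establish is
\[
\mathds{P}[\mathcal{B}\subseteq E(G),\ \mathcal{C}\cap E(G)=\emptyset]=(d/n)^{|\mathcal{B}|}(1-d/n)^{|\mathcal{C}|}\bigl(1+\mathcal{O}(k^2/d)\bigr),
\]
and this weaker bound is all that is ever used downstream (in \Cref{all_repeating_t_bound} one only needs the error to be $o(1)$, which follows from $k^2\ll d$). So your write-up is correct up to and including the $\mathcal{O}(k^2/d)$ accounting; just stop there and note that the stated $\mathcal{O}(k^3/(nd))$ is too optimistic as written.
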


\begin{proof}
As noted in \Cref{conditioning_smaller_graph}, conditioning a random graph with possible edges $\mathcal{A}$ and given degree sequence $\mathbf{d}$ on the existence or non-existence of $ab \in \mathcal{A}$ is equivalent to removing it from $\mathcal{A}$, and also replacing $\mathbf{d}$ with $\mathbf{d} - \mathbf{e}_a - \mathbf{e}_b$ if we condition on its existence.

Therefore, we may start from the full graph $G$ without any conditioning, and repeatedly apply \Cref{p_estimate} to obtain the probability that some edge $e_1 = \{v_1, u_1 \} \in \mathcal{B}$ appears. Conditioning on that appearance, the conditional probability that $e_2 \in \mathcal{B}$ also appears is equivalent to the probability for it to appear in another graph, with $\mathcal{A} = \binom{[n]} {2} \setminus \{e_1 \}$ and $\mathbf{d}$ replaced by $\mathbf{d} - \mathbf{e}_{v_1} - \mathbf{e}_{v_2}$. Therefore, \Cref{p_estimate} applies once again. We can repeat that process until all of $\mathcal{B} \cup \mathcal{C}$ has been conditioned on.

At each step, the estimate we use may have a relative error of at most
\begin{equation*}
    \mathcal{O} \left( \frac {k^2} {n d} \right) .
\end{equation*}
When conditioning on an edge in $\mathcal{C}$, we multiply by $1-P$ instead of $P$, but since $d / n < c$, we may choose small enough $c$ and make sure that the relative error is even smaller in that case.

The resulting conditional probability estimate is therefore $(d + \mathcal{O}(k)) / n$ for each $e \in \mathcal{B}$ and $(n - d + \mathcal{O}(k)) / n$ for each $e \in \mathcal{C}$. Multiplying all of them yields 
\begin{equation*}
\left( \frac {d} {n} \right)^{|\mathcal{B}|} \left( 1 - \frac {d} {n} \right)^{|\mathcal{C}|} \left(1 + \mathcal{O} \left( \frac {k} {d} \right) \right)^{k} = \left( \frac {d} {n} \right)^{|\mathcal{A}|} \left( 1 - \frac {d} {n} \right)^{|\mathcal{C}|} \left(1 + \mathcal{O} \left( \frac {k^3} {n d} \right) \right) ,
\end{equation*}
since $k \ll \sqrt{d} \leq \sqrt{n}$.
\end{proof}

\section{Contribution of Walks}
\label{section_contribution}

As explained in \Cref{our_results_section},
\begin{equation*}
    \Tr \left[ (A - p J + p I)^k \right] = \sum\limits_{\Gamma} \prod\limits_{e \in \Gamma} (\mathds{1}_{e \in G}- p) ,
\end{equation*}
where the sum is over all closed walks $\Gamma$ of length $k$ on $[n]$. Here the product is over $\Gamma$ as a multiset --- if $e_i$ appears in $\Gamma$ with multiplicity $m_i$, it also appears $m_i$ times in the product. The expectation of this trace is the sum of expectations of such terms.

In this section, we will calculate a bound for the order of magnitude of the expectation
\begin{equation*}
    M_{\Gamma} = \mathds{E} \left[ \prod\limits_{e \in \Gamma} (\mathds{1}_{e \in G} - p) \right],
\end{equation*}
for each such walk $\Gamma$.

Throughout this section, $\Gamma$ is a walk of length $k = k(n)$. We make a distinction between \textit{non-repeating edges} that appear in $\Gamma$ only once, and \textit{repeating edges} that appear at least twice.

\subsection{Walks with All Edges Repeating}

We begin by bounding $|M_{\Gamma}|$ for walks with $t$ distinct edges, where every edge $e_i$ appears $m_i > 1$ times. Each term $(\mathds{1}_{e_i \in G}- p)^m$ is $(1-p)^m$ if $e \in G$ and $(-p)^m$ otherwise. Therefore, if $e_1, \ldots, e_t$ are the edges of $\Gamma$, then
\begin{equation}
\label{naive_t_bound}
    |M_{\Gamma}| \leq \sum\limits_{S \subseteq [t]} (1 - p)^{m_S} p^{k - m_S} \mathds{P} [ \bigcap\limits_{i \in S} \{ e_i \in G \} \cap \bigcap\limits_{i \in [t] \setminus S} \{ e_i \notin G \} ],
\end{equation}
where $m_S = \sum_{i \in S} m_i$.

\begin{claim}
\label{all_repeating_t_bound}
If $t^2 \ll d$ and $\Gamma$ is a walk with $t$ distinct edges, where every edge is repeated at least twice, then $|M_{\Gamma}| \leq (1 + o(1)) [p (1-p)] ^ {t}$.
\end{claim}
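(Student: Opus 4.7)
The plan is to start from the bound (2.5) and apply the precise joint probability estimate from \Cref{joint_probability_estimate}, then notice that the resulting expression factorizes cleanly over the $t$ distinct edges.

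Concretely, let $e_1,\dots,e_t$ be the distinct edges of $\Gamma$ with multiplicities $m_1,\dots,m_t \geq 2$, so $m_1+\cdots+m_t = k$. Since the $e_i$ are distinct edges of $K_n$ and $t \ll \sqrt{d}$ (from the hypothesis $t^2 \ll d$), I can apply \Cref{joint_probability_estimate} to every term in (2.5) to get
\begin{equation*}
    \mathds{P}\Bigl[\bigcap_{i \in S}\{e_i \in G\} \cap \bigcap_{i \notin S}\{e_i \notin G\}\Bigr] = \left(\frac{d}{n}\right)^{|S|}\!\left(1-\frac{d}{n}\right)^{t-|S|}\!\left(1+\mathcal{O}\!\left(\frac{t^3}{nd}\right)\right),
\end{equation*}
with a uniform relative error $1+o(1)$ (using $t^3/(nd) \leq t/n \leq 1/\sqrt{n} = o(1)$). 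Since $p = d/(n-1) = (d/n)(1+\mathcal{O}(1/n))$, the $t$-fold product incurs an additional $1+\mathcal{O}(t/n)=1+o(1)$ factor, and I may freely replace $d/n$ by $p$ inside.

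The second step is purely algebraic: plug this estimate into (2.5) and rearrange. Writing $m_S = \sum_{i \in S} m_i$, the summand becomes
\begin{equation*}
    (1-p)^{m_S+t-|S|}\, p^{k-m_S+|S|} = \prod_{i \in S}\bigl[(1-p)^{m_i}\, p\bigr]\cdot \prod_{i \notin S}\bigl[p^{m_i}(1-p)\bigr],
\end{equation*}
so summing over all $S \subseteq [t]$ factorizes as
\begin{equation*}
    \sum_{S \subseteq [t]} \prod_{i \in S}\bigl[(1-p)^{m_i} p\bigr]\prod_{i \notin S}\bigl[p^{m_i}(1-p)\bigr] = \prod_{i=1}^{t}\Bigl[(1-p)^{m_i} p + p^{m_i}(1-p)\Bigr].
\end{equation*}

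Finally, I extract $p(1-p)$ from each factor:
\begin{equation*}
    (1-p)^{m_i} p + p^{m_i}(1-p) = p(1-p)\bigl[(1-p)^{m_i-1} + p^{m_i-1}\bigr].
\end{equation*}
Because $m_i \geq 2$ and $0 \leq p \leq 1$, one has $(1-p)^{m_i-1}+p^{m_i-1} \leq (1-p)+p = 1$, so each factor is at most $p(1-p)$. Multiplying the $t$ factors and combining with the $(1+o(1))$ error from the first step yields $|M_\Gamma| \leq (1+o(1))[p(1-p)]^t$, as required.

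There is no real obstacle here — the hypothesis $t^2 \ll d$ is exactly what makes \Cref{joint_probability_estimate} applicable with a negligible relative error, and the rest is the algebraic observation that the sum over $S$ factorizes because each edge contributes independently in (2.5). The condition $m_i \geq 2$ is used only at the very last step, to bound $(1-p)^{m_i-1}+p^{m_i-1} \leq 1$; if some $m_i$ equaled $1$, that factor would be $(1-p)^0+p^0 = 2$, which is why walks with non-repeating edges must be treated separately later.
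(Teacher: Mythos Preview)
Your proof is correct and follows essentially the same approach as the paper's: apply \Cref{joint_probability_estimate} to each term of \eqref{naive_t_bound}, factorize the resulting sum over $S$ as $\prod_i [p(1-p)^{m_i}+(1-p)p^{m_i}]$, and bound each factor by $p(1-p)$ using $m_i\ge 2$. The only slip is the equation reference: the bound you invoke is \eqref{naive_t_bound} (in \Cref{section_contribution}), not (2.5).
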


\begin{proof}
We begin with the bound given in \Cref{naive_t_bound}, and estimate the various probabilities used there. By \Cref{joint_probability_estimate}, for any given $S \subseteq [t]$, we have
\begin{equation*}
    \mathds{P} [ \bigcap\limits_{i \in S} \{ e_i \in G \} \cap \bigcap\limits_{i \in [t] \setminus S} \{ e_i \notin G \} ] = \left(1 + \mathcal{O} \left(\frac {t^3} {n d} \right) \right) p^{|S|} (1 - p)^{t - |S|} = (1 + o(1)) p^{|S|} (1 - p)^{t - |S|} ,
\end{equation*}
since $t^2 \ll d \leq n$.

This shows that
\begin{equation*}
    |M_{\Gamma}| \leq (1 + o(1)) \prod\limits_{i=1}^{t} \left[p (1-p)^{m_i} + (1-p) p^{m_i} \right] .
\end{equation*}
Since $m_i \geq 2$, each term in the product above is at most $p (1-p)^2 + (1-p) p^2 = p(1-p)$, and therefore the contribution of any such walk with $t$ distinct edges is at most $(1 + o(1)) \left[ p (1-p) \right]^t$.
\end{proof}

\subsection{Walks with Some Edges Repeating}
\label{mixed_walks}

When considering walks $\Gamma$ where some edges appear only once, the task of estimating $M_{\Gamma}$ is significantly more complex, since a lot of accurate cancellation is needed when expanding the product of $\mathds{1}_e - p$ terms. This requires a finer estimate than what can be provided by \Cref{first_estimate} or even by \Cref{p_estimate}.

The reasoning behind this is that, when expanding the product of the various $\mathds{1}_e - p$ terms, we see exponentially many factors. Most of those factors should cancel each other out when taking the expected value. If the various edges were independent, the expected product would trivially be $0$, so we may hope that when the edges are ``nearly independent'' the product will still be small.

Consider, then, a closed walk $\Gamma$ with $t$ distinct edges. Denote them by $e_1, \ldots, e_t$ and their multiplicities by $m_1, \ldots, m_t$ like before. Let $T_1 = \{ i : m_i = 1 \}$ and $T_2 = \{ i : m_i > 1 \}$. We denote by $t_1, t_2$ the sizes of $T_1, T_2$ respectively, and use $k_1, k_2$ as shorthand for $m_{T_1}, m_{T_2}$ respectively. In particular, $k_1 = t_1$.

Now, as before,
\begin{equation*}
    M_{\Gamma} = \sum\limits_{S \subseteq [t]} (1-p)^{m_S} (-p)^{k - m_S} \mathds{P} [ \bigcap\limits_{i \in S} \{ e_i \in G \} \cap \bigcap\limits_{i \in [t] \setminus S} \{ e_i \notin G \} ] .
\end{equation*}
We can split the sum according to the intersections $S_1 = S \cap T_1$ and $S_2 = S \cap T_2$:
\begin{equation*}
    M_{\Gamma} = \sum\limits_{S_2 \subseteq T_2} (1-p)^{m_{S_2}} (-p)^{k_2 - m_{S_2}} \mathds{P} [ \bigcap\limits_{i \in S_2} \{ e_i \in G \} \cap \bigcap\limits_{i \in T_2 \setminus S_2} \{ e_i \notin G \} ] \cdot I_{S_2},
\end{equation*}
with the inner sum
\begin{equation*}
    I_{S_2} = \sum\limits_{S_1 \subseteq T_1} (1-p)^{|S_1|} (-p)^{k_1 - |S_1|} \mathds{P} [ \bigcap\limits_{i \in S_1} \{ e_i \in G \} \cap \bigcap\limits_{i \in T_1 \setminus S_1} \{ e_i \notin G \} \mid \bigcap\limits_{i \in S_2} \{ e_i \in G \} \cap \bigcap\limits_{i \in T_2 \setminus S_2} \{ e_i \notin G \} ],
\end{equation*}
since $m_{S_1} = |S_1|$ for any $S_1 \subseteq T_1$.

Taking the absolute value of each summand of the outer sum, we get:
\begin{equation*}
    |M_{\Gamma}| \leq \sum\limits_{S_2 \subseteq T_2} (1-p)^{m_{S_2}} p^{k_2 - m_{S_2}} \mathds{P} [ \bigcap\limits_{i \in S_2} \{ e_i \in G \} \cap \bigcap\limits_{i \in T_2 \setminus S_2} \{ e_i \notin G \} ] \cdot |I_{S_2}|.
\end{equation*}
Given a bound $|I_{S_2}| \leq B_{\Gamma}$ for all possible inner sums, the total sum would then be bounded from above by
\begin{equation*}
    B_{\Gamma} \cdot \sum\limits_{S_2 \subseteq T_2} (1-p)^{m_{S_2}} p^{k_2 - m_{S_2}} \mathds{P} [ \bigcap\limits_{i \in S_2} \{ e_i \in G \} \cap \bigcap\limits_{i \in T_2 \setminus S_2} \{ e_i \notin G \} ] .
\end{equation*}
Now, the same argument of \Cref{all_repeating_t_bound} may be applied to the sum. Indeed, the proof of the claim can be applied to any arbitrary multiset of edges, as long as each edge repeats at least twice. Thus, such a universal bound $B_{\Gamma}$ on all the inner sums would yield a bound $|M_{\Gamma}| \leq (1 + o(1)) B_{\Gamma} [p (1-p)] ^ {t_2}$.

The heart of the matter is a derivation of such a bound $B_{\Gamma}$ on the inner sum, only involving the non-repeating edges. Note that we condition on $e_i \in G$ for all $i \in S_2$ and $e_i \notin G$ for all $i \in T_2 \setminus S_2$. This is equivalent to slightly modifying $\mathbf{d}$ and $\mathcal{A}$, as explained in \Cref{conditioning_smaller_graph}.

Thus, as long as we allow for such small modifications of $\mathcal{A}$ and $\vec{d}$, it suffices to bound
\begin{equation*}
    \sum\limits_{S_1 \subseteq T_1} (1-p)^{|S_1|} p^{k_1 - |S_1|} \mathds{P} [ \bigcap\limits_{i \in S_1} \{ e_i \in G \} \cap \bigcap\limits_{i \in T_1 \setminus S_1} \{ e_i \notin G \} \mid \bigcap\limits_{i \in S_2} \{ e_i \in G \} \cap \bigcap\limits_{i \in T_2 \setminus S_2} \{ e_i \notin G \} ]
\end{equation*}
for various $S_2 \subseteq T_2$. In summary, the general case reduces to the case of $\Gamma$ without any repeating edges, in a slightly modified probability space. Note, however, that after moving to a subset $T_1$ of the edges, what remains of $\Gamma$ is not necessarily a walk, but an arbitrary collection of unique edges.

\subsection{Walks without Repeating Edges}

We now assume for simplicity that $\Gamma$ is just a sequence of $t$ unique edges $e_1, \ldots, e_t$. We cannot assume that consecutive edges are incident to one another, since they may actually be separated by repeating edges from $T_2$ that we ``forgot'' about. We may also be working with a slightly modified probability space, obtained by conditioning on the state of at most $k$ edges.

For any $1 \leq i \leq t$ we can now define $p_i \colon \{0, 1\}^{i-1} \rightarrow \mathds{R}$ by
\begin{equation*}
    p_i(x_1, \ldots, x_{i-1}) = \mathds{P} [e_i \in G \mid \bigcap\limits_{\substack{1 \leq j < i \\ x_j = 1}} \{ e_j \in G \} \cap \bigcap\limits_{\substack{1 \leq j < i \\ x_j = 0}} \{ e_j \notin G \}] .
\end{equation*}
When $i$ is clear from the context, we will similarly think of other graph quantities as functions on the cube $\{ 0, 1 \} ^ {i-1}$, and by abuse of notation we freely apply $\chi_S$ to such quantities. For example, when talking about $\chi_S (Y_{\mathbf{d}, \mathcal{A}} (abc))$, we think of $Y_{\mathbf{d}, \mathcal{A}} (abc)$ as a binary function receiving $(x_1, \ldots, x_{i-1})$ and returning the quantity $Y(abc)$ on the modified graph obtained by conditioning on $e_j \in G$ if $x_j = 1$, and $e_j \notin G$ otherwise.

We can thus measure the effect that previous edges have on the next edge using $\chi_S(p_i)$ for any $S \subseteq [i-1]$. Thus, we may hope for the contribution of the walk to be small when the Fourier coefficients $\chi_S$ are small. This relationship is explicitly quantified in the following proposition:

\begin{proposition}
\label{chi_expansion}
\begin{equation*}
    \mathds{E} \left[ \prod\limits_{i=1}^t (\mathds{1}_{e_i \in G} - p) \right] = \sum\limits_{\substack {S_1, \ldots, S_t \\ S_i \subseteq [i-1]}} \prod\limits_{i=1}^{t} (1 - p \cdot \mathds{1}_{i \in \bigcup\limits_{j > i} S_j}) \chi_{S_i} ( p_i - p \cdot \mathds{1}_{i \notin \bigcup\limits_{j > i} S_j} ) .
\end{equation*}
\end{proposition}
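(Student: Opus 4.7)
The plan is to iteratively apply the tower property of conditional expectation, from position $t$ down to position $1$, using the Fourier expansion of $p_i$ at each step to introduce the index $S_i$. Let $X_i = \mathds{1}_{e_i \in G}$; the algebraic identity driving the whole argument is $X_i^2 = X_i$, which gives $X_i(X_i - p) = (1-p) X_i$. Together with $\mathds{E}[X_i \mid X_1, \ldots, X_{i-1}] = p_i(X_1, \ldots, X_{i-1})$, this means that the conditional expectation of $(X_i - p)$ given $X_1, \ldots, X_{i-1}$ is $p_i - p$, while the conditional expectation of $(1-p) X_i$ is $(1-p) p_i$; in either case the result has the form $(1 - p \cdot b)(p_i - p(1 - b))$, with $b \in \{0,1\}$ indicating which of the two shapes was present.

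The core of the proof is a downward induction on $k$, from $t$ to $0$, with the invariant that after handling positions $k+1, \ldots, t$, the expectation equals
\begin{equation*}
    \sum_{\substack{S_{k+1}, \ldots, S_t \\ S_j \subseteq [j-1]}} \left[\prod_{j=k+1}^{t} (1 - p \cdot \mathds{1}_{j \in \bigcup_{\ell > j} S_\ell}) \, \chi_{S_j}\bigl(p_j - p \cdot \mathds{1}_{j \notin \bigcup_{\ell > j} S_\ell}\bigr)\right] \mathds{E}\left[\prod_{i=1}^{k} F_i^{(k)}\right],
\end{equation*}
where $F_i^{(k)} = (1-p) X_i$ if $i \in \bigcup_{j > k} S_j$ and $F_i^{(k)} = X_i - p$ otherwise. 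The base case $k = t$ is immediate, since then every $F_i^{(t)}$ equals $X_i - p$ and the product over $j$ is empty.

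For the inductive step from $k$ to $k - 1$, first apply the tower property to condition on $X_1, \ldots, X_{k-1}$: by the identity above, $\mathds{E}[F_k^{(k)} \mid X_1, \ldots, X_{k-1}] = (1 - p \cdot b_k)(p_k - p(1 - b_k))$ where $b_k = \mathds{1}_{k \in \bigcup_{j > k} S_j}$, which is exactly the $j = k$ factor that needs to appear in the target product. Next, Fourier-expand the function $p_k - p(1 - b_k)$ of $x_1, \ldots, x_{k-1}$ as $\sum_{S_k \subseteq [k-1]} \chi_{S_k}\bigl(p_k - p(1 - b_k)\bigr) \prod_{j \in S_k} x_j$, substitute $x_j = X_j$, and absorb each resulting $X_j$ with $j \in S_k$ into $F_j^{(k)}$. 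Using $X_j^2 = X_j$, this absorption either leaves $F_j^{(k)} = (1-p) X_j$ unchanged, or transforms $F_j^{(k)} = X_j - p$ into $(1-p) X_j$; in either case the result equals $F_j^{(k-1)}$, since $\bigcup_{\ell > k-1} S_\ell = S_k \cup \bigcup_{\ell > k} S_\ell$. This closes the induction, and taking $k = 0$ yields the claimed identity.

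The one genuinely delicate point is the bookkeeping of the $(1-p)$ prefactors: one must verify that, for each $i$ lying in at least one $S_j$ with $j > i$, precisely one $(1-p)$ is pulled out regardless of how many such $j$ there are. This is secured by the idempotence $X_i^2 = X_i$, which ensures that once $F_i$ has become $(1-p) X_i$ via its first absorption, every subsequent absorption of an extra $X_i$ leaves $F_i$ unchanged and produces no additional prefactor.
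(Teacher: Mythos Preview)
Your proof is correct and follows essentially the same approach as the paper: both arguments use descending induction from $t$ to $0$, driven by the identity $X_i(X_i-p)=(1-p)X_i$, and both introduce an auxiliary ``current state'' of each factor (your $F_i^{(k)}$ is exactly the paper's $Z_{i,r}$). The only cosmetic difference is that the paper phrases the inductive invariant as an identity for the conditional expectation $\mathds{E}[\,\cdot \mid X_1,\ldots,X_r]$, whereas you take the full expectation at every stage; the two formulations are equivalent via the tower property, and the mechanics of the inductive step are identical.
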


\begin{proof}
Define random variables $X_i = \mathds{1}_{e_i \in G}$. Then $\mathds{E} [ X_i | X_1, X_2, \ldots, X_{i-1} ]$ is, by definition, $p_i (X_1, X_2, \ldots, X_{i-1})$. Since we are calculating the expectation of a product of $X_i - p$ terms, we will start by conditioning on $X_1, \ldots, X_{t-1}$, replacing $X_t$ by $p_t (X_1, \ldots, X_{t-1})$ and expanding $p_t$ as a polynomial in $X_1, \ldots, X_{t-1}$. We can then repeat this process with $X_{t-1}$, $X_{t-2}$, and so on.

Note, however, that we replaced $X_t$ with some linear combination of the monomials $X_{S_t}$ for various $S_t \subseteq [t - 1]$. Therefore, when we now repeat this process for $X_{t-1}$, we deal with $X_{t-1} - p$ when multiplied by some $X_{S_t}$ --- which may itself contain $X_{t-1}$. To deal with such issues, note that $X_i \cdot (X_i - p) = X_i (1 - p)$, since $X_i$ is supported on $\{0, 1\}$. Thus, we define
\begin{equation*}
    Z_{i, r}(S_{r+1}, S_{r+2}, \ldots, S_t) = \left\{\begin{array}{lr}
        X_i \cdot (1 - p), & \text{if } i \in S_{r+1} \cup S_{r+2} \cup \dots \cup S_t \\
        X_i - p, & \text{otherwise}
        \end{array}\right. .
\end{equation*}

Now, we will prove the following fact by descending induction on $r$, from $r = t$ down to $r = 0$:
\begin{multline}
\label{chi_expansion_induction}
    \mathds{E} \left[ \prod\limits_{i=1}^t (X_i - p) \mid X_1, \ldots, X_r \right] \\
    = \sum\limits_{\substack {S_{r+1}, \ldots, S_t \\ S_k \subseteq [k-1]}} \prod \limits_{i=1}^{r} Z_{i, r} (S_{r+1}, S_{r+2}, \ldots, S_t) \prod\limits_{i=r+1}^{t} (1 - p \cdot \mathds{1}_{i \in \bigcup\limits_{j > i} S_j}) \chi_{S_i} ( p_i - p \cdot \mathds{1}_{i \notin \bigcup\limits_{j > i} S_j} ) .
\end{multline}
Note that this will conclude the proof, as the $r = 0$ case is equivalent to \Cref{chi_expansion}.

The induction base $r = t$ follows trivially from the fact that $Z_{i, r} = X_i - p$ by definition. Now, to move from $r$ to $r - 1$, we take the conditional expectation of the right-hand side of \Cref{chi_expansion_induction} with respect to $X_{r}$. In each product, the only term that has $X_r$ is $Z_{r, r} (S_{r+1}, S_{r+2}, \ldots , S_t)$. Therefore, this is the only term that changes, being replaced by
\begin{equation*}
    \mathds{E} \left[ Z_{r, r} (S_{r+1}, S_{r+2}, \ldots , S_t) \mid X_1, X_2, \dots, X_{r - 1} \right] = \left\{\begin{array}{lr}
        p_r \cdot (1 - p), & \text{if } r \in S_{r+1} \cup \dots \cup S_t \\
        p_r - p, & \text{otherwise}
        \end{array}\right. .
\end{equation*}

Calculating the Fourier series for the term above, we see that it can be written as
\begin{equation*}
    \sum\limits_{S_r \subseteq [r - 1]} X_{S_r} (1 - p \cdot \mathds{1}_{r \in \bigcup\limits_{j > r} S_j}) \chi_{S_r} ( p_r - p \cdot \mathds{1}_{r \notin \bigcup\limits_{j > r} S_j} ) .
\end{equation*}
Note that the $X_{S_r}$ term is exactly what we need in order to replace every $Z_{i, r} (S_{r+1}, S_{r+2}, \ldots, S_t)$ with $Z_{i, r - 1} (S_r, S_{r+1}, \ldots, S_t)$.

Indeed, we can split this $X_{S_r}$ and use it to have any $Z_{i, r} (S_{r+1}, S_{r+2}, \ldots, S_t)$ term with $i \in S_r$ be multiplied by $X_i$. This will change all those terms to be $X_i (1 - p)$ not only when $i \in S_{r + 1} \cup S_{r + 2} \cup \dots \cup S_t$ but also when $i \in S_r$, while the terms with $i \notin S_r \cup \dots cup S_t$ as they are.

When summed over all $S_r \in [r-1]$, this gives us the correct term for $r - 1$, thus proving the induction step.
\end{proof}

Given this, we hope to prove that the events $\{ e_i \in G \}$ are close enough to being independent, in the sense that the Fourier coefficients $\chi_S (p_i)$ are small. This would imply a bound on the expectation above. Indeed, we have:

\begin{proposition}
\label{recursive_chi_estimate}

Let $t = t(n)$ satisfy $t^{7} \ll d < c n$ for small enough $c$, $\mathbf{d} \in B_t(n, d)$, and $\mathcal{A} \subseteq \binom {n} {2}$ missing at most $t$ edges. Fix $1 \leq i \leq t$, let $e_1, \ldots, e_i \in \mathcal{A}$, and let $S \subseteq [i-1]$. Then,
\begin{equation*}
    |\chi_S (p_i)| \leq \frac {d} {n} \frac {(4|S|)!} {(24 d)^{|S|}} \left(1 + \mathcal{O} \left( \frac {t^2} {d} \right) \right) .
\end{equation*}
\end{proposition}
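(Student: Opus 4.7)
The plan is to view $p_i$ as a function on the Boolean cube $\{0,1\}^{i-1}$ and bound its Fourier coefficients by combining Claim \ref{p_estimate} with Propositions \ref{prod_bound} and \ref{inv_bound}, applied iteratively through the $\mathcal{P}, \mathcal{Y}$ operators of \Cref{b_operator}--\Cref{y_operator}. By \Cref{conditioning_smaller_graph}, $p_i(x) = P_{\mathbf{d}^{(x)}, \mathcal{A}^{(x)}}(e_i)$, where $\mathbf{d}^{(x)}(v) = \mathbf{d}(v) - |\{j : v \in e_j,\, x_j = 1\}|$ and $\mathcal{A}^{(x)} = \mathcal{A} \setminus \{e_1, \ldots, e_{i-1}\}$ does not actually depend on $x$. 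Since $i - 1 \leq t$ and $t^7 \ll d$, all modified parameters stay within the hypotheses of the earlier results.

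First I would directly compute the Fourier coefficients of the initial estimate $\tilde{P}(x) = (n-1)\mathbf{d}^{(x)}(a)\mathbf{d}^{(x)}(b) / (d|\mathcal{A}^{(x)}(a)||\mathcal{A}^{(x)}(b)|)$ from Claim \ref{p_estimate}. With $e_i = \{a,b\}$, the sets $J_a = \{j < i : a \in e_j\}$ and $J_b$ are disjoint, since each $e_j \neq e_i$, so $\tilde P$ is a polynomial of total degree $2$ in $x$: $\chi_S(\tilde P) = 0$ for $|S| \geq 3$, while for $|S| \leq 2$ an explicit calculation gives $|\chi_S(\tilde P)| \leq O((d/n)(4|S|)!/(24d)^{|S|})$ with a large safety margin. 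An analogous bound holds for the initial $\tilde Y$.

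Next I would show inductively that the Fourier bound $|\chi_S(\cdot)| \leq C(d/n)(4|S|)!/(24d)^{|S|}$ is preserved, up to bounded constant inflation in $C$, under one application of $\mathcal{P}$ or $\mathcal{Y}$. These operators are built from products, inverses, and a vertex-sum, so Propositions \ref{prod_bound} and \ref{inv_bound} apply with parameters $a = 1/(24d)$, $b = d/n$, and the assumption $t^7 \ll d$ comfortably meets their technical requirement $a \leq ct^{-6}$. Once such Fourier preservation is in hand, Proposition \ref{good_estimate} guarantees convergence of the iterates to $(P, Y)$ in $\infty$-norm, the Fourier bounds pass to the limit by linearity of $\chi_S$, and the $(1 + O(t^2/d))$ factor absorbs the $O(t^2/(nd))$ relative accuracy of Claim \ref{p_estimate}.

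The main technical obstacle is controlling the vertex-sum $\sum_{c \in \mathcal{A}^*(b)}$ inside $\mathcal{P}$: with $|\mathcal{A}^*(b)| = n - O(t)$, naively summing terms each carrying nonzero Fourier mass could inflate $|\chi_S|$ by a factor of $n$ and break the bound. The resolution is that most summands have $x$-dependence only through the shared quantities $\mathbf{d}^{(x)}(a), \mathbf{d}^{(x)}(b), P^{(x)}(ab)$, whose contributions combine coherently rather than summing independently; only the $O(t)$ summands where $c$ is an endpoint of some previously-conditioned edge $e_j$ introduce independent Fourier mass, and their aggregate contribution is bounded directly through Proposition \ref{prod_bound}. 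Separating these two regimes and summing carefully is the core of the argument and gives the required bound on $\chi_S$ of the entire sum, and hence on $\chi_S(p_i)$.
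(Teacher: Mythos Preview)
Your overall strategy matches the paper's: start from the explicit estimates of Claim~\ref{p_estimate}, iterate the operators $\mathcal{P},\mathcal{Y}$, and track Fourier-coefficient bounds through each iteration, with the vertex-sum split into the $n-\mathcal{O}(t)$ generic $c$'s and the $\mathcal{O}(t)$ special ones. That is the right architecture, and your identification of the vertex-sum as the crux is correct.

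There is, however, a genuine gap in the preservation step. You write that the bound $|\chi_S(\cdot)|\le C\,(d/n)(4|S|)!/(24d)^{|S|}$ is preserved ``up to bounded constant inflation in $C$'' under one application of $\mathcal{P}$ or $\mathcal{Y}$, and then pass to the limit. But convergence to $(P,Y)$ requires on the order of $|S|\log_{n/d} n$ iterations (so that the residual $\infty$-error becomes negligible in \eqref{fourier_formula}), and a constant factor raised to this power is not bounded; your limit argument therefore does not close. The statement of the proposition has leading constant $1+\mathcal{O}(t^2/d)$, not $\mathcal{O}(1)$, so the induction must reproduce \emph{exactly} that constant for $\tilde P$, not merely a bounded one.

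The paper handles this by proving that for $|S|\ge 2$ the per-iteration multiplier on the $\tilde P$-bound is strictly below $1$: after bounding each factor of the summand in \eqref{p_operator} and summing over $c$, the total Fourier mass of the sum is at most $C_7\, nd\,(24d)^{-|S|}(4|S|)!$ with $C_7$ controllable by the hypothesis $d<cn$; inverting and then multiplying by $\mathbf d(a)\mathbf d(b)$ (whose Fourier support has $|S_1|\le 2$) yields a coefficient bounded by $(2C_8+\tfrac12)\,(d/n)(24d)^{-|S|}(4|S|)!$, and taking $c$ small forces $2C_8+\tfrac12<1$. The case $|S|\le 1$ is handled separately, directly from the $\mathcal{O}(t^2/(nd))$ accuracy of Claim~\ref{p_estimate}. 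This contraction, not merely boundedness, is where the restriction $d<cn$ earns its keep, and it is the missing ingredient in your sketch. You also need to note that the enlarged parameter space $B_{t+2i}(n,d)$ after $i=\Theta(|S|\log_{n/d}n)$ iterations still satisfies $(t+2i)^6\ll d$, so that Propositions~\ref{prod_bound} and~\ref{inv_bound} remain applicable throughout.
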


\begin{proof}
We begin by plugging in some initial estimates for $P, Y$ across various graphs. By recursively applying the estimation iterations of \Cref{good_estimate}, we will obtain gradually improving estimates for $P, Y$, and we will show by induction that our estimates of $P$ satisfy the required condition, and that our estimates of $Y$ satisfy a similar condition as well. Since those estimates produce gradually improving approximations of the real $P, Y$, this will eventually mean that the real $P, Y$ satisfy these conditions as well.

One issue needs to be addressed before we can start: \Cref{good_estimate} turns an initial approximation for $P, Y$ on $B_t (n, d)$ into a better approximation on $B_{t - 2} (n, d)$. Thus, if we want to use this iterative approximation $i$ times and to obtain an estimate for $P, Y$ on $B_t (n, d)$, we will need to start with an initial estimate on $B_{t + 2i} (n, d)$ and to work recursively. In particular, the more iterations we have, the more degree sequences we need to consider.

This can be controlled by taking $i = \Theta (|S| \log_{n/d} n)$ iterations: we know from \Cref{good_estimate} that this amount of iterations can give us an approximation with error $(C p)^i = o (n^{-100 |S|})$, and by \Cref{fourier_formula} this translates to a negligible error in $|\chi_S (p_t)|$. On the other hand, this amount of iterations is still $o(d^{1/6})$, so we also have $(t + 2i) ^ 6 \ll d$. This will be crucial for the application of \Cref{prod_bound}, \Cref{inv_bound} and \Cref{good_estimate}.

Note that \Cref{prod_bound} and \Cref{inv_bound} require working with functions satisfying $f(0, 0, \dots, 0) = 1$. In order to apply those propositions to functions with any nonzero value of $f(0, 0, \dots, 0)$, we will simply work with $f(x_1, x_2, \dots, x_t) / f(0, 0, \dots, 0)$ instead of $f(x_1, x_2, \dots, x_t)$. For those normalized functions we will then prove a bound of some fixed term times $(24 d)^{-|S|} (4 S)!$ on the Fourier coefficient $\chi_S$, which means we can freely apply \Cref{prod_bound} and \Cref{inv_bound} since $(t + 2i)^6 \ll d$.

For a product $f \cdot g$ we will normalize both $f$ and $g$ this way, bound the Fourier coefficients as described above, apply \Cref{prod_bound}, and then multiply the obtained bounds by $f(0, 0, \dots, 0) \cdot g(0, 0, \dots, 0)$. For an inverse, \Cref{inv_bound} will give a bound on the Fourier coefficients of $f(0, 0, \dots, 0) / f(x_1, x_2, \dots x_t)$, so we will then need to divide by $f(0, 0, \dots, 0)$ to obtain a bound on the Fourier coefficients of $1 / f(x_1, x_2, \dots, x_n)$ itself. Note that we only consider Fourier coefficients of nonzero functions here.

Now, begin with the estimates $\tilde{P}, \tilde{Y}$ given by \Cref{p_estimate} on all $\mathbf{d} \in B_{t + 2i} (n, d)$. For a sufficiently small choice of $c$, it follows from \Cref{first_estimate} that those estimates are at most twice as large as the actual $P, Y$ for those graphs. Thus, we can apply \Cref{good_estimate} to repeatedly replace $\tilde{P}, \tilde{Y}$ with $\tilde{P'} = \mathcal{P} (\tilde{P}, \tilde{Y}), \tilde{Y'} = \mathcal{Y} (\tilde{P}, \tilde{Y})$ and to obtain better and better estimates.

We would like to show that some bounds on $\chi_S$ from the previous iteration carry over to the next one. We will assume we have, for some estimates $\tilde{P}, \tilde{Y}$ and for any $S \subseteq [t]$,
\begin{equation}
\label{iterative_p_tilde_bound}
    |\chi_S ( \tilde{P}_{\mathbf{d}, \mathcal{A}}(ab) ) | \leq \frac {d} {n} (24 d) ^ {-|S|} (4 |S|)! \left(1 + \mathcal{O} \left( \frac {t^2} {d} \right) \right) ,
\end{equation}
\begin{equation}
\label{iterative_y_tilde_bound}
    |\chi_S ( \tilde{Y}_{\mathbf{d}, \mathcal{A}}(abc) ) | \leq C \frac {d^2} {n^2} (24 d) ^ {-|S|} (4 |S|)! ,
\end{equation}
for all $\{a,b\}, \{b,c\} \in \mathcal{A} \setminus \{ e_1, e_2, \dots, e_t \}$, and we wish to show that the same will hold, for any $S \subseteq [t]$, with the improved estimates $\tilde{P'} = \mathcal{P} (\tilde{P}, \tilde{Y}), \tilde{Y'} = \mathcal{Y} (\tilde{P}, \tilde{Y})$.

For $|S| < 2$, the fact that we start from the estimates of \Cref{p_estimate}, and can only improve upon them, means that $\tilde{P}, \tilde{Y}$ may only differ from those initial estimates by a factor of
\begin{equation*}
    1 + \mathcal{O} \left( \frac {t^2} {nd} \right) .
\end{equation*}
This remains true throughout all iterations, which proves the claim for $|S| < 2$.

Now note that in order to deal with the $|S| \geq 2$ case, we still need to keep track of the Fourier coefficients of such $S$ when applying the definitions of $\mathcal{P}, \mathcal{Y}$ in order to apply \Cref{prod_bound} and \Cref{inv_bound}.

Our plan of attack is as follows: we first verify that the initial approximations given by \Cref{p_estimate} satisfy the bounds of \Cref{iterative_p_tilde_bound} and \Cref{iterative_y_tilde_bound} for all $S$. It can be verified that any non-zero Fourier coefficients for the initial approximation $\tilde{P}$ must have $|S| \leq 2$, and $|S| \leq 4$ for any non-zero Fourier coefficient of $\tilde{Y}$. Explicit calculation of those coefficients then verifies that the bounds indeed hold. It remains to show that the bounds are maintained when replacing $\tilde{P}, \tilde{Y}$ with $\mathcal{P} (\tilde{P}, \tilde{Y}), \mathcal{Y} (\tilde{P}, \tilde{Y})$. Once this is proven, after $i = \Theta (|S| \log_{n/d} n)$ iterations we will obtain estimates accurate enough to be plugged into \Cref{fourier_formula}, thus providing accurate bounds on the Fourier coefficients of the actual probability function $P$ as well.

Using the fact that $|\mathcal{A} (a)| = n - o(d)$ for any $a$ and $|S| = o(d)$, we can obtain bounds on the Fourier coefficients $\chi_S (\mathcal{B}(\tilde{P}, \tilde{Y})_{\mathbf{d}, \mathcal{A}} (ab))$. Recall the definition of $\mathcal{B}$ from \Cref{b_operator},
\begin{equation*}
    \mathcal{B}(\tilde{P}, \tilde{Y})_{\mathbf{d}, \mathcal{A}} (ab) = \sum\limits_{c \in \mathcal{A}(a) \setminus \mathcal{A}(b)} \tilde{P}_{\mathbf{d}, \mathcal{A}} (ac) + \sum\limits_{c \in \mathcal{A}(a) \cap \mathcal{A}(b)} \tilde{Y}_{\mathbf{d}, \mathcal{A}} (acb) ,
\end{equation*}
and plugging in the bounds of \Cref{iterative_p_tilde_bound} and \Cref{iterative_y_tilde_bound} gives
\begin{align*}
    | \chi_{S} (\mathcal{B}(\tilde{P}, \tilde{Y})_{\mathbf{d}, \mathcal{A}} (ab)) | & \leq \sum\limits_{c \in \mathcal{A}(a) \setminus \mathcal{A}(b)} | \chi_{S} \tilde{P}_{\mathbf{d}, \mathcal{A}} (ac) | + \sum\limits_{c \in \mathcal{A}(a) \cap \mathcal{A}(b)} | \chi_{S} \tilde{Y}_{\mathbf{d}, \mathcal{A}} (acb) | \\
    & \leq o(d) \cdot \frac {d} {n} (24 d) ^ {-|S|} (4 |S|)! + (n - o(d)) C_1 \frac {d^2} {n^2} (24 d) ^ {-|S|} (4 |S|)! \\
    & \leq C_2 \frac {d^2} {n} (24 d) ^ {-|S|} (4 |S|)! .
\end{align*}
In particular, we see that for any $\mathbf{d} \in B_{t + 2i - 1} (n, d)$, $\tilde{B} = \mathcal{B} (\tilde{P}, \tilde{Y})$ satisfies $\tilde{B}_{\mathbf{d}, \mathcal{A}} (ab) \leq C_2 {d^2} / {n}$.

We now recall that for any $\mathbf{d} \in B_{t + 2i - 2} (n, d)$, the operator $\mathcal{P}$ was defined in \Cref{p_operator} as
\begin{align*}
\mathcal{P}(\tilde{P}, \tilde{Y})_{\mathbf{d}, \mathcal{A}} (ab) & = \mathbf{d} (b) \left( \sum\limits_{c \in \mathcal{A}^*(b)} \frac {\mathbf{d}(c) - \mathcal{B}(\tilde{P}, \tilde{Y})_{\mathbf{d} - \mathbf{e}_a - \mathbf{e}_b, \mathcal{A}} (ca)} {\mathbf{d}(a) - \mathcal{B}(\tilde{P}, \tilde{Y})_{\mathbf{d} - \mathbf{e}_b - \mathbf{e}_c, \mathcal{A}} (ac)} \cdot \frac {1 - \tilde{P}_{\mathbf{d} - \mathbf{e}_b - \mathbf{e}_c, \mathcal{A}}(bc)} {1 - \tilde{P}_{\mathbf{d} - \mathbf{e}_a - \mathbf{e}_b, \mathcal{A}} (ab)} \right)^{-1} \\
= \mathbf{d} (a) \mathbf{d} (b) & \left( \sum\limits_{c \in \mathcal{A}^*(b)} \mathbf{d} (a) \frac {\mathbf{d}(c) - \mathcal{B}(\tilde{P}, \tilde{Y})_{\mathbf{d} - \mathbf{e}_a - \mathbf{e}_b, \mathcal{A}} (ca)} {\mathbf{d}(a) - \mathcal{B}(\tilde{P}, \tilde{Y})_{\mathbf{d} - \mathbf{e}_b - \mathbf{e}_c, \mathcal{A}} (ac)} \cdot \frac {1 - \tilde{P}_{\mathbf{d} - \mathbf{e}_b - \mathbf{e}_c, \mathcal{A}}(bc)} {1 - \tilde{P}_{\mathbf{d} - \mathbf{e}_a - \mathbf{e}_b, \mathcal{A}} (ab)} \right)^{-1} ,
\end{align*}
and consider the summand for any $c \in \mathcal{A}^{*}(b)$. It can be written as the product of three factors,
\begin{equation*}
    \frac {1 - \tilde{P}_{\mathbf{d} - \mathbf{e}_b - \mathbf{e}_c, \mathcal{A}}(bc)} {1 - \tilde{P}_{\mathbf{d} - \mathbf{e}_a - \mathbf{e}_b, \mathcal{A}} (ab)} \cdot \left( 1 - \frac {\mathcal{B}(\tilde{P}, \tilde{Y})_{\mathbf{d} - \mathbf{e}_b - \mathbf{e}_c, \mathcal{A}} (ac)} {\mathbf{d}(a)} \right)^{-1} \cdot (\mathbf{d}(c) - \mathcal{B}(\tilde{P}, \tilde{Y})_{\mathbf{d} - \mathbf{e}_a - \mathbf{e}_b, \mathcal{A}} (ca)) ,
\end{equation*}
and we would like to bound the Fourier coefficients for each of them.

Starting with the first one, we have $\chi_S (1) = 0$ for $S \neq \emptyset$, and therefore
\begin{equation*}
    |\chi_S ( 1 - \tilde{P}_{\mathbf{d} - \mathbf{e}_a - \mathbf{e}_b, \mathcal{A}} (ab) )| = |\chi_S ( \tilde{P}_{\mathbf{d} - \mathbf{e}_a - \mathbf{e}_b, \mathcal{A}} (ab) )| \leq \frac {d} {n} (24 d) ^ {-|S|} (4 |S|)!  \left(1 + \mathcal{O} \left( \frac {t^2} {d} \right) \right) .
\end{equation*}
An analogous claim holds for the term $1 - \tilde{P}_{\mathbf{d} - \mathbf{e}_a - \mathbf{e}_b, \mathcal{A}} (ab)$ as well. The Fourier coefficient of $S = \emptyset$ is $(1 + o(1)) (1 - p)$ for both of those terms. Applying \Cref{inv_bound} and \Cref{prod_bound} with $b = d / (n - d)$, we get a bound on the Fourier coefficients of the entire term. The resulting bounds for $S \neq \emptyset$ will have this $b = d / (n - d)$ factor, times the $S = \emptyset$ term which is $1 + o(1)$, times $(24 d)^{-|S|} (4 |S|)!$. For $S \neq \emptyset$ we therefore have
\begin{equation}
\label{iteration_summand_term1}
    \left| \chi_S \left( \frac {1 - \tilde{P}_{\mathbf{d} - \mathbf{e}_b - \mathbf{e}_c, \mathcal{A}}(bc)} {1 - \tilde{P}_{\mathbf{d} - \mathbf{e}_a - \mathbf{e}_b, \mathcal{A}} (ab)} \right) \right| \leq C_3 \frac {d} {n - d} (24 d)^{-|S|} (4 |S|)! .
\end{equation}

Similarly, whenever we condition on the existence of an edge and move to a graph with coordinate-wise smaller $\mathbf{d}$, the degree $\mathbf{d}(v)$ of any incident vertex will decrease by $1$. With the usual abuse of notation, we write this as $\chi_S (\mathbf{d} (v)) = -1$ if $S = \{j\}$ for some $e_j$ incident to $v$, and for any other $S \neq \emptyset$ we have $\chi_S (\mathbf{d} (v)) = 0$.

This implies that a similar line of reasoning can be used for the second factor, giving a bound of $(1 + o(1)) n / (n - d)$ on the coefficient with $S = \emptyset$, and for any $S \neq \emptyset$ we again multiply by a $b = d / (n - d)$ factor to get
\begin{equation}
\label{iteration_summand_term2}
    \left| \chi_S \left( 1 - \frac {\mathcal{B}(\tilde{P}, \tilde{Y})_{\mathbf{d} - \mathbf{e}_b - \mathbf{e}_c, \mathcal{A}} (ac)} {\mathbf{d}(a)} \right)^{-1} \right| \leq C_4 \frac {n d} {(n - d)^2} (24 d)^{-|S|} (4 |S|)! .
\end{equation}

For the third factor, we first assume that $c$ is not incident to $e_j$ for any $j \in S$. Note that the vast majority of choices for $c$ satisfy this assumption, with at most $\mathcal{O}(t)$ exceptions. In that case, for $S = \emptyset$ we get a bound of $(1 + o(1)) d (n - d) / n$, and for any $S \neq \emptyset$ we have
\begin{equation}
\label{iteration_summand_term3}
    |\chi_S ( \mathbf{d}(c) - \tilde{B}_{\mathbf{d} - \mathbf{e}_a - \mathbf{e}_b, \mathcal{A}} (ab) ) | = |\chi_S ( \tilde{B}_{\mathbf{d} - \mathbf{e}_a - \mathbf{e}_b, \mathcal{A}} (ab) ) | \leq C_5 \frac {d^2} {n} (24 d) ^ {-|S|} (4 |S|)! .
\end{equation}

Applying \Cref{prod_bound} to the bounds of \Cref{iteration_summand_term1}, \Cref{iteration_summand_term2}, and \Cref{iteration_summand_term3}, we get a bound on the summand for all but $\mathcal{O}(t)$ choices of $c$. The coefficient of $S = \emptyset$ is $(1 + o(1)) d$, and for any $S \neq \emptyset$ we have
\begin{equation}
\label{iteration_summand_bound}
    \left| \chi_S \left( \mathbf{d}(a) \frac {\mathbf{d}(c) - \tilde{B}_{\mathbf{d} - \mathbf{e}_a - \mathbf{e}_b, \mathcal{A}} (ca)} {\mathbf{d}(a) - \tilde{B}_{\mathbf{d} - \mathbf{e}_b - \mathbf{e}_c, \mathcal{A}} (ac)} \cdot \frac {1 - \tilde{P}_{\mathbf{d} - \mathbf{e}_b - \mathbf{e}_c, \mathcal{A}}(bc)} {1 - \tilde{P}_{\mathbf{d} - \mathbf{e}_a - \mathbf{e}_b, \mathcal{A}} (ab)} \right) \right| \leq C_6 \frac {d} {n - d} d (24 d) ^ {-|S|} (4 |S|)! .
\end{equation}
Note that we may bound $C_6 d / (n - d)$ by an arbitrarily small constant $C_7$, by choosing $d < c n$ with $c > 0$ small enough.

If our assumption on $c$ does not hold, we might have $\chi_S (\mathbf{d}(c)) = -1$, which would multiply the bound of \Cref{iteration_summand_term3} for the third term by a factor of at most $(n - d) / d$, thus multiplying the overall bound of \Cref{iteration_summand_bound} by this factor as well. Since there are only $\mathcal{O}(t)$ such ``bad'' choices of $c$, this will not significantly change the sum over all $c$. The number of possible $c$ is $|\mathcal{A}^{*}(b)| = (1 + o(1)) n$, so summing this over all $c \in \mathcal{A}$ we get a sum which is $(1 + o(1)) n d$, with Fourier coefficients
\begin{equation*}
    \left| \chi_S \left( \sum\limits_{c \in \mathcal{A}^{*}(b)} \mathbf{d}(a) \frac {\mathbf{d}(c) - \tilde{B}_{\mathbf{d} - \mathbf{e}_a - \mathbf{e}_b, \mathcal{A}} (ca)} {\mathbf{d}(a) - \tilde{B}_{\mathbf{d} - \mathbf{e}_b - \mathbf{e}_c, \mathcal{A}} (ac)} \cdot \frac {1 - \tilde{P}_{\mathbf{d} - \mathbf{e}_b - \mathbf{e}_c, \mathcal{A}}(bc)} {1 - \tilde{P}_{\mathbf{d} - \mathbf{e}_a - \mathbf{e}_b, \mathcal{A}} (ab)} \right) \right| \leq C_7 n d (24 d) ^ {-|S|} (4 |S|)!
\end{equation*}
for $S \neq \emptyset$.

Taking the inverse of this sum, we get a term which is of size $(1 + o(1)) (n d)^{-1}$, but by \Cref{inv_bound} it has
\begin{equation}
\label{iteration_inverse_sum_bound}
    \left| \chi_S \left( \sum\limits_{c \in \mathcal{A}^{*}(b)} \mathbf{d}(a) \frac {\mathbf{d}(c) - \tilde{B}_{\mathbf{d} - \mathbf{e}_a - \mathbf{e}_b, \mathcal{A}} (ca)} {\mathbf{d}(a) - \tilde{B}_{\mathbf{d} - \mathbf{e}_b - \mathbf{e}_c, \mathcal{A}} (ac)} \cdot \frac {1 - \tilde{P}_{\mathbf{d} - \mathbf{e}_b - \mathbf{e}_c, \mathcal{A}}(bc)} {1 - \tilde{P}_{\mathbf{d} - \mathbf{e}_a - \mathbf{e}_b, \mathcal{A}} (ab)} \right)^{-1} \right| \leq C_8 \frac {1} {n d} (24 d) ^ {-|S|} (4 |S|)!
\end{equation}
with $C_8$ arbitrarily small. Note that these are just the Fourier coefficients of $P_{\mathcal{A}, \mathbf{d}} (ab) / (\mathbf{d}(a) \mathbf{d}(b))$.

Finally, we multiply this by $\mathbf{d} (a) \mathbf{d} (b)$. The degrees $\mathbf{d}(a), \mathbf{d}(b)$ behave linearly when conditioning on specific edges existing or not existing, simply decreasing by $1$ when conditioning on an incident edge. Therefore, the $\mathbf{d} (a) \mathbf{d} (b)$ term has $\chi_S = 0$ for any $S$, unless $S$ has at most one edge incident to $a$ and at most one edge incident to $b$, in which case $\chi_S (\mathbf{d}(a) \mathbf{d}(b)) = (1 + o(1)) d^{2 - |S|}$. Note that no edge in $S$ may be incident to both $a$ and $b$, since we assume $\{a, b\} \in \mathcal{A} \setminus \{ e_1, e_2, \ldots, e_t \}$.

Applying the formula for Fourier coefficients of a product, we see that
\begin{equation*}
    | \chi_S (P_{\mathbf{d}, \mathcal{A}} (ab)) | \leq \sum\limits_{S_1 \cup S_2 = S} \left| \chi_{S_1} \left( \mathbf{d}(a) \mathbf{d}(b) \right) \right| \cdot \left| \chi_{S_2} \left(\frac {P_{\mathbf{d}, \mathcal{A}} (ab)} {\mathbf{d}(a) \mathbf{d} (b)} \right) \right| .
\end{equation*}
For previous terms, we also needed to keep track of the coefficients corresponding to $|S| < 2$, in order for \Cref{prod_bound} and \Cref{inv_bound} to be applicable. But this is no longer needed, so we can now use our assumption that $|S| \geq 2$.

We know that the Fourier coefficients of $P_{\mathbf{d}, \mathcal{A}} (ab) / (\mathbf{d}(a) \mathbf{d} (b))$ are precisely what we bound in \Cref{iteration_inverse_sum_bound}. Using this bound, we see that the $S_1 = \emptyset$ term contributes at most $(1 + o(1)) C_8 d^2 (nd) ^{-1} (24 d) ^ {-|S|} (4 |S|)!$ to the sum, each term with $|S_1| = 1$ may contribute at most $(1 + o(1)) C_8 d (nd)^{-1} (24 d) ^ {1-|S|} (4 |S| - 4)!$, and each term with $|S_1| = 2$ may contribute at most $(1 + o(1)) (nd)^{-1} (24 d) ^ {2-|S|} (4 |S| - 8)!$ --- without the $C_8$ factor since we may have $S_2 = \emptyset$ in this case.

Since those are the only possible nonzero terms, straightforward calculation now yields
\begin{align*}
    |\chi_S \tilde{P}'_{\mathbf{d}, \mathcal{A}} (ab)| \leq & \frac {d} {n} (24 d) ^ {-|S|} (4 |S|)! \cdot \left(C_8 + 2 C_8 |S| \binom {4 |S|} {4} ^ {-1} + \frac {4 |S|^2} {70} \binom {4 |S|} {8} ^ {-1} + o(1) \right) \\
    \leq & \frac {d} {n} (24 d) ^ {-|S|} (4 |S|)! \cdot (2 C_8 + 1/2) ,
\end{align*}
since $|S| \geq 2$. Recall that by taking $c$ small enough, we can make $C_8$ small enough to make sure that $2 C_8 + 1/2 < 1$, thus proving the required bound.

The proof for $\mathcal{Y}$ is much simpler --- recall that this operator is defined in \Cref{y_operator} as
\begin{equation*}
    \mathcal{Y}(\tilde{P}, \tilde{Y})_{\mathbf{d}, \mathcal{A}} (abc) = \mathcal{P}(\tilde{P}, \tilde{Y})_{\mathbf{d}, \mathcal{A}} (ab) \frac {\mathcal{P}(\tilde{P}, \tilde{Y})_{\mathbf{d} - \mathbf{e}_a - \mathbf{e}_b, \mathcal{A}} (bc)-Y_{\mathbf{d} - \mathbf{e}_a - \mathbf{e}_b, \mathcal{A}} (abc)} {1 - \mathcal{P}(\tilde{P}, \tilde{Y})_{\mathbf{d} - \mathbf{e}_a - \mathbf{e}_b \mathcal{A}} (ab)} ,
\end{equation*}
for any $\mathbf{d} \in B_{t + 2i - 2}(n, d)$. We may again deal with each of those $3$ factors separately.

Using what we have just proven about $\mathcal{P}$ and the induction hypothesis, we see that
\begin{equation*}
    |\chi_S (\mathcal{P}(\tilde{P}, \tilde{Y})_{\mathbf{d}, \mathcal{A}} (ab))| \leq C_9 \frac {d} {n} (24 d)^{-|S|} (4|S|)! ,
\end{equation*}
\begin{align*}
    |\chi_S (\mathcal{P}(\tilde{P}, \tilde{Y}) & _{\mathbf{d} - \mathbf{e}_a - \mathbf{e}_b, \mathcal{A}} (bc)-Y_{\mathbf{d} - \mathbf{e}_a - \mathbf{e}_b, \mathcal{A}} (abc))| \\
    & \leq |\chi_S (\mathcal{P}(\tilde{P}, \tilde{Y})_{\mathbf{d} - \mathbf{e}_a - \mathbf{e}_b, \mathcal{A}} (bc))| + |\chi_S (Y_{\mathbf{d} - \mathbf{e}_a - \mathbf{e}_b, \mathcal{A}} (abc))| \\
    & \leq C_{10} \left(\frac {d} {n} + \frac {d^2} {n^2} \right) (24 d)^{-|S|} (4|S|)! \leq 2 C_{10} \frac {d} {n} (24 d)^{-|S|} (4|S|)!,
\end{align*}
\begin{equation*}
    |\chi_S (1 - \mathcal{P}(\tilde{P}, \tilde{Y})_{\mathbf{d}, \mathcal{A}} (ab))^{-1}| \leq C_{11} \frac {n} {n - d} (24 d)^{-|S|} (4|S|)! \leq 2 C_{11} (24 d)^{-|S|} (4|S|)!,
\end{equation*}
for all $S \neq \emptyset$. Here the last inequality follows by applying \Cref{inv_bound} with $b = d / (n - d)$ as before.

Applying \Cref{prod_bound} to the three inequalities above, we get a similar bound for the product of those terms. But by \Cref{y_operator}, this product is precisely $\mathcal{Y}(\tilde{P}, \tilde{Y})_{\mathbf{d}, \mathcal{A}} (abc)$. We therefore get:
\begin{equation*}
    |\chi_S (\mathcal{Y}(\tilde{P}, \tilde{Y})_{\mathbf{d}, \mathcal{A}} (abc))| \leq C_{12} \frac {d^2} {n^2} (24 d)^{-|S|} (4|S|)! . \qedhere
\end{equation*}
\end{proof}

Note that we used \Cref{p_estimate} to obtain the estimate for the $|S| = 1$ case. When $|S| = \{ e_j \}$ and $e_i, e_j$ are non-incident, we can do a bit better:

\begin{proposition}
\label{better_p_estimate}
For $t^6 \ll d$ and $e_i, e_j$ non-incident,
\begin{equation*}
    | \chi_{\{ e_j \}} (p_i) | \ll \frac {1} {t^4 n} .
\end{equation*}
\end{proposition}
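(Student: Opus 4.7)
The plan is to compute the Fourier coefficient directly from the defining formula \Cref{fourier_formula}, namely
\begin{equation*}
    \chi_{\{e_j\}}(p_i) = p_i(\mathds{1}_{\{e_j\}}) - p_i(\emptyset),
\end{equation*}
and exploit the fact that the first-order estimate of \Cref{p_estimate} is insensitive to conditioning that does not touch the endpoints of $e_i$. So the leading-order contribution cancels, and only the second-order error remains.

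Concretely, using \Cref{conditioning_smaller_graph}, both $p_i(\mathds{1}_{\{e_j\}})$ and $p_i(\emptyset)$ can be written as $P_{\mathbf{d}', \mathcal{A}'}(e_i)$ on appropriately modified graphs. In both cases the allowed edge set is $\mathcal{A}' = \mathcal{A} \setminus \{e_1, \ldots, e_{i-1}\}$, since conditioning on either $e_k \in G$ or $e_k \notin G$ removes $e_k$ from $\mathcal{A}$. The degree sequences differ: for $p_i(\emptyset)$ we have $\mathbf{d}^{(0)} = \mathbf{d}$, while for $p_i(\mathds{1}_{\{e_j\}})$ we have $\mathbf{d}^{(1)} = \mathbf{d} - \mathbf{e}_u - \mathbf{e}_v$, where $e_j = \{u, v\}$.

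Next, I apply \Cref{p_estimate} to both probabilities. The hypotheses are satisfied, since the hypothesis $t^6 \ll d$ gives $t \ll \sqrt{d}$, and both $\mathbf{d}^{(0)}, \mathbf{d}^{(1)}$ lie in $B_{\mathcal{O}(t)}(n, d)$ with $|\mathcal{A}'| \geq \binom{n}{2} - t$. The claim yields $P^{(k)} = \tilde{P}^{(k)} \bigl(1 + \mathcal{O}(t^2/(nd))\bigr)$ for $k = 0, 1$, where, writing $e_i = \{a, b\}$,
\begin{equation*}
    \tilde{P}_{\mathbf{d}, \mathcal{A}'}(ab) = \frac{(n-1)\, \mathbf{d}(a)\, \mathbf{d}(b)}{d\, |\mathcal{A}'(a)|\, |\mathcal{A}'(b)|} .
\end{equation*}
The crucial point is that $\tilde{P}$ depends on $\mathbf{d}$ only through the two values $\mathbf{d}(a)$ and $\mathbf{d}(b)$. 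Since $e_j$ is non-incident to $e_i$, the endpoints $u, v$ of $e_j$ are distinct from $a, b$, so $\mathbf{d}^{(0)}(a) = \mathbf{d}^{(1)}(a)$ and $\mathbf{d}^{(0)}(b) = \mathbf{d}^{(1)}(b)$. Consequently $\tilde{P}^{(0)} = \tilde{P}^{(1)}$ exactly.

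The remainder is then bounded by the triangle inequality:
\begin{equation*}
    |\chi_{\{e_j\}}(p_i)| = |P^{(1)} - P^{(0)}| \leq 2 \tilde{P} \cdot \mathcal{O}\!\left(\frac{t^2}{nd}\right) = \mathcal{O}\!\left(\frac{d}{n} \cdot \frac{t^2}{nd}\right) = \mathcal{O}\!\left(\frac{t^2}{n^2}\right) .
\end{equation*}
Since $t^6 \ll d \leq cn$ gives $t^6 \ll n$, equivalently $t^2/n^2 \ll 1/(t^4 n)$, this delivers the required bound. I expect no real obstacle here beyond the bookkeeping of verifying that both modified sequences remain in the regime where \Cref{p_estimate} applies; all the analytic work has already been done in that claim, and the improvement over \Cref{recursive_chi_estimate} comes purely from observing that the first-order term vanishes by non-incidence.
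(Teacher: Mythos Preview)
Your proof is correct and follows essentially the same approach as the paper: both compute $\chi_{\{e_j\}}(p_i)$ as the difference $p_i(\{e_j\}) - p_i(\emptyset)$, apply \Cref{p_estimate} to each term, observe that non-incidence of $e_i$ and $e_j$ forces the main terms $\tilde P$ to coincide exactly (since $\mathbf{d}(a), \mathbf{d}(b)$ are unchanged), and conclude that only the $\mathcal{O}(t^2/(nd))$ error survives, giving $\mathcal{O}(t^2/n^2) \ll 1/(t^4 n)$ from $t^6 \ll d \leq cn$. Your bookkeeping of the modified $\mathcal{A}'$ and $\mathbf{d}^{(0)}, \mathbf{d}^{(1)}$ is slightly more explicit than the paper's, but the argument is identical.
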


\begin{proof}
Denote $e_i = \{ a, b \}$. If $e_i$ and $e_j$ were incident, say at $a$, then conditioning on $e_j$ existing or not existing would have changed $\mathbf{d}(a)$ in the remaining graph. Since they are not, \Cref{p_estimate} gives the same estimate on the probability for $e_i$ existing in $G_{\mathbf{d}, \mathcal{A}}$, both when conditioning on $e_j$ existing and when conditioning on it not existing:
\begin{equation*}
    P_{\mathbf{d}, \mathcal{A}} (ab) = \frac {(n - 1) \mathbf{d}(a) \mathbf{d}(b)} {d |\mathcal{A}(a)| |\mathcal{A}(b)|} \left(1 + \mathcal{O}\left( \frac {t^2} {n d} \right) \right) .
\end{equation*}

Thus, any possible difference will come from the error term of the estimate, and therefore the difference is $\mathcal{O}(t^2 / n^2)$. Since $t^6 \ll d \leq n$, we have $| \chi_{\{ e_j \}} (p_i) | = \mathcal{O}(t^2 n^{-2}) \ll t^{-4} n^{-1}$.
\end{proof}

Recall that $\Gamma = \{e_1, e_2, \ldots, e_t\}$ is a sequence of distinct edges, the result of a possible removal of some repeating edges from a closed walk. We say that $e_i$ returns $\Gamma$ to a \textit{previously discovered vertex} when it is incident to some $e_j$ with $j < i - 1$ and $(i, j) \neq (t, 1)$. Using this terminology, we can combine the last three results to yield the following:

\begin{proposition}
\label{non_repeating_bound}
Let $\Gamma$ be a sequence of $t$ non-repeating edges, with $\log n \ll t \ll d^{1 / 10}$ and $d < c n$. Also, let $m$ be the number of edges that return $\Gamma$ to a previously discovered vertex. Then $| M_{\Gamma} | \leq (2 + o(1)) t^{2m} [p (1-p) / n]^{t/2}$.
\end{proposition}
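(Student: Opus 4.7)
My plan is to start from the expansion of $M_{\Gamma}$ given by \Cref{chi_expansion}, which writes
\begin{equation*}
M_\Gamma = \sum_{\vec{S}} \prod_{i=1}^{t} \beta_i(\vec{S})\, \alpha_i(\vec{S}),
\end{equation*}
where $\vec{S} = (S_1, \ldots, S_t)$ with $S_i \subseteq [i-1]$, $U_i = \bigcup_{j > i} S_j$, $\beta_i = 1 - p\,\mathds{1}_{i \in U_i}$, and $\alpha_i = \chi_{S_i}(p_i - p\,\mathds{1}_{i \notin U_i})$. Since $|\beta_i| \leq 1$, it suffices to bound $\sum_{\vec{S}} \prod_i |\alpha_i|$. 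I would view each $\vec{S}$ as a digraph $H$ on $[t]$ with arcs from $i$ to every $j \in S_i$, and classify each index $i$ by $|S_i|$ and, when relevant, whether $i \in U_i$.

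I then apply the sharp Fourier bounds derived in this section: $|\alpha_i| \leq (1+o(1))(d/n)(4|S_i|)!/(24d)^{|S_i|}$ for $|S_i| \geq 1$ (by \Cref{recursive_chi_estimate}); the improved bound $|\alpha_i| \ll 1/(t^4 n)$ when $S_i = \{j\}$ with $e_i, e_j$ non-incident (by \Cref{better_p_estimate}); $\alpha_i = (1+o(1))p$ when $|S_i| = 0$ and $i \in U_i$; and $|\alpha_i| = O(p t^2/(nd))$ when $|S_i| = 0$ and $i \notin U_i$ (via \Cref{p_estimate}, since $p_i(0)$ equals $P$ in a slightly modified graph).

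I identify the main contributions as those $\vec{S}$ for which $H$ is a perfect matching on $[t]$ such that each matched pair $(i, j)$ with $i < j$ consists of incident edges $e_i, e_j$ in $\Gamma$. For such $\vec{S}$, each matched pair has the product $\alpha_j \beta_j \cdot \alpha_i \beta_i = \chi_{\{i\}}(p_j) \cdot p_i(0) \cdot (1-p) = (1+o(1))\, p(1-p)/n$, giving a per-matching total of $(1+o(1))^{t/2}[p(1-p)/n]^{t/2}$. To count such good matchings: for $m = 0$ the incidence graph of $\Gamma$ is contained in a single cycle of length $t$ with incidences $\{i, i+1\}$ and $\{1, t\}$, admitting at most $2$ perfect matchings when $t$ is even; each return edge contributes at most $O(1)$ new incidences in $\Gamma$, and an inductive argument shows that each such return can multiply the matching count by at most $O(t^2)$, yielding a total of at most $(2+o(1))\, t^{2m}$ good matchings.

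The main obstacle will be bounding all remaining $\vec{S}$ by $o\bigl(t^{2m}[p(1-p)/n]^{t/2}\bigr)$. I would organize the deviations from main configurations into three kinds: an index with $|S_i| = k \geq 2$, a singleton $S_i = \{j\}$ with $e_i, e_j$ non-incident, or an ``isolated'' index (with $|S_i| = 0$ and $i \notin U_i$). Each kind carries a quantitative suppression: passing from $|S_i| = k$ to $|S_i| = k+1$ multiplies the Fourier factor by $(4k+1)(4k+2)(4k+3)(4k+4)/(24d) \lesssim k^4/d$; a non-incident singleton suppresses by $t^{-4}$; an isolated index suppresses by $t^2/(nd)$. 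Summing over the choices of which indices deviate, the partners they involve, and the sizes $|S_i|$, and using the combinatorial sums \Cref{fourier_product_lemma_1} and \Cref{fourier_inverse_lemma_2} to control these geometric sums under the assumption $t^7 \ll d$, I would show that the total deviation contribution is $o(1)$ times the main term. Combining with the matching count then yields the desired bound.
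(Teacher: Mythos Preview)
Your strategy matches the paper's: expand via \Cref{chi_expansion}, bound each Fourier factor using \Cref{recursive_chi_estimate} and \Cref{better_p_estimate}, identify the main $\vec{S}$ as perfect matchings of incident pairs, and count those as $\leq 2 t^{2m}$. The matching count and the per-matching contribution are handled essentially as in the paper.

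There is, however, a real gap in your deviation analysis: your ``three kinds'' do not exhaust the non-main configurations. Take $t = 4$ and $S_1 = \emptyset$, $S_2 = \{1\}$, $S_3 = \{2\}$, $S_4 = \{3\}$; in your digraph $H$ this is a directed path $4 \to 3 \to 2 \to 1$. No $|S_i| \geq 2$; no index is isolated (index~$1$ lies in $U_1$, and indices $2,3,4$ have $|S_i| = 1$); and for consecutive edges of a walk each singleton $S_i = \{i-1\}$ is incident. So none of your three kinds is present, yet this is not a matching. In general, once all $|S_i| \leq 1$ and no index is isolated you only get $a := |\{i : |S_i| = 1\}| \geq t/2$; the configurations with $a > t/2$ --- directed paths of length $\geq 2$, or several singletons sharing a target --- slip through your taxonomy. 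Each is suppressed by $d^{-(a - t/2)}$, but you still owe an argument bounding their number against that suppression, and \Cref{fourier_product_lemma_1}, \Cref{fourier_inverse_lemma_2} are not set up for this.

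The paper closes this gap not by a case analysis of deviation types but by a chain of four ``collapse'' maps $f \colon D \setminus C \to D$, each satisfying $\Psi_{f(\vec S)} \geq M \Psi_{\vec S}$ with at most $N$ preimages and $N/M = o(1)$. The first two of these force the $S_i$ to be pairwise disjoint and $\bigcup_j S_j \subseteq \{i : S_i = \emptyset\}$; combined with $|S_i| \leq 1$ this forces $a \leq t/2$, which is exactly what your three kinds fail to secure. A smaller point: your isolated-index bound $|\alpha_i| = O(p t^2 / (nd))$ keeps only the relative error of \Cref{p_estimate} and forgets the $O(t/d)$ relative discrepancy between $\tilde{P}$ and $p$ coming from the perturbed degrees and edge set; the paper uses the looser $O(t^2/d)$ here.
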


\begin{proof}
Note that $t^{10} \ll d$, so both \Cref{recursive_chi_estimate} and \Cref{better_p_estimate} are applicable here. We begin by replacing each $\chi$ term in the sum given by \Cref{chi_expansion} with a bound from one of those propositions, and try and bound the resulting sum.

To elaborate, \Cref{chi_expansion} implies that one can bound the product we are interested in using bounds on Fourier coefficients:
\begin{equation*}
| M_{\Gamma} | = \left| \mathds{E} \left[ \prod\limits_{i=1}^t (\mathds{1}_{e_i \in G} - p) \right] \right| \leq \sum\limits_{\substack {S_1, \ldots, S_t \\ S_i \subseteq [i-1]}} \prod\limits_{i=1}^{t} | \chi_{S_i} ( p_i - p \cdot \mathds{1}_{i \notin \bigcup\limits_{j > i} S_j} ) | .
\end{equation*}
We then bound each of those Fourier coefficients with a bound from either \Cref{recursive_chi_estimate} or \Cref{better_p_estimate}. For most choices of $S_i$ we will simply plug in the bound from \Cref{recursive_chi_estimate}:
\begin{equation*}
    |\chi_{S_i} (p_i)| \leq \frac {d} {n} \frac {(4|S|)!} {(24 d)^{|S|}} \left(1 + \mathcal{O} \left( \frac {t^2} {d} \right) \right) ,
\end{equation*}
apart from the following two exceptions:
\begin{enumerate}
    \item When $S_i = \emptyset$ and $i \notin \bigcup\limits_{j > i} S_j$, we get an improved bound of $|\chi_{\emptyset} (p_i - p)| = \mathcal{O}(t^2 / d)$.
    \item When $S_i = {j}$ with $e_i, e_j$ non-incident, we can use \Cref{better_p_estimate} instead of \Cref{recursive_chi_estimate}, getting a bound of $t^{-4} / n$ instead of $1 / n$.
\end{enumerate}
For any choice $S_1, S_2, \ldots, S_t$, we denote the product of those bounds over $1 \leq i \leq t$ by $\Psi_{S_1, S_2, \ldots, S_t}$.

The possible choices $S = (S_1, S_2, \ldots, S_t)$ take value in the set $D = 2 ^ {\emptyset} \times 2 ^ {[1]} \times 2 ^ {[2]} \times \cdots \times 2 ^ {[t-1]}$. We would like to eventually claim that a $1 - o(1)$ fraction of the sum comes from a very small subset of $D$. To that end, we will repeatedly find transformations $f \colon D \setminus C \rightarrow D$ for some smaller subset $C \subseteq D$, such that $\Psi_{S_1, S_2, \ldots, S_t} \ll \Psi_{f(S_1, S_2, \ldots, S_t)}$.

For such $f$, denote by $N$ an upper bound on the maximal number of different preimages of any $S$ under $f$, and let $M$ be some value such that $M \Psi_{S} \leq \Psi_{f(S)}$ for all $S \in D \setminus C$. Then, as long as $N / M = o(1)$, the contribution of all possible sources $S$ will be negligible compared to that of all images $f(S)$. Applying this repeatedly, we see that $1 - o(1)$ of the sum must come from the terms for which we cannot apply $f$ any further --- that is, from $C$.

First, we apply this technique with
\begin{equation*}
    C = \{ (S_1, S_2, \ldots, S_t) \in D : S_i \cap S_j = \emptyset \} .
\end{equation*}
Indeed, define $f$ on $D \setminus C$ by taking the smallest $a \in S_i \cap S_j$ and removing it from the first $S_i$ in which it appears. This will not change any of the $\mathds{1}_{i \in \bigcup S_j}$ terms, but will multiply the $i$th bound by a factor of at least $M = \Omega(d / |S_i|^4) = \Omega(d /t^4)$. To determine a possible preimage $f^{-1}$ we have $t$ choices for $a$ and $t$ choices for the $S_i$ we removed it from, so we can take $N = t^2$. Since $N / M = \Omega(t^6 / d) = o(1)$, a $1 - o(1)$ fraction of the sum comes from $C$.

Now we turn our attention to
\begin{equation*}
    C = \{ (S_1, S_2, \ldots, S_t) \in D : \bigcup\limits_{j=1}^t S_j \subseteq \{i : S_i = \emptyset\} \} .
\end{equation*}
Define $f$ on $D \setminus C$ by taking the first index $i$ such that $S_i \neq \emptyset$ and $i \in S_j$, and removing it from $S_j$. This will multiply the $j$th bound of $\Psi_{S_1, S_2, \ldots, S_t}$ by a factor of at least $M = \Omega(d / t^4)$ like before, while leaving the $i$th bound unchanged. Once again, this modification may send up to $N = t^2$ terms into a single one and $N / M = o(1)$, so the combined contributions of terms in $D \setminus C$ is an $o(1)$ fraction of the total sum.

Note that we already know the $S_i$ to be pairwise disjoint, so this implies that there are at least as many $i$ with $S_i = \emptyset$ as there are nonempty $S_j$. In other words, we are left only with terms with at least half of the $S_i$ empty. We now take
\begin{equation*}
    C = \{ (S_1, S_2, \ldots, S_t) \in D : |S_i| \leq 1 \} ,
\end{equation*}
and $f$ that takes the smallest $a < b \in S_i$, removes them both, and adds $a$ to $S_b$ instead. This will multiply the $i$th bound of $\Psi_{S_1, S_2, \ldots, S_t}$ by a factor of at least $\Omega(d^2 t^{-4})$, and the $b$th bound by at least $\Omega(t^{-4} d^{-1})$, all while leaving the $a$th bound unchanged. Each $S$ has at most $N = t^2$ preimages --- corresponding to a choice of $b$ and $i$, since $a$ will be determined by $S_b = \{ a \}$. Therefore $N / M = \mathcal{O}(t^{10} / d) = o(1)$.

Finally we consider
\begin{equation*}
    C = \{ (S_1, S_2, \ldots, S_t) \in D : \sum\limits_{i = 1}^t |S_i| = \lfloor{ t / 2 \rfloor} \} .
\end{equation*}
For $S \in D \setminus C$, we can find two empty $S_i, S_j$ such that $i, j$ are not in any other $S$. Let $f$ act by finding the first such $i < j$ and adding $i$ to $S_j$. This will multiply the $i$th bound of $\Psi_{S_1, S_2, \ldots, S_t}$ by a factor of at least $\Omega(d / t)$, and the $j$th bound by at least $\Omega(t^{-4} / t) = \Omega(t^{-5})$. Since there are at most $N = t$ options to reverse this process by choosing $S_j = \{i\}$, we have $N / M = \Omega(t^7 / d) = o(1)$.

To summarize, a $1 - o(1)$ fraction of the sum must come from terms $S$ such that $[t]$ can be paired into $(i, j)$ with $i < j, S_i = \emptyset, S_j = \{ i \}$, plus one possible unpaired index if $t$ is odd. Each such pair contributes
\begin{equation*}
    |\chi_{\emptyset} (p_i)| \cdot |\chi_{\{i\}} (p_j)| \leq (1-p) \cdot p \cdot \frac {1} {n} \left(1 + \mathcal{O} \left( \frac {t^2} {d} \right) \right)= \frac {p (1 - p)} {n} \left(1 + \mathcal{O} \left( \frac {t^2} {d} \right) \right) ,
\end{equation*}
times an additional $t^{-4}$ factor if $e_i, e_j$ are non-incident. Multiplying this over all such pairs, and noting that the product of all of the $(1 + \mathcal{O} (t^2 / d))$ terms is $1 + o(1)$, we get a bound $\Psi_{S_1, S_2, \ldots, S_t} \leq (1 + o(1)) t^{-4r} (p (1-p) / n)^{t / 2}$ with $r$ being the number of pairs $(i, j)$ with $e_i, e_j$ non-incident.

The question of bounding $|M_{\Gamma}|$ therefore boils down to another question, combinatorial in nature: Let $\Gamma$ be a sequence of $t$ edges $e_1, e_2, \ldots, e_t$, so that $\Gamma$ returns to a previously discovered vertex at most $m$ times. Then we would like to bound the number of ways to partition its edges to $t / 2$ pairs, so that exactly $r$ of those pairs have non-incident edges. We first bound the case of $r = 0$, in which we want all pairs to have incident edges.

If $m = 0$, then the only pairs of edges $e_i, e_j$ that could possibly be incident are the ones with $i + 1 \equiv j \pmod t$ or vice versa. Therefore, at most two pairings with the edges in each pair being incident are possible: $\{ (e_1, e_2), (e_3, e_4), \ldots, (e_{t - 1}, e_t) \}$ and $\{ (e_2, e_3), (e_4, e_5), \ldots, (e_{t - 2}, e_{t - 1}), (e_t, e_1) \}$.

If, $m > 0$, then each time $\Gamma$ returns to a previously visited vertex means that some edge $e_i$, and possibly also $e_{i+1}$, are incident to some of the edges $e_1, e_2, \ldots, e_{i - 2}$. We may first consider those edges $e_i, e_{i+1}$ for every edge $e_i$ which revisits a previously discovered vertex --- $2m$ edges in total, and pair each of them with some incident edges. There are at most $t^{2m}$ ways to do so, and then at most $2$ ways to partition the remaining edges into incident pairs by the same logic as before. Thus, the number of options to choose a pairing with $r = 0$ and arbitrary $m$ is at most $2 t^{2m}$.

Turning our attention to the case of arbitrary $r$, exactly $r$ pairs of edges may be non-incident. We may start by choosing those pairs, out of at most $t^{2r}$ options. But now, the remaining edges themselves form a sequence $\Gamma'$ of $t - 2r$ edges, and $\Gamma'$ also returns to a previously discovered vertex at most $m$ times. We must choose from it a pairing with $r = 0$ non-incident pairs, which we already know to be possible in at most $2 t^{2m}$ ways.

Therefore, for a walk $\Gamma$ that returns to a previously discovered vertex $m$ times, there are at most $2 t^{2m + 2r}$ pairings with $r$ non-incident pairs. This implies a bound
\begin{equation*}
    |M_\Gamma| \leq (2 + o(1)) [p (1-p) / n]^{t/2} \sum\limits_{r=0}^{\infty} t^{2m + 2r} t^{-4r} = (2 + o(1)) t^{2m} [p (1-p) / n]^{t/2}. \qedhere
\end{equation*}
\end{proof}

This implies a bound for arbitrary $\Gamma$ as well:

\begin{corollary}
\label{walk_contribution_bound}
Let $\log n \ll k \ll d^{1 / 10}$ and $d < c n$ and let $\Gamma$ be a closed walk of length $k$ with $m$ edges that return to previously discovered vertices, $t$ non-repeating edges, and $t_2$ distinct repeating edges. Then
\begin{equation*}
    |M_{\Gamma}| \leq (2 + o(1)) k^{2m} [p (1-p)]^{t_2 + t/2} n^{-t / 2} .
\end{equation*}
\end{corollary}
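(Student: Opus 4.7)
The plan is to combine the reduction in \Cref{mixed_walks} with \Cref{non_repeating_bound}. Recall from \Cref{mixed_walks} the decomposition
\begin{equation*}
    |M_\Gamma| \leq \sum_{S_2 \subseteq T_2} (1-p)^{m_{S_2}} p^{k_2 - m_{S_2}} \mathds{P}[\bigcap_{i \in S_2} \{e_i \in G\} \cap \bigcap_{i \in T_2 \setminus S_2} \{e_i \notin G\}] \cdot |I_{S_2}|,
\end{equation*}
in which the inner sums $I_{S_2}$ involve only the non-repeating edges. Given any uniform upper bound $B_\Gamma$ on $|I_{S_2}|$, the outer weighted sum is exactly of the form handled in the proof of \Cref{all_repeating_t_bound}: applying \Cref{joint_probability_estimate} to each probability, it factors as $\prod_{i \in T_2}\left[p(1-p)^{m_i} + (1-p) p^{m_i}\right] \leq [p(1-p)]^{t_2}$, since each $m_i \geq 2$. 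Hence $|M_\Gamma| \leq (1 + o(1)) B_\Gamma [p(1-p)]^{t_2}$.

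To obtain $B_\Gamma$, I apply \Cref{non_repeating_bound} to the subsequence $\Gamma' = e_{i_1}, \ldots, e_{i_t}$ of non-repeating edges of $\Gamma$, ordered by appearance, inside the conditional probability space induced by $S_2$. By \Cref{conditioning_smaller_graph}, this conditioning merely replaces $\mathbf{d} = (d, \ldots, d)$ by some $\mathbf{d}' \in B_k(n, d)$ and shrinks $\mathcal{A}$ by at most $k$ edges. Since $k \ll d^{1/10}$ gives $k^{10} \ll d$, the hypotheses of \Cref{recursive_chi_estimate}, \Cref{better_p_estimate}, and therefore of \Cref{non_repeating_bound}, are all satisfied in this modified space. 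This yields
\begin{equation*}
    |I_{S_2}| \leq (2 + o(1)) \, t^{2m'} [p(1-p)/n]^{t/2},
\end{equation*}
where $m'$ counts edges of $\Gamma'$ incident to a non-adjacent earlier edge in $\Gamma'$, with the cyclic exclusion $(r,s) \neq (t,1)$.

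The last step is to verify that $m' \leq m$. If $e_{i_r}$ is a return in $\Gamma'$, witnessed by incidence to $e_{i_s}$ with $s \leq r-2$ and $(r,s) \neq (t,1)$, then in $\Gamma$ we have $i_s \leq i_r - 2$, so $e_{i_r}$ is incident to the earlier non-adjacent edge $e_{i_s}$. Moreover, $(i_r, i_s) = (k, 1)$ would force $i_1 = 1$ and $i_t = k$, and hence $(r, s) = (t, 1)$, contradicting the hypothesis. Thus $e_{i_r}$ is also a return in $\Gamma$, and the injectivity $r \mapsto e_{i_r}$ gives $m' \leq m$. Combined with $t \leq k$, this yields $t^{2m'} \leq k^{2m}$ and
\begin{equation*}
    |M_\Gamma| \leq (2 + o(1)) \, k^{2m} [p(1-p)]^{t_2 + t/2} n^{-t/2},
\end{equation*}
as required. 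The main subtlety is the bookkeeping around the cyclic exclusion needed to establish $m' \leq m$; the other steps are direct applications of \Cref{mixed_walks}, \Cref{all_repeating_t_bound}, \Cref{conditioning_smaller_graph}, and \Cref{non_repeating_bound}.
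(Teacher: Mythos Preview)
Your proof is correct and follows essentially the same approach as the paper's: form the subsequence $\Gamma'$ of non-repeating edges, invoke \Cref{non_repeating_bound} in the conditioned space, and combine with the reduction of \Cref{mixed_walks} and the factorisation from \Cref{all_repeating_t_bound}. The paper's proof simply asserts that $\Gamma'$ returns to previously discovered vertices at most $m$ times, whereas you spell out the injection $r \mapsto e_{i_r}$ and the index arithmetic (including the cyclic exclusion) that justifies $m' \le m$; this is additional detail rather than a different idea.
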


\begin{proof}
We begin by considering a sequence of edges $\Gamma'$, which is constructed by taking the $t$ edges which appear exactly once in $\Gamma$. Since $\Gamma'$ also returns to previously discovered vertices at most $m$ times, \Cref{non_repeating_bound} shows that $|M_{\Gamma'}| \leq (2 + o(1)) t^{2 m} [p (1 - p) / n]^t \leq (2 + o(1)) k^{2 m} [p (1 - p) / n]^t$.

But through the reasoning of \Cref{mixed_walks}, we may deduce from this a bound on $|M_{\Gamma}|$ of the entire walk:
\begin{equation*}
    |M_{\Gamma}| \leq (2 + o(1)) k^{2 m} [p (1-p) / n] ^ {t / 2} [p (1 - p)] ^ {t_2} = (2 + o(1)) k^{2 m} [p (1 - p)] ^ {t_2 + t / 2} n^{- t / 2} . \qedhere
\end{equation*}
\end{proof}

\section{Enumeration of Walks}
\label{section_enumeration}

In the previous section, a bound on the contribution $|M_{\Gamma}|$ of each closed walk $\Gamma$ has been given in terms of various parameters $k, m, t, t_2$ of the walk. Now, in order to obtain a bound on the total contribution of all walks, we need to determine how many closed walks exist given certain values of these parameters.

One can obtain such bounds using existing techniques. We will use similar methods to those of \cite{furedikomlos1981, vu2005}. To effectively apply them, we introduce new parameters: $b$ is the number of vertices $\Gamma$ visits, and $r$ is the number of revisits of a vertex coming from a non-repeating edge. Denote by $W_n(k, t, t_2, m, b, r)$ the number of non-lazy walks on $K_n$ with these parameters.

\begin{lemma}
\label{walk_enumeration}
For any valid $n, k, t, t_2, m, b, r$, we have
\begin{equation*}
    W_n(k, t, t_2, m, b, r) \leq n^b \binom {k} {2b - 2 - t + r} 2^{2b - 2 - t + r} b^{2k-4b+4+2t-2r}.
\end{equation*}
\end{lemma}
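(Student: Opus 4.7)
The plan is to encode each closed walk $\Gamma$ of length $k$ with the given parameters in a way that makes the stated bound manifest, after first observing that the bound does not depend on $m$, so the target is a uniform estimate over all choices of $m$. The encoding has three parts: (i) the labels of the $b$ distinct vertices of $\Gamma$, listed in their order of first appearance and contributing at most $n^b$; (ii) the choice of which of the $k$ positions of $\Gamma$ are ``cheap'' versus ``expensive,'' contributing $\binom{k}{s}$ for $s = 2b - 2 - t + r$; and (iii) the choices made at each step --- at most $2$ options at each of the $s$ cheap positions and at most $b^2$ options at each of the $k - s$ expensive positions.

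The classification of steps is as follows. Consider the spanning tree formed by the $b-1$ \emph{discovery edges}, i.e.\ the edges whose first traversal introduces a new vertex. Among these, $t - r$ are non-repeating (used only once in $\Gamma$), while $d_2 = (b-1) - (t - r) = b - 1 - t + r$ are repeating. A step is declared \emph{cheap} if it is either a discovery step ($b - 1$ in total) or the canonical ``backtrack'' along a repeating tree edge ($d_2$ in total, one per such edge); together these give exactly $s = (b-1) + d_2 = 2b - 2 - t + r$ cheap positions. At each cheap step the next vertex admits at most two possibilities: either extend the walk to a brand-new vertex or begin a canonical backtrack. At each of the $k - s$ expensive positions, I claim the next vertex is determined by at most $b^2$ pieces of data: a choice of target among the $b$ labeled vertices, together with one additional $b$-valued label identifying which previous use of the re-used edge (or, for a first-use non-tree edge, which earlier edge incident to the current vertex) the step is paralleling.

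The main obstacle will be fixing precise conventions for the canonical spanning tree, the backtrack direction, and the bookkeeping at expensive steps, then rigorously verifying that the resulting encoding is injective. In particular, the ``at most $2$ continuations per cheap step'' claim must be carefully defended for steps that could simultaneously begin a backtrack and abut an expensive revisit, and the ``at most $b^2$ per expensive step'' count requires showing that one $b$-valued pointer to a prior use of the same edge suffices to reconstruct the step regardless of whether it lies in the middle of a repeat-traversal of a cycle or at the start of one. The approach closely follows the Füredi--Komlós style enumeration in \cite{furedikomlos1981, vu2005}, with the refinement that the parameters $t$, $t_2$, and $r$ are tracked separately so that the split into cheap and expensive positions reflects exactly the canonical backtracking structure of $\Gamma$.
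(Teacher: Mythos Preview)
Your numerics match the paper's exactly: $s = 2b - 2 - t + r$ cheap positions with a factor $2$ each, and $k-s$ expensive positions with a factor $b^2$ each. The high-level plan (encode the walk, count options) is also the paper's plan, and your citation of F\"uredi--Koml\'os and Vu is apt. But the specific encoding you describe does not work as stated, and the gap is precisely the one you flag as ``the main obstacle'' without indicating how you would resolve it.

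The claim that a cheap step has at most two continuations --- ``discover a new vertex'' or ``begin a canonical backtrack'' --- is false in general. After you discover a child $u_1$ of $v$, return to $v$ via neutral steps, discover another child $u_2$, and return again via neutral steps, the vertex $v$ is now incident to three tree edges each traversed exactly once (to its parent, to $u_1$, to $u_2$), so a ``backtrack'' step from $v$ has three candidate targets, not one. No choice of canonical direction fixes this without extra data. Relatedly, your justification for $b^2$ at an expensive step is off: once the target vertex is named, the edge is determined, so the second $b$-valued label you propose is redundant there and does nothing to disambiguate the backtracks elsewhere.

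The paper (following Vu) handles exactly this issue via \emph{condensed codewords}: write $+$ for a discovery step, $-$ for the second traversal of a discovery edge, and the target vertex for a neutral step; then repeatedly delete the substring ``$+-$''. After condensing, any surviving run of $-$ signs is annotated by the vertex at which it ends, and the key combinatorial fact (from \cite{vu2005}) is that the number of such annotations is at most the number of neutral steps. That is why the second factor of $b$ can legitimately be charged to the neutral steps, yielding $b^{2(k-s)}$. Your proposal arrives at the same exponent but by an allocation that is not backed by an injective encoding; to complete the argument you should replace your cheap/expensive bookkeeping with the condensed-codeword mechanism, or else prove an equivalent bound on the number of ambiguous backtracks.
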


\begin{proof}
Consider walks of length $k$ on $b$ vertices, with $t$ non-repeating edges, $t_2$ distinct repeating edges, and $m$ edges that return to previously discovered vertices. Therefore, there are at most $t + t_2$ distinct edges, and $b \leq t + t_2 + 1 - m$.

The argument detailed in \cite{vu2005} then gives a bound on the number of such walks, and we also present it here briefly. Every new vertex is one of at most $n$ options, so there are at most $n^b$ choices of the list of vertices, in the order in which they are discovered.

Given this list, we classify the steps of the walk into:
\begin{itemize}
    \item \textit{Positive steps} reach a previously undiscovered vertex.
    \item \textit{Negative steps} traverse the edge of a previous positive step for the second time.
    \item \textit{Neutral steps} are neither positive nor negative --- for example, steps that traverse an edge for a third time, or steps on a non-repeating edge that reaches a previously visited vertex.
\end{itemize}

We now encode each walk using a codeword, that we want to be uniquely decodable given the parameters of the walk and the list of vertices. We begin by denoting positive steps by $``+"$, negative steps by $``-"$, and neutral steps by their second vertex. This is almost enough to fully decode the walk, but it is possible for a negative step to be incident to several positive edges that have only been traveled once so far, so that it can travel back on any of those edges.

In order to mend this, we introduce the notion of \textit{condensed codewords}. We can condense a codeword by repeatedly removing the substring $``+-"$ from the original encoding, as this substring can always be uniquely decoded. After that, it is shown in \cite{vu2005} that any remaining substring of consecutive $``-"$ signs can be uniquely decoded by writing down the vertex in which it ends. In other words, writing down those vertices will make our original codeword uniquely decodable. It is also shown in \cite{vu2005} that the number of extra vertices we will be writing down is at most the number of neutral edges.

Now, to calculate the number of such walks, observe that $r$ is precisely the number of neutral steps that are taken on non-repeating edges, and that the other $t - r$ non-repeating edges must all appear as positive steps in the codeword. Note that each such step must end in a previously discovered vertex, and therefore $r \leq m$. We thus need to consider the $k$ steps in our codeword, and choose which $b - 1$ of them will be positive, which $b - 1 - t + r$ will be negative, and which $k - 2b + 2 + t - r$ will be neutral.

This gives at most
\begin{equation*}
    \binom {k} {2b - 2 - t + r} 2^{2b - 2 - t + r} b^{k - 2b + 2 + t - r}
\end{equation*}
options to choose the $+, -$ and neutral symbols for each step. Also, at most $k - 2b + 2 + t - r$ negative steps require the extra end vertex notation, which costs $b$ for each such edge.

Thus,
\begin{equation*}
    W_n(k, t, t_2, m, b, r) \leq n^b \binom{k} {2b - 2 - t + r} 2^{2b - 2 - t + r} b^{2k-4b+4+2t-2r} . \qedhere
\end{equation*}
\end{proof}

Now, we have a bound on $W_n(k, t, t_2, m, b, r)$ and we know that each such walk contributes at most $M_{\Gamma} \leq (2 + o(1)) n^{-t / 2} k^{2 m} [p (1 - p)]^{t_2 + t / 2}$ to the expected trace. When combined, these two results should suffice to obtain a bound on the combined contributions of all possible walks $\Gamma$, thus proving \Cref{actual_main_result}.

\begin{proof}[Proof of \Cref{actual_main_result}]

Combining the number of walks from \Cref{walk_enumeration} with the bound on $M_{\Gamma}$ from \Cref{walk_contribution_bound}, the contribution of all closed walks with given $k, t, t_2, r, b, m$ is at most
\begin{equation*}
(2 + o(1)) n^{b - t / 2} \binom {k} {2b - 2 - t + r} 2^{2b - 2 - t + r} k^{2m} [p (1 - p)]^{t_2 + t / 2} b^{2k - 4b + 4 + 2t - 2r}.
\end{equation*}

Fix some values for $k, t, t_2$ and let $m, b, r$ vary. We want to find the $m, b, r$ that maximize the term above. Since they only effect the
\begin{equation*}
    n^b \binom {k} {2b - 2 - t + r} 2^{2b} k^{2m} b^{2k - 4b - 4 + 2t - 2r}
\end{equation*}
part, and since $n^b$ grows fast with $b$, we may hope that maximizing $b$ is the most effective strategy.

Indeed, increasing $b$ by $1$ may decrease the other terms by a $\mathcal{O}(k^6)$ factor in the worst case, so the sum of terms with some given $k, t, t_2, m, r$ is dominated by the term with $b = t_2 + t - m + 1$.

Now, we may further increase $b$ by decreasing $m$ (and possibly $r$ if necessary, since $r \leq m$) by $1$. This will still increase $n^b$ by a factor of $n$, and may still only decrease other factors by $\mathcal{O}(k^6)$. Thus, the sum of terms with given $k, t, t_2$ is dominated by the term with $m = r = 0$ and $b$ maximal, which is $(2 + o(1)) n^{t_2 + t / 2 + 1} 2^{2 t_2 + t} [p (1-p)]^{t_2 + t/2} = (2 + o(1)) n [4 n p (1 - p)]^{t_2 + t/2}$.

Since $4 n p (1 - p) > 1$, this is maximized when $t_2 + t / 2$ is maximal. Since there are $k - t$ edges in the repeating part, and each of them appears at least twice, then $t_2 \leq (k - t) / 2$ --- which gives $t_2 + t / 2 \leq k / 2$. Therefore, the total contribution of terms with given $k, t, t_2$ is at most $(2 + o(1)) n [4 n p (1-p)]^{k / 2}$.

Summing this over the possible values of $t, t_2$, which are between $0$ and $k$, we get a bound of
\begin{equation*}
    (2 + o(1)) (k + 1) ^ 2 n [4 n p (1-p)]^{k/2}
\end{equation*}
on the contribution of all closed walks $\Gamma$ of length $k$, which is just the expected trace. As claimed in the statement of \Cref{actual_main_result}, this is no more than $P(n) [4 d (n - d) / n]^{k/2}$ for some polynomial $P(n)$ --- for example, $P(n) = n^2$ would work, since $(k + 1)^2 \ll n$.
\end{proof}

Thus, \Cref{actual_main_result} is proven, thereby proving \Cref{main_theorem} as well.

\section{Acknowledgements}
The author wishes to thank Michael Krivelevich for his guidance and support. He is also very grateful to Eden Kuperwasser and Yotam Shomroni for their helpful comments and suggestions.

\setcounter{biburllcpenalty}{7000}
\setcounter{biburlucpenalty}{8000}

\printbibliography

\appendix
\section{Estimating the Fixed Points of the Operators}
\label{first_order_estimate}

We present here the full proof of \Cref{p_estimate}. For brevity, given $\mathbf{d}$ and $\mathcal{A}$, we write $d_a$ for $\mathbf{d}(a)$ and $n_a$ for $|\mathcal{A}(a)|$. We need to prove the following:

\begin{proposition}
\begin{equation}
\label{p_estimate_plugin}
    \tilde{P}_{\mathbf{d}, \mathcal{A}} (ab) = \frac {(n - 1) d_a d_b} {d n_a n_b} \left(1 + \mathcal{O}\left( \frac {t^2} {n d} \right) \right) ,
\end{equation}
\begin{equation}
\label{y_estimate_plugin}
    \tilde{Y}_{\mathbf{d}, \mathcal{A}} (abc) = \frac {(n - 1)^2 d_a d_b (d_b - 1) d_c} {d^2 n_a n_b (n_b - 1) n_c} \left(1 + \mathcal{O}\left( \frac {t^2} {n d} \right) \right) ,
\end{equation}
constitute a fixed point of the operators $\mathcal{P}, \mathcal{Y}$.
\end{proposition}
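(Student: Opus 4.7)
The plan is to verify, by direct substitution, that the stated expressions for $\tilde{P}$ and $\tilde{Y}$ are approximate fixed points of the operators $\mathcal{P}$ and $\mathcal{Y}$, with relative error at most $\mathcal{O}(t^2/(nd))$. As outlined in the remarks preceding the claim, the conclusion of \Cref{p_estimate} then follows by combining this fixed-point property with the contractive estimate \Cref{good_estimate}: repeated application of $(\mathcal{P}, \mathcal{Y})$ drags any two reasonable estimates together, so any approximate fixed point must lie within the stated error of the true fixed point $(P, Y)$. The argument is a generalization of the calculation Liebenau and Wormald carry out in Appendix A of \cite{liebenauwormald2017} for the case $\mathcal{A} = \binom{[n]}{2}$; the new feature here is handling $\mathcal{A}$ missing up to $t$ edges and $\mathbf{d} \in B_t(n, d)$.

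First I would estimate $\tilde{\mathcal{B}}(ab) := \mathcal{B}(\tilde{P}, \tilde{Y})_{\mathbf{d}, \mathcal{A}}(ab)$ using \Cref{b_operator}. The first sum, over $c \in \mathcal{A}(a) \setminus \mathcal{A}(b)$, has at most $t + 1$ terms — one for $c = b$, together with at most $t$ terms corresponding to $\mathcal{A}$-missing edges at $b$ — each of size $\mathcal{O}(d/n)$, contributing in total at most $\mathcal{O}(td/n)$. The second sum is the dominant contribution: using the factorization $\tilde{Y}(acb) = \tilde{P}(ac) \cdot (n-1)(d_c - 1)d_b / (d(n_c - 1)n_b)$ and the global identities $\sum_c d_c = nd - \mathcal{O}(t)$, $|n_c - (n-1)| \leq t$, $\sum_c d_c^2 \leq nd^2$, one obtains $\tilde{\mathcal{B}}(ab) = d_a d_b / n + \mathcal{O}(td^2/n^2)$, with an explicit subleading form sharp enough for what follows.

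The heart of the argument is then to verify that $\mathcal{P}(\tilde{P}, \tilde{Y})(ab)$ matches $\tilde{P}(ab)$ up to relative error $\mathcal{O}(t^2/(nd))$. Substituting $\tilde{\mathcal{B}}$ into \Cref{p_operator}, this reduces to establishing
\[
\sum_{c \in \mathcal{A}^*(b)} \frac{d_c - \tilde{\mathcal{B}}_{\mathbf{d} - \mathbf{e}_a - \mathbf{e}_b}(ca)}{d_a - \tilde{\mathcal{B}}_{\mathbf{d} - \mathbf{e}_b - \mathbf{e}_c}(ac)} \cdot \frac{1 - \tilde{P}_{\mathbf{d} - \mathbf{e}_b - \mathbf{e}_c}(bc)}{1 - \tilde{P}_{\mathbf{d} - \mathbf{e}_a - \mathbf{e}_b}(ab)} = \frac{d\, n_a n_b}{(n-1)\, d_a} \left(1 + \mathcal{O}\!\left(\frac{t^2}{nd}\right)\right),
\]
since then inverting and multiplying by $d_b$ returns exactly $\tilde{P}(ab)$. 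The shifts $\mathbf{d} - \mathbf{e}_a - \mathbf{e}_b$ and $\mathbf{d} - \mathbf{e}_b - \mathbf{e}_c$ change the relevant quantities only by $\mathcal{O}(1/d)$ relative corrections that are absorbed into the final error. The leading summand behaviour is $\sum_c d_c/d_a \approx nd/d_a$; the task is to show that the corrections from $\tilde{\mathcal{B}}$, from the $(1 - \tilde{P})$ ratio, and from restricting the sum to $\mathcal{A}^*(b)$ (rather than all of $[n] \setminus \{b\}$) combine to give precisely the factor $n_a n_b / (n(n-1))$.

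The analogous check for $\mathcal{Y}$ is more direct: the formula in \Cref{y_operator}, combined with the just-established fixed-point property of $\mathcal{P}$, reduces to the algebraic identity $\tilde{P}(ab) \cdot \tilde{P}_{\mathbf{d} - \mathbf{e}_a - \mathbf{e}_b, \mathcal{A} \setminus \{a,b\}}(bc) = \tilde{Y}(abc)$, which one verifies directly from the explicit expressions. The main obstacle throughout will be keeping the error tight at $\mathcal{O}(t^2/(nd))$ rather than the naive $\mathcal{O}(t/n)$ that a sloppy accounting would produce. The $\Theta(n)$-term sum in $\mathcal{P}$ amplifies per-term errors, so delicate cancellations are required: the precise form of $\tilde{P}$ — particularly the $n_a n_b$ in its denominator — is tuned to balance the restriction of the $c$-sum to $\mathcal{A}^*(b)$ and the shape of the $\tilde{\mathcal{B}}$ correction. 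A less carefully chosen candidate (e.g.\ $d_a d_b/(nd)$) would yield a much larger fixed-point error.
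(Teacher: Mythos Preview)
Your overall strategy---direct substitution into $\mathcal{P}$ and $\mathcal{Y}$ and verification that the output matches the input up to $\mathcal{O}(t^2/(nd))$---is exactly what the paper does. But there is a genuine gap in your treatment of $\tilde{\mathcal{B}}$, and it is precisely the ``sloppy accounting'' you warn against at the end.

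You bound the first sum in \Cref{b_operator}, over $c \in \mathcal{A}(a)\setminus\mathcal{A}(b)$, by $\mathcal{O}(td/n)$ and absorb it as error, writing $\tilde{\mathcal{B}}(ab) = d_a d_b/n + \mathcal{O}(td^2/n^2)$. This is too coarse. In the numerator of the $\mathcal{P}$-summand you need $\tilde{\mathcal{B}}_{\mathbf{d}-\mathbf{e}_a-\mathbf{e}_b}(ca)$ for generic $c$; here the first sum runs over $\mathcal{A}(c)\setminus\mathcal{A}(a)$, which for generic $c$ has exactly $n - n_a$ elements, contributing $(n-n_a)\cdot d/(n-1)$ to leading order. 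This term, of size $\mathcal{O}(td/n)$, is what inserts the $n_a$-dependence into the summand: the paper obtains
\[
d_c - \tilde{\mathcal{B}}(ca) = \frac{d}{n-1}(n_a+1-d_a) + \mathcal{O}(t^2/n),
\]
whereas your version $d_a d_b/n$ gives only $d_c(n-d_a)/n$, with no $n_a$. The resulting summand is $\frac{d n_a}{d_a(n-1)}$ in the paper versus $\frac{d(n-d_a)}{d_a(n-d)}$ in yours; these differ by a factor $1+\mathcal{O}(t/n)$, not $1+\mathcal{O}(t^2/(nd))$. After summing over $c\in\mathcal{A}^*(b)$ and inverting, your computation would reproduce the $n_b$ in the denominator of $\tilde{P}$ (from the $n_b$ terms in the sum) but not the $n_a$. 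So the fixed-point check fails at the required precision---you get relative error $\mathcal{O}(t/n)$, which exceeds $\mathcal{O}(t^2/(nd))$ by a factor of $d/t \gg 1$.

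The fix is to track the first sum in $\mathcal{B}$ explicitly as $|\mathcal{A}(a)\setminus\mathcal{A}(b)|\cdot d_a/n_a$ rather than as error, exactly as the paper does. Your remark that the $n_a n_b$ in $\tilde{P}$ is ``tuned to balance \ldots\ the shape of the $\tilde{\mathcal{B}}$ correction'' is right in spirit, but the correction you actually wrote down carries no $n_a$ or $n_b$, so nothing is there to balance.
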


\begin{proof}
We begin by calculating $\mathcal{B}(\tilde{P}, \tilde{Y})$, as defined in \Cref{b_operator}. It is composed of two sums, and we begin by considering
\begin{equation*}
    \sum\limits_{c \in \mathcal{A}(a) \cap \mathcal{A}(b)} Y_{\mathbf{d}, \mathcal{A}} (acb) .
\end{equation*}
Note that each summand is $p^2 (1 + \mathcal{O}(t / d))$, and at most $2t$ vertices will have either $d_c < d$ or $n_c < n-1$. Therefore, we may freely assume that the sum does not involve any such $c$, as they will only add a negligible error.

Plugging in \Cref{y_estimate_plugin}, we therefore get that the sum is equal to the number of terms $|\mathcal{A}(a) \cap \mathcal{A}(b)|$, times $Y_{\mathbf{d}, \mathcal{A}} (acb)$ for any vertex $c$ with $d_c = d, n_c = n - 1$. This yields an estimate of 
\begin{equation*}
    |\mathcal{A}(a) \cap \mathcal{A}(b)| \cdot \frac{(n-1)(d-1) d_a d_b} {(n-2)d n_a n_b} + \mathcal{O} \left( \frac {d t^2} {n^2} \right) .
\end{equation*}
A similar argument shows that the other sum is
\begin{equation*}
    |\mathcal{A}(a) \setminus \mathcal{A}(b)| \cdot \frac {d_a} {n_a} + \mathcal{O} \left( \frac {t^3} {n^2} \right) .
\end{equation*}

This gives us an estimate on $\mathcal{B}(\tilde{P}, \tilde{Y})$, which we are now ready to plug into the various summands of \Cref{p_operator},
\begin{equation*}
    \frac {\mathbf{d}(c) - B_{\mathbf{d} - \mathbf{e}_a - \mathbf{e}_b, \mathcal{A}} (ca)} {\mathbf{d}(a) - B_{\mathbf{d} - \mathbf{e}_b - \mathbf{e}_c, \mathcal{A}} (ac)} \frac {1 - P_{\mathbf{d} - \mathbf{e}_b - \mathbf{e}_c, \mathcal{A}}(bc)} {1 - P_{\mathbf{d} - \mathbf{e}_a - \mathbf{e}_b, \mathcal{A}} (ab)} .
\end{equation*}

For start, they are always $1 + \mathcal{O}(t / d)$, so we can similarly assume that $d_c = d, n_c = n - 1$, and $c \in \mathcal{A}(a)$. In that case, the numerator has
\begin{equation*}
\begin{split}
    \mathbf{d}(c) - B_{\mathbf{d} - \mathbf{e}_a - \mathbf{e}_b, \mathcal{A}} (ca) & = d - (n_a - 1) \frac {(d - 1) (d_a - 1)} {(n - 2) n_a} - (n - n_a) \frac {d} {n - 1} + \mathcal{O} \left( \frac {d t^2} {n^2} \right) \\
    & = d - \frac {(d_a - 2) d} {n - 1} - \frac {(n - n_a) d} {n - 1} + \mathcal{O} \left( \frac {t^2} {n} \right) \\
    & = \frac {d} {n-1} (n_a + 1 - d_a) + \mathcal{O} \left( \frac {t^2} {n} \right) .
\end{split}
\end{equation*}
For the denominator, we similarly get
\begin{equation*}
    \mathbf{d}(a) - B_{\mathbf{d} - \mathbf{e}_b - \mathbf{e}_c, \mathcal{A}} (ac) = \frac {d_a} {n - 1} (n - d) + \mathcal{O} \left( \frac {t^2} {n} \right) .
\end{equation*}
Putting together these two equations, we get
\begin{equation*}
    \frac {\mathbf{d}(c) - B_{\mathbf{d} - \mathbf{e}_a - \mathbf{e}_b, \mathcal{A}} (ca)} {\mathbf{d}(a) - B_{\mathbf{d} - \mathbf{e}_b - \mathbf{e}_c, \mathcal{A}} (ac)} = \frac {d (n_a + 1 - d_a)} {d_a (n - d)} + \mathcal{O} \left( \frac {t^2} {n d} \right) = \frac {d (n_a - d_a)} {d_a (n - 1 - d)} + \mathcal{O} \left( \frac {t^2} {n d} \right) .
\end{equation*}

We now need to multiply this by $(1 - \tilde {P}_{\mathbf{d} - \mathbf{e}_b - \mathbf{e}_c, \mathcal{A}}(bc)) / (1 - \tilde  {P}_{\mathbf{d} - \mathbf{e}_a - \mathbf{e}_b, \mathcal{A}} (ab))$. To simplify, we will first consider the terms not involving $a$, and then those involving them.

The part without $a$ is
\begin{equation*}
\begin{split}
    \frac {d} {n - 1 - d} (1 - \tilde {P}_{\mathbf{d} - \mathbf{e}_b - \mathbf{e}_c} (bc)) & = \frac {d} {n - 1 - d} \left( 1 - \frac {(n-1) (d_b - 1) (d - 1)} {d n_b (n - 1)} \right) + \mathcal{O} \left( \frac {t^2} {n^2} \right) \\
    & = \frac {d} {n - 1} \left(1 + \frac {d^2 n_b - d (n - 1) (d_b - 2)} {d n_b (n - 1 - d)} \right) + \mathcal{O} \left( \frac {t^2} {n^2} \right) ,
\end{split}
\end{equation*}
as can be shown by a straightforward calculation. The remaining terms are analogous,
\begin{equation*}
    \left( \frac {d_a} {n_a - 1 - d_a} (1 - \tilde {P}_{\mathbf{d} - \mathbf{e}_a - \mathbf{e}_b} (ab)) \right) ^ {-1} = \left( \frac {d_a} {n_a} \left(1 + \frac {d_a d n_b - d_a (n - 1) (d_b - 2)} {d n_b (n_a - d_a)} \right) + \mathcal{O} \left( \frac {t^2} {n^2} \right) \right) ^ {-1} .
\end{equation*}
The ratio between the inner terms is
\begin{equation*}
    \left(1 + \frac {d_a d n_b - d_a (n - 1) (d_b - 2)} {d n_b (n_a - d_a)} \right) / \left(1 + \frac {d^2 n_b - d (n - 1) (d_b - 2)} {d n_b (n - 1 - d)} \right) = 1 + \mathcal{O} \left( \frac {t^2} {n d} \right),
\end{equation*}
so the entire summand turns out to be 
\begin{equation*}
    \frac {d n_a} {d_a (n - 1)} \left(1 + \mathcal{O} \left( \frac {t^2} {n d} \right) \right) .
\end{equation*}

This is now summed over the $n_b$ vertices in $A^{*}(b)$. Finally, taking the inverse and multiplying by $d_b$, we get the required estimate
\begin{equation*}
    \frac {(n-1) d_a d_b} {d n_a n_b} \left(1 + \mathcal{O} \left( \frac {t^2} {n d} \right) \right) ,
\end{equation*}
which is precisely what we had in \Cref{p_estimate_plugin} to begin with.

We still need to plug $\tilde{P}, \tilde{Y}$ into $\mathcal{Y}$ to show that this estimate is a fixed point for this operator as well. We begin by observing that $\tilde{Y}_{\mathbf{d} - \mathbf{e}_a - \mathbf{e}_b, \mathcal{A}} (abc)$ can be written as a product of $P$ terms as $\tilde{P}_{\mathbf{d} - \mathbf{e}_a - \mathbf{e}_b, \mathcal{A}} (bc) \tilde{P}_{\mathbf{d} - 2\mathbf{e}_a - 2\mathbf{e}_b, \mathcal{A}} (ab) \left(1 + 1 / n + \mathcal{O}(t / n^2) \right)$.

Plugging this into the definition of $\mathcal{Y}$ from \Cref{y_operator}, we get
\begin{equation*}
    \mathcal{Y}(\tilde{P}, \tilde{Y})_{\mathbf{d}, \mathcal{A}} (abc) = \tilde{P}_{\mathbf{d}, \mathcal{A}} (ab) \tilde{P}_{\mathbf{d} - \mathbf{e}_a - \mathbf{e}_b, \mathcal{A}} (bc) \frac {1 - \tilde{P}_{\mathbf{d} - \mathbf{e}_a - 2\mathbf{e}_b, \mathcal{A}} (ab) (1 + 1 / n + \mathcal{O}(t^2 / n))} {1 - \tilde{P}_{\mathbf{d} - \mathbf{e}_a - \mathbf{e}_b, \mathcal{A}} (ab)} ,
\end{equation*}
so we just need to prove that
\begin{equation*}
    \frac {1 - \tilde{P}_{\mathbf{d} - \mathbf{e}_a - 2\mathbf{e}_b, \mathcal{A}} (ab) (1 + 1 / n)} {1 - \tilde{P}_{\mathbf{d} - \mathbf{e}_a - \mathbf{e}_b, \mathcal{A}} (ab)} = 1 + \frac {1} {n} + \mathcal{O} \left( \frac {t^2} {n d} \right).
\end{equation*}
This can be verified by a straightforward calculation. Indeed, plugging in our estimates $\tilde{P}$ we get
\begin{align*}
    \frac {d n_a n_b - n (d_a - 2) (d_b - 1)} {d n_a n_b - (n - 1) (d_a - 1) (d_b - 1)} + \mathcal{O} \left( \frac {t^2} {n d} \right) & = 1 + \frac {n d_b - d_a d_b} {d n_a n_b - (n - 1) (d_a - 1) (d_b - 1)} + \mathcal{O} \left( \frac {t^2} {n d} \right) \\
    & = 1 + \frac {d (n - d)} {d (n - d) n} + \mathcal{O} \left( \frac {t^2} {n d} \right) = 1 + \frac {1} {n} + \mathcal{O} \left( \frac {t^2} {n d} \right) . \qedhere
\end{align*}
\end{proof}

\end{document}